\DeclareMathOperator{\sHom}{\mathscr{H}\text{\kern -3pt {\calligra\large om}}\,}
\DeclareMathOperator{\sRHom}{\mathscr{RH}\text{\kern -3pt {\calligra\large om}}\,}
\DeclareMathOperator{\sQuot}{\mathscr{Q}\text{\kern -3pt {\calligra\large uot}}\,}
\patchcmd{\@settitle}{\uppercasenonmath\@title}{}{}{}
\newcommand{\nc}{\newcommand}
\newtheorem{theorem}[subsection]{Theorem}
\newtheorem{proposition}[subsection]{Proposition}
\newtheorem{lemma}[subsection]{Lemma}
\newtheorem{corollary}[subsection]{Corollary}
\newtheorem{conjecture}[subsection]{Conjecture}
\newtheorem{definition}[subsection]{Definition}
\newtheorem{claim}[subsection]{Claim}
\newtheorem{remark}[subsection]{Remark}
\nc{\fa}{{\mathfrak{a}}}
\nc{\fb}{{\mathfrak{b}}}
\nc{\fg}{{\mathfrak{g}}}
\nc{\fh}{{\mathfrak{h}}}
\nc{\fj}{{\mathfrak{j}}}
\nc{\fn}{{\mathfrak{n}}}
\nc{\fm}{{\mathfrak{m}}}
\nc{\fu}{{\mathfrak{u}}}
\nc{\fp}{{\mathfrak{p}}}
\nc{\fr}{{\mathfrak{r}}}
\nc{\ft}{{\mathfrak{t}}}
\nc{\fsl}{{\mathfrak{sl}}}
\nc{\fgl}{{\mathfrak{gl}}}
\nc{\hsl}{{\widehat{\mathfrak{sl}}}}
\nc{\hgl}{{\widehat{\mathfrak{gl}}}}
\nc{\hg}{{\widehat{\mathfrak{g}}}}
\nc{\chg}{{\widehat{\mathfrak{g}}}{}^\vee}
\nc{\hn}{{\widehat{\mathfrak{n}}}}
\nc{\chn}{{\widehat{\mathfrak{n}}}{}^\vee}
\nc{\Mod}{{\textrm{Mod}}}
\nc{\wGL}{{\widehat{GL}^+}}
\nc{\BA}{{\mathbb{A}}}
\nc{\BC}{{\mathbb{C}}}
\nc{\BG}{{\mathbb{G}}}
\nc{\BM}{{\mathbb{M}}}
\nc{\BK}{{\mathbb{K}}}
\nc{\BN}{{\mathbb{N}}}
\nc{\BF}{{\mathbb{F}}}
\nc{\BH}{{\mathbb{H}}}
\nc{\BP}{{\mathbb{P}}}
\nc{\BQ}{{\mathbb{Q}}}
\nc{\BR}{{\mathbb{R}}}
\nc{\BZ}{{\mathbb{Z}}}
\nc{\ff}{{\mathbb{F}}}
\nc{\kk}{{\mathbb{K}}}
\nc{\kko}{{\mathbb{K}}}
\nc{\coh}{{\text{Coh}}}
\nc{\perf}{{\text{Perf}}}
\nc{\op}{{\text{op}}}
\nc{\ecoh}{{\emph{Coh}}}
\nc{\eperf}{{\emph{Perf}}}
\nc{\CA}{{\mathcal{A}}}
\nc{\CC}{{\mathcal{C}}}
\nc{\CB}{{\mathcal{B}}}
\nc{\DD}{{\mathcal{D}}}
\nc{\CE}{{\mathcal{E}}}
\nc{\CF}{{\mathcal{F}}}
\nc{\tCF}{{\widetilde{\CF}}}
\nc{\tCM}{{\widetilde{\CM}}}
\nc{\tCT}{{\widetilde{\CT}}}
\nc{\oCF}{{\bar{\CF}}}
\nc{\CG}{{\mathcal{G}}}
\nc{\CL}{{\mathcal{L}}}
\nc{\CK}{{\mathcal{K}}}
\nc{\CI}{{\mathcal{I}}}
\nc{\CM}{{\mathcal{M}}}
\nc{\CH}{{\mathcal{H}}}
\nc{\CN}{{\mathcal{N}}}
\nc{\CO}{{\mathcal{O}}}
\nc{\CP}{{\mathcal{P}}}
\nc{\CR}{{\mathcal{R}}}
\nc{\CQ}{{\mathcal{Q}}}
\nc{\CS}{{\mathcal{S}}}
\nc{\CT}{{\mathcal{T}}}
\nc{\tCU}{{\widetilde{\CU}}}
\nc{\CU}{{\mathcal{U}}}
\nc{\CV}{{\mathcal{V}}}
\nc{\CW}{{\mathcal{W}}}
\nc{\tpsi}{{\widetilde{\Psi}}}
\nc{\wpi}{{\widetilde{\pi}}}
\nc{\Ker}{{\text{Ker }}}
\nc{\Coker}{{\text{Coker }}}
\nc{\CX}{{\mathcal{X}}}
\nc{\tCX}{{\widetilde{\mathcal{X}}}}
\nc{\CY}{{\mathcal{Y}}}
\nc{\tCY}{{\widetilde{\mathcal{Y}}}}
\nc{\tN}{{\widetilde{\CN}}}
\nc{\pN}{{\BP\widetilde{\CN}}}
\nc{\tT}{{T}}
\nc{\fC}{{\mathfrak{C}}}
\nc{\fY}{{\mathfrak{Y}}}
\nc{\fZ}{{\mathfrak{Z}}}
\nc{\fU}{{\mathfrak{U}}}
\nc{\fV}{{\mathfrak{V}}}
\nc{\fW}{{\mathfrak{W}}}
\nc{\fS}{{\mathfrak{S}}}
\nc{\bfY}{{\bar{\fY}}}
\nc{\bfZ}{{\bar{\fZ}}}
\nc{\od}{{\overline{d}}}
\nc{\rg}{{\textrm{R}\Gamma}}
\nc{\erg}{{\emph{R}\Gamma}}
\nc{\id}{{\textrm{Id}}}
\nc{\rhom}{{\textrm{RHom}}}
\def\Tor{\mathscr{T}or}
\def\Ext{\textrm{Ext}}
\def\Hom{\textrm{Hom}}
\def\e{\varepsilon}
\def\and{\textrm{ }\&\textrm{ }}
\def\sym{\textrm{Sym}}
\def\red{\text{red}}
\def\ered{\emph{red}}
\def\tCF{\widetilde{\CF}}
\def\te{\widetilde{e}}
\def\tf{\widetilde{f}}
\def\loccit{\emph{loc. cit. }}
\def\loccitt{\emph{loc. cit.}}
\def\Tan{\text{Tan}}
\def\eTan{\emph{Tan}}
\def\km{K_\CM}
\def\kms{K_{\CM \times S}}
\def\kmss{K_{\CM \times S \times S}}
\def\kmsss{K_{\CM \times S \times S \times S}}
\def\ks{K_S}
\def\kss{K_{S \times S}}
\def\fr{\text{Frac }\BF}
\def\split{\text{split}}
\def\stack{\text{Stack}}
\def\estack{\emph{Stack}}
\def\comm{\text{Comm}}
\def\ecomm{\emph{Comm}}
\def\quot{\text{Quot}}
\def\equot{\emph{Quot}}
\def\tor{\text{Tor}}
\def\End{{\text{End}}}
\def\taut{{\text{taut}}}
\def\ccur{{\underline{\curvearrowright}}}
\def\Maps{{\text{Maps}}}
\def\const{{\text{const}}}
\def\econst{{\emph{const}}}
\def\pr{{\text{pr}}}
\def\first{{\text{first}}}
\def\efirst{{\emph{first}}}
\def\last{{\text{last}}}
\def\elast{{\emph{last}}}
\def\defect{{\text{def }}}
\def\edefect{{\emph{def }}}
\def\te{{\widetilde{e}}}
\def\qis{{\stackrel{\text{q.i.s.}}\cong}}
\patchcmd{\epigraph}{\@epitext{#1}}{\itshape\@epitext{#1}}{}{}
\def\Hilb{{\text{Hilb}}}
\begin{document}
	
	\title[Hecke correspondences for smooth moduli spaces of sheaves]{\large{\textbf{Hecke correspondences for smooth moduli spaces of sheaves}}}
	\author[Andrei Negu\cb t]{Andrei Negu\cb t}
	\address{MIT, Department of Mathematics, Cambridge, MA, USA}
	\address{Simion Stoilow Institute of Mathematics, Bucharest, Romania}
	\email{andrei.negut@gmail.com}
	
	\maketitle
	
	\renewcommand{\thefootnote}{\fnsymbol{footnote}} 
	\footnotetext{\emph{2010 Mathematics Subject Classification: } 14D20, 14J60}     
	\renewcommand{\thefootnote}{\arabic{footnote}} 
	
	\begin{abstract} We define functors on the derived category of the moduli space $\CM$ of stable sheaves on a smooth projective surface (under Assumptions A and S below), and prove that these functors satisfy certain commutation relations. These relations allow us to prove that the given functors induce an action of the elliptic Hall algebra on the $K$--theory of the moduli space $\CM$, thus generalizing the action studied by Nakajima, Grojnowski and Baranovsky in cohomology. 
		
		\text{} \newline
		
	\end{abstract}
	
	\epigraph{\epigraphsize Lunei, pentru cinci luni minunate}
	
	\section{Introduction} 
	\label{sec:introduction}
	
\medskip

\subsection{} 
\label{sub:heis intro}

Let $S$ be a smooth projective surface over an algebraically closed field of characteristic 0 (henceforth denoted by $\BC$). An important family of moduli spaces associated to $S$ are the Hilbert schemes of points on $S$, which we will denote by $\Hilb_n(S)$: these are smooth $2n$-dimensional algebraic varieties, which parameterize colength $n$ ideal sheaves $\CI \subset \CO_S$. The Betti numbers of Hilbert schemes were computed in \cite{ES, Go}, and this lead to a very interesting observation: the generating series of the Betti numbers of $\Hilb_n(S)$, as $n$ goes from 0 to $\infty$, matches the Poincar\'e polynomial of (an appropriately graded version of) the Fock space of the infinite-dimensional Heisenberg algebra. Coupled with expectations from mathematical physics, the way to understand this interesting connection is to consider all Hilbert schemes together:
$$
\Hilb(S) = \bigsqcup_{n = 0}^{\infty} \Hilb_n(S)
$$
and construct an action
\begin{equation}
\label{eqn:heis intro}
\text{Heisenberg algebra} \curvearrowright H^*(\Hilb(S)) = \bigoplus_{n=0}^{\infty} H^*(\Hilb_n(S))
\end{equation}
This was achieved by Grojnowski in \cite{G} and Nakajima in \cite{Nak}. We will mostly use the formulation of \emph{loc. cit.}, in which the generators $\{a_{\pm n}\}_{n \in \mathbb{N}}$ of the Heisenberg algebra act on $H^*(\Hilb(S))$ by the so-called \textbf{Hecke correspondences}
\begin{equation}
\label{eqn:corr intro}
\fC_n^\bullet = \Big\{(\CI' \subset_{nx} \CI)\Big\}
\end{equation}
where $\CI' \subset_{nx} \CI$ means that the ideal sheaves $\CI, \CI' \subset \CO_S$ are contained inside each other, and their quotient is a length $n$ sheaf supported at a single (but arbitrary) closed point $x \in S$. Nakajima considered the projection maps

	$$
	\xymatrix{& \fC_n^\bullet \ar[ld]_{p_+} \ar[d]^{p_S} \ar[rd]^{p_-} & \\
		\Hilb(S) & S & \Hilb(S)} \qquad \qquad \xymatrix{& \CI' \subset_{nx} \CI \ar@{|->}[ld] \ar@{|->}[d] \ar@{|->}[rd] & \\
		\CI' & x & \CI}
	$$
and defined	
\begin{equation}
\label{eqn:ops intro}
a_{\pm n} = (p_{\mp} \times p_S)_* \circ p_{\pm}^* : H^*(\Hilb(S)) \rightarrow H^*(\Hilb(S)) \otimes H^*(S)
\end{equation}
The main result of \cite{Nak} is that the operators above satisfy the defining relations of the Heisenberg algebra, with $H^*(S)$ as a ``parameter space". Under this action, $H^*(\Hilb(S))$ is isomorphic to the Fock space of the Heisenberg algebra, which explains in a representation theoretic way the classical formulas for the Betti numbers. \\

\subsection{} 
\label{sub:k intro}

After constructing the action \eqref{eqn:heis intro}, one might wonder how to generalize it. One direction would be to replace singular cohomology by other homology theories. While the construction extends almost verbatim to Chow groups, in algebraic $K$--theory things are not so simple. The reason for this is that the correspondences $\fC_n^\bullet$ are very badly behaved for $n>1$ (one does not even know if they are equidimensional in general) so the initial question is to find suitable replacements.
\begin{align*}
\textbf{Problem:} &\text{ define analogues of the operators \eqref{eqn:ops intro} in }K\text{--theory, which} \\ 
&\text{ satisfy the defining relations of the deformed Heisenberg algebra}
\end{align*}
One approach toward this problem was undertaken in \cite{FT, SV} for $S = \BA^2$, where the authors used equivariant localization computations to construct an action
\begin{equation}
\label{eqn:a intro}
\CA \curvearrowright K_{\Hilb(S)} = \bigoplus_{n=0}^{\infty} K_{\Hilb_n(S))}
\end{equation}
(see Subsection \ref{sub:k-theory} for our conventions on $K$--theory). In the elliptic Hall algebra formulation of \cite{SV}, the algebra $\CA$ is generated by elements $\{e_{\pm n,k}\}_{(n,k) \in \mathbb{N} \times \mathbb{Z}}$, with the subalgebra generated by the $e_{\pm n,0}$'s isomorphic to the deformed Heisenberg algebra, thus explaining our interest in constructing an action \eqref{eqn:a intro}. In \cite{Mod}, we realized the operators $e_{\pm n,k}$ using certain $K$-theory classes on the correspondence
\begin{equation}
\label{eqn:z intro}
\fZ_n^\bullet = \Big\{(\CI_0 \subset_{x} \CI_1 \subset_x \dots \subset_x \CI_n)\Big\}
\end{equation}
which can thus be thought of as a resolution of $\fC_n^\bullet$ of \eqref{eqn:corr intro}. While \emph{loc. cit.} only proved results for $S = \BA^2$, the correspondence \eqref{eqn:z intro} is well-defined for any smooth surface $S$, and we are thus poised to solve the \textbf{Problem} stated above. Even more so, the operators $e_{\pm n,k}$ naturally lift to the derived category, and are thus related to various categorifications of Hecke algebras (see \cite{GNR} for an overview). \\
	
	\subsection{} 
	\label{sub:main intro}
	
	Another direction for generalizing \eqref{eqn:heis intro} is to replace Hilbert schemes by other moduli spaces of coherent sheaves on $S$. An important example is given by moduli spaces of stable sheaves (see Subsection \ref{sub:first mod} for more details), in which the analogue of the action \eqref{eqn:heis intro} was worked out in \cite{Ba}. The main purpose of the present paper is to construct an action \eqref{eqn:a intro} in the aforementioned setup of stable sheaves. Thus, let $S$ be a smooth projective surface with an ample divisor $H$, and also fix $(r,c_1) \in \BN \times H^2(S, \BZ)$. Consider the moduli space $\CM$ of $H$--stable sheaves on the surface $S$ with the numerical invariants $r,c_1$ and any $c_2 \in \BZ$. We make the following two assumptions throughout the present paper:
	\begin{align}
	&\textbf{Assumption A:} \qquad  \gcd(r, c_1 \cdot H) = 1 \label{eqn:assumption a} \\
	&\textbf{Assumption S:} \qquad \begin{cases} \text{either } \omega_S \cong \CO_S, \\ \text{or } c_1(\omega_S) \cdot H < 0 \end{cases} \label{eqn:assumption s}
	\end{align}
	Assumption A implies that $\CM$ is representable, i.e. there exists a universal sheaf $\CU$ on $\CM \times S$. Assumption S implies that $\CM$ is smooth, which allows us to define
	$$
	D_\CM = D^b(\coh(\CM)) = \perf(\CM)
	$$
In the following formulas, we will denote by $\CF$ the coherent sheaves on $S$ that are parameterized by the moduli space $\CM$. With this in mind, let us recall the following moduli spaces of flags of sheaves that we studied in \cite{Univ} and \cite{W univ}, respectively:
	\begin{align*}
	&\fZ_1 = \Big\{ (\CF_0 \subset_x \CF_1) \Big\} \\
	&\fZ_2^\bullet = \Big\{ (\CF_0 \subset_x \CF_1 \subset_x \CF_2) \Big\} 
	\end{align*}
	where $\CF' \subset_x \CF$ means that $\CF' \subset \CF$ and $\CF/\CF'$ is the length 1 skyscraper sheaf at the closed point $x \in S$. We recall the scheme structures of $\fZ_1$ and $\fZ_2^\bullet$ in Section \ref{sec:mod}, and in particular, the fact that they are both smooth. This implies that the maps
	$$
	\xymatrix{& \fZ_1 \ar[ld]_{p_+} \ar[d]^{p_S} \ar[rd]^{p_-} & \\
		\CM & S & \CM} \qquad \qquad \qquad  \xymatrix{& \CF_0 \subset_x \CF_1 \ar@{|->}[ld] \ar@{|->}[d] \ar@{|->}[rd] & \\
		\CF_0 & x & \CF_1}
	$$
	$$
	\xymatrix{& \fZ_2^\bullet \ar[ld]_{\pi_+} \ar[rd]^{\pi_-} & \\
		\fZ_1 & & \fZ_1} \qquad \qquad \xymatrix{& \CF_0 \subset_x \CF_1 \subset_x \CF_2 \ar@{|->}[ld] \ar@{|->}[rd] & \\
		\CF_0 \subset_x \CF_1 & & \CF_1 \subset_x \CF_2}
	$$
	induce direct and inverse image functors between the derived categories $D_\CM$, $D_{\fZ_1}$, $D_{\fZ_2^\bullet}$. We may combine these spaces into more complicated diagrams of the form
	$$
	\xymatrix{& \fZ_2^\bullet \ar[ld]_{\pi_+} \ar[rd]^{\pi_-} & & \dots \ar[ld]_{\pi_+} \ar[rd]^{\pi_-} & & \fZ_2^\bullet \ar[ld]_{\pi_+} \ar[rd]^{\pi_-} & & \\
		\fZ_1 \ar[d]_{p_+ \times p_S} & & \fZ_1 & & \fZ_1 & & \fZ_1 \ar[d]^{p_-} \\
		\CM \times S & & & & & & \CM}
	$$
	where the number of copies of $\fZ_1$ contained in the middle row is denoted by $n$. Then for an arbitrary sequence $d_1,\dots,d_n \in \BZ$, we consider the functor
	\begin{equation}
	\label{eqn:def functor}
	\te_{(d_1,\dots,d_n)} : D_{\CM} \rightarrow D_{\CM \times S}
	\end{equation}
	given by the composition
	$$
	\xymatrix{D_{\fZ_1} \ar[d]^{(p_+ \times p_S)_*} & D_{\fZ_1} \ar[l]_{\otimes \CL^{d_1}} & D_{\fZ_1} \ar[l]_{\pi_{+*} \pi_-^*} & D_{\fZ_1} \ar[l]_-{\otimes \CL^{d_2}} \quad \dots \quad D_{\fZ_1} & D_{\fZ_1} \ar[l]_-{\pi_{+*} \pi_-^*} & D_{\fZ_1} \ar[l]_{\otimes \CL^{d_n}} \\
		D_{\CM \times S} & & & & & D_{\CM} \ar[u]^{p_-^*}}
	$$
	where $\CL$ is the line bundle on $\fZ_1$ with fibers $\Gamma(S,\CF_{1}/\CF_{0})$ \footnote{More precisely, consider the universal sheaves $\CU_0 \subset \CU_1$ on $\fZ_1 \times S$; then $\CL$ is defined as the push-forward of the quotient $\CU_1/\CU_0$ to $\fZ_1$ via the first projection.}. Note that $\te_d := \te_{(d)}$ are the derived category versions of the operators $E_{1,d}$ of Subsection \ref{sub:k intro}, but it is crucial to us that $\te_{(d_1,\dots,d_n)}$ is different from the composition $\te_{d_1} \circ \dots \circ \te_{d_n}$ (see Propositions \ref{prop:xx} and \ref{prop:base change excess}). With this in mind, our main result is the following. \\
	
	\begin{theorem}
		\label{thm:main}
		
		For any $n \in \BN$ and any $d_1,\dots,d_n,k \in \BZ$, consider the functors
		\begin{equation}
		\label{eqn:main}
		\te_{(d_1,\dots,d_n)} \circ \te_k \quad \text{and} \quad \te_k \circ \te_{(d_1,\dots,d_n)} : D_{\CM} \rightarrow D_{\CM \times S \times S}
		\end{equation}
of \eqref{eqn:composition 1},\eqref{eqn:composition 2}. There are explicit functors $g_0,\dots,g_n : D_{\CM} \rightarrow D_{\CM \times S \times S}$ such that \\
		
		\begin{enumerate}[leftmargin=*]
			
			\item $g_0 = \te_{(d_1,\dots,d_n)} \circ \te_k$ and $g_n = \te_k \circ \te_{(d_1,\dots,d_n)}$; \\
			
			\item for all $i \in \{1,\dots,n\}$, there exist explicit natural transformations
			\begin{equation}
			\label{eqn:natural transformation}
			\begin{cases} g_{i-1} \rightarrow g_i & \text{ if } d_i > k \\ g_{i-1} \leftarrow g_i & \text{ if } d_i < k \\ g_{i-1} \cong g_i & \text{ if } d_i = k \end{cases}
			\end{equation}
			
			\item for all $i \in \{1,\dots,n\}$, the cone of the natural transformation in the previous item has a filtration with the associated graded object given by the functor
$$
\begin{cases}
\displaystyle \bigoplus_{a=k}^{d_i-1} \Delta_*(\te_{(d_1,\dots,d_{i-1},a,d_i+k-a,d_{i+1},\dots,d_n)}) &\text{if } d_i > k \\
\displaystyle \bigoplus_{a=d_i}^{k-1} \Delta_*(\te_{(d_1,\dots,d_{i-1},a,d_i+k-a,d_{i+1},\dots,d_n)}) &\text{if } d_i < k
\end{cases}
$$			
Above, we abuse notation by writing $\Delta : \CM \times S \rightarrow \CM \times S \times S$ for the identity on $\CM$ times the diagonal embedding $S \hookrightarrow S \times S$. \\
			
		\end{enumerate}
		
	\end{theorem}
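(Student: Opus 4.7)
The strategy is to realize both composites in \eqref{eqn:main} as convolution functors associated to $(n+1)$-step flag spaces of sheaves, and to build the chain $g_0,g_1,\ldots,g_n$ by successively moving the extra modification contributed by $\te_k$ through the $n$ modifications contributed by $\te_{(d_1,\ldots,d_n)}$, one swap at a time. At each swap, the comparison will reduce to a local calculation on two adjacent length-one modifications, which agree on the open locus where their base points are distinct and differ on the diagonal $\Delta(S) \subset S \times S$ by a computable $\BP^1$-bundle pushforward.

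For each $i \in \{0,1,\ldots,n\}$, I would define $g_i$ as the convolution functor associated to the flag space
\[
\fY_i = \Big\{ \CF_0 \subset_{x_1} \CF_1 \subset_{x_2} \cdots \subset_{x_{n+1}} \CF_{n+1} \Big\}
\]
obtained by inserting the extra modification from $\te_k$ at slot $i+1$ of the $n$-modification string defining $\te_{(d_1,\ldots,d_n)}$. The relevant line-bundle twists are $\CL^{d_j}$ at the $j$-th original slot and $\CL^k$ at the distinguished extra slot, and the pushforward to $\CM \times S \times S$ records the appropriate endpoint sheaf, the base point of the distinguished slot (first $S$-factor) and the base point of the outermost slot of the string (second $S$-factor). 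Tracing through the definition \eqref{eqn:def functor} with $\te_k$ applied either before or after $\te_{(d_1,\ldots,d_n)}$ then identifies $g_0$ with $\te_{(d_1,\ldots,d_n)} \circ \te_k$ and $g_n$ with $\te_k \circ \te_{(d_1,\ldots,d_n)}$.

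Next, I would produce the natural transformation between $g_{i-1}$ and $g_i$ from the local swap of the two adjacent modifications in positions $i$ and $i+1$. Off the closed subscheme $\{x_i = x_{i+1}\}$ the two orderings are canonically isomorphic, so the cone of any such natural transformation is supported on this diagonal locus. The transformation itself arises via adjunction along the birational modification relating the two orderings, in direct analogy with the fundamental square $\fZ_2^\bullet \to \fZ_1 \times_\CM \fZ_1$ used to define the $\pi_{+*}\pi_-^*$ steps; the direction of the arrow is determined by the sign of $d_i - k$ and reflects which of the two adjacent line-bundle direct images has no higher cohomology along the relevant $\BP^1$-fibres.

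To compute the cone, I would restrict to the diagonal locus, where the fibre of the iterated flag space becomes (via the structure of $\fZ_2^\bullet$ recalled in Section \ref{sec:mod}) a $\BP^1$-bundle parametrising length-two quotients supported at a single point. The two tautological line bundles of interest restrict on this $\BP^1$-fibre to line bundles whose twist difference encodes the weights $d_i$ and $k$, and the natural swap map corresponds to a Koszul-type short exact sequence on the $\BP^1$. Applying derived pushforward yields a filtration whose associated graded pieces are indexed by $a \in \{k,\ldots,d_i-1\}$ (resp.\ $\{d_i,\ldots,k-1\}$) and take the form $\CL^a \otimes \CL^{d_i+k-a}$; reassembling these into convolution kernels and composing with $\Delta_*$ produces exactly the terms $\Delta_*\te_{(d_1,\ldots,a,d_i+k-a,\ldots,d_n)}$ in the statement. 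The main obstacle will be carrying out this derived pushforward carefully---in particular, identifying the filtration rigorously via a spectral sequence on $\fY_i$ and verifying that no extension classes obstruct the reassembly---which will rely both on the smoothness granted by Assumption S and on the detailed scheme structure of $\fZ_2^\bullet$ along the diagonal.
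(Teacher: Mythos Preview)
Your overall strategy is right---build a chain $g_0,\ldots,g_n$ by sliding the extra modification through the string one step at a time, and identify the cone of each swap with a diagonal contribution. But the mechanism you propose for actually constructing the natural transformation and computing its cone is underspecified and, as written, does not work.

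The central missing ingredient is a common roof for the two adjacent orderings. There is no direct map between $\fZ_{(x,y)}$ and $\fZ_{(y,x)}$, and no ``adjunction along a birational modification'' to invoke until you build one. The paper constructs an explicit scheme $\fY$ parametrizing squares
\[
\CF_0 \subset_{x} \CF_1 \subset_{y} \CF_2,\qquad \CF_0 \subset_{y} \CF_1' \subset_{x} \CF_2
\]
together with projections $\pi^\uparrow:\fY\to\fZ_{(x,y)}$ and $\pi^\downarrow:\fY\to\fZ_{(y,x)}$, and proves (nontrivially, using normality of $\fZ_2$ and reducedness of $\fY$) that $R\pi^\uparrow_*\CO_\fY\cong\CO_{\fZ_{(x,y)}}$ and likewise for $\pi^\downarrow$. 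This lets both $g_{i-1}$ and $g_i$ be realized as correspondences on the \emph{same} space $\fY$, with line bundles $\CL_1^{d_i}\CL_2^k$ and ${\CL_1'}^k{\CL_2'}^{d_i}$ respectively. For general $n$ the paper uses the analogous spaces $\fY_-$, $\fY_+$, $\fY_{-+}$, whose Cohen--Macaulayness and dimension estimates require substantial work (Propositions 2.9, 2.10 and the appendix).

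Once on $\fY$, the natural transformation is not produced by adjunction or by vanishing of higher cohomology along $\BP^1$-fibres, as you suggest. It comes from a concrete map of line bundles $\CL_1\to\CL_2'$ on $\fY$ (induced by $\CF_1/\CF_0\hookrightarrow\CF_2/\CF_0\twoheadrightarrow\CF_2/\CF_1'$), whose zero locus is shown to be exactly $\fZ_{(x,x)}\hookrightarrow\fY$. Using $\CL_1\CL_2=\CL_1'\CL_2'$, one then gets
\[
\CL_1^{d_i}\CL_2^k \longrightarrow {\CL_1'}^k{\CL_2'}^{d_i}\quad\text{if }d_i>k,\qquad \text{and the reverse if }d_i<k,
\]
which is the source of the arrow direction. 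The cone filtration is then a telescoping argument: factor the map through successive single applications of $\CL_1\to\CL_2'$, each of whose cones is the structure sheaf of $\fZ_{(x,x)}$ twisted by $\CL_1^a\CL_2^{d_i+k-a}$. Your ``Koszul-type sequence on $\BP^1$'' picture does not capture this; the filtration lives on $\fY$, not on a projective fibre, and the graded pieces are identified via the vanishing-locus computation for $\CL_1\to\CL_2'$, not via pushforward cohomology.
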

	
	\subsection{}  To gain more insight on the meaning of the categorical relations in (1)--(3) above, let us consider the maps induced by \eqref{eqn:def functor} at the level of $K$--theory groups:
	\begin{equation}
	\label{eqn:def functor k}
	e_{(d_1,\dots,d_n)} : K_{\CM} \rightarrow K_{\CM \times S}
	\end{equation}
	Items (1)--(3) in Theorem \ref{thm:main} imply that these maps satisfy the following relations:
	\begin{equation}
	\label{eqn:new relations}
	[e_{(d_1,\dots,d_n)}, e_k] = \Delta_* \sum_{i=1}^n \begin{cases} - \sum_{k \leq a < d_i} e_{(d_1,\dots,d_{i-1},a,d_i+k-a,d_{i+1},\dots,d_n)}  & \text{if } d_i > k \\ \\ \sum_{d_i \leq a < k} e_{(d_1,\dots,d_{i-1},a,d_i+k-a,d_{i+1},\dots,d_n)}  & \text{if } d_i < k \end{cases}
	\end{equation}
	We show in Section \ref{sec:a infty acts} that these relations are sufficient to conclude that the maps \eqref{eqn:def functor k} induce an action of the elliptic Hall algebra $\CA$ (see \cite{BS, S}, and Section \ref{sec:a infty acts} for a review) on the $K$--theory groups of the moduli space $\CM$. We thus conclude: \\
	
	\begin{corollary}
		\label{cor:main}
		
(see Theorem \ref{thm:a infty acts} for details) There exists an action $\CA \curvearrowright \km$, in the sense of Definition \ref{def:action}. This notion of action is defined so that composing maps $\km \rightarrow \kms$ treats the second factor of $S$ as a ``parameter space". \\
		
	\end{corollary}
	
	\noindent We prove the Corollary only upon tensoring $\km$ with $\BQ$, but we expect it remains true over $\BZ$. The parameters $q_1, q_2$ of the elliptic Hall algebra $\CA$ are such that
	\begin{align*} 
	&q_1+q_2 = [\Omega_S^1] \\ 
	&q = q_1q_2 = [\omega_S]
	\end{align*}
	as elements of $\ks$. Therefore, $\ks$ plays the role of ground ring of the algebra $\CA$. \\
	
	\subsection{} As a consequence of Corollary \ref{cor:main} and the defining relations in $\CA$, the maps
	$$
	\underbrace{e_{(0,\dots,0)}}_{n \text{ zeroes}} : K_{\CM} \rightarrow K_{\CM \times S}
	$$
	satisfy the relations of the deformed Heisenberg algebra. This solves the Problem in Subsection \ref{sub:k intro} in the context of moduli spaces of stable sheaves (strictly speaking, the operators above only give half of the Heisenberg algebra, with the other half provided by the transposed correspondences). It is natural to propose the following. \\
	
	\begin{conjecture}
		\label{conj:main}
		
		For any $n,m \in \BN$, we have
		$$
		\underbrace{\te_{(0,\dots,0)}}_{n \text{ zeroes}} \circ \underbrace{\te_{(0,\dots,0)}}_{m \text{ zeroes}} \cong \underbrace{\te_{(0,\dots,0)}}_{m \text{ zeroes}} \circ \underbrace{\te_{(0,\dots,0)}}_{n \text{ zeroes}}
		$$
		as functors $D_{\CM} \rightarrow D_{\CM \times S \times S}$. \\
		
	\end{conjecture}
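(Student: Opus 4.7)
The plan is to argue by induction on $\min(n,m)$, which we may assume equals $n$. The base case $n=1$ is immediate from Theorem \ref{thm:main}: apply it with the length-$m$ sequence $(d_1,\ldots,d_m)=(0,\ldots,0)$ and single exponent $k=0$. Every index then satisfies $d_i = k = 0$, so each natural transformation in \eqref{eqn:natural transformation} falls into the third case $g_{i-1} \cong g_i$; the associated graded object in the third bullet is even empty, since the summation ranges $0 \le a < 0$ are vacuous. Composing all of these isomorphisms delivers $g_0 \cong g_m$, which is exactly the $(n,m)=(1,m)$ instance of the conjecture.

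For the inductive step $n \ge 2$, the natural strategy is to establish a ``multi-vs.-multi'' generalization of Theorem \ref{thm:main}: for arbitrary tuples $(d_1,\ldots,d_n)$ and $(d_1',\ldots,d_m')$, produce a filtration linking $\te_{(d_1,\ldots,d_n)} \circ \te_{(d_1',\ldots,d_m')}$ to $\te_{(d_1',\ldots,d_m')} \circ \te_{(d_1,\ldots,d_n)}$ whose subquotients involve longer multi-indices pushed forward along partial diagonals in $S \times S$. Such a statement should be accessible by iteratively sliding each entry of $(d_1',\ldots,d_m')$ past $\te_{(d_1,\ldots,d_n)}$ via Theorem \ref{thm:main}, while tracking the resulting cascade of natural transformations and cones. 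In the all-zero case, each single application of Theorem \ref{thm:main} produces only empty cones, so the cascade collapses to a chain of isomorphisms whose composition delivers the conjecture. A more geometric alternative would identify both sides with a push--pull through a common ``doubled'' flag variety parameterizing nested sequences $\CF_0 \subset \cdots \subset \CF_{n+m}$ together with a partition of the successive quotients into a size-$n$ batch and a size-$m$ batch, with the swap of the two batches inducing the desired isomorphism.

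The main obstacle is the coherence of the iteration: one must choose the isomorphisms $g_{i-1} \cong g_i$ at every step so that they assemble into a single global isomorphism of functors rather than merely a chain of local ones. Equivalently, one must control the scheme structure of the non-transverse locus where supporting points of the two batches collide --- precisely the locus responsible for the nontrivial cones in Theorem \ref{thm:main} at generic $d_i, k$. That the $K$--theoretic identity is already known (via Corollary \ref{cor:main} and the commutativity of the Heisenberg generators) makes the conjecture very plausible, but lifting it to an isomorphism of derived functors requires this additional geometric input, which is why the statement is recorded as a conjecture rather than a theorem at this stage of the paper.
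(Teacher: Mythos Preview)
The statement you are addressing is a \emph{conjecture} in the paper, not a theorem: the paper proves only the case $m=1$ (immediately after stating the conjecture, it says ``Theorem \ref{thm:main} proves the $m=1$ case of Conjecture \ref{conj:main}, for arbitrary $n$''). Your base case reproduces exactly this, and the argument you give for it is correct and matches the paper's.

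Your inductive step, however, contains a genuine gap, which you yourself partly identify in your last paragraph. The problem is more basic than ``coherence of the iteration'': the functor $\te_{(0,\ldots,0)}$ with $m$ zeroes is \emph{not} the $m$-fold composition $\te_0 \circ \cdots \circ \te_0$. By definition \eqref{eqn:def functor}, the functor $\te_{(d_1,\ldots,d_m)}$ is built from the correspondences $\fZ_2^\bullet$ (flags with \emph{coincident} support points), whereas composing single $\te_{d_j'}$'s would involve $\fZ_2 = \fZ_{(x,y)}$. In $K$--theory these differ by a correction term, as in \eqref{eqn:eee}, but at the level of derived functors there is no such simple identity. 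Consequently, ``iteratively sliding each entry of $(d_1',\ldots,d_m')$ past $\te_{(d_1,\ldots,d_n)}$ via Theorem \ref{thm:main}'' is not a well-defined operation: Theorem \ref{thm:main} only commutes $\te_{(d_1,\ldots,d_n)}$ with a \emph{single} $\te_k$, and there is no mechanism in the paper for decomposing $\te_{(d_1',\ldots,d_m')}$ into such pieces. This is precisely why the multi-vs-multi statement remains conjectural, and your closing sentence is the honest conclusion.
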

	
	\noindent Theorem \ref{thm:main} proves the $m=1$ case of Conjecture \ref{conj:main}, for any $n$. It would be interesting to compare Conjecture \ref{conj:main} with other categorifications of Heisenberg algebras in the literature. For example, such a categorification was proposed in \cite{CL} when the surface is of ADE type (generalized to all surfaces in \cite{K}), but using the language of derived categories of symmetric powers of the surface $S$. It is thus not easy, but would be very interesting, to compare our results with those of \emph{loc. cit.} \\
	
	\noindent One can generalize Conjecture \ref{conj:main} by recalling that the generators $e_{-n,k} \in \CA$ satisfy
	\begin{equation}
	\label{eqn:am}
	e_{-n,k} = q^{\gcd(n,k)-1} e_{(d_1,\dots,d_n)}, \quad \text{where } d_i = \left \lceil \frac {ki}n \right \rceil - \left \lceil \frac {k(i-1)}n \right \rceil + \delta_i^n - \delta_i^1
	\end{equation}
Corollary \ref{cor:main} implies the following formula, for all $(n,k), (n',k') \in \BN \times \BZ$:
$$
[e_{-n,k}, e_{-n',k'}] = \Delta_*(\text{a certain linear combination of products of } e_{-n'',k''}\text{'s})
$$
where the coefficients of the linear combination match the analogous structure constants in the algebra $\CA$ (see \eqref{eqn:comm} and \eqref{eqn:comm exp}). We expect such formulas to categorify. \\
	
	\begin{conjecture}
	\label{conj:small}
	
	Suppose $(n,k), (n',k') \in \BN \times \BZ$ are such that $kn' - k'n = 1$. Then there exists a natural transformation between the functors
	$$
	D_\CM \xrightarrow{\te_{-n,k} \circ \te_{-n',k'}} D_{\CM \times S \times S} \qquad \text{and} \qquad D_\CM \xrightarrow{\te_{-n',k'} \circ \te_{-n,k}} D_{\CM \times S \times S}
	$$
	(where $\te_{-n,k} = \te_{(d_1,\dots,d_n)}$ with the $d_i$'s as in \eqref{eqn:am}) whose cone is $\Delta_*(\te_{-n-n',k+k'})$. \\
	
	\end{conjecture} 

	\subsection{} Working with $\te_{(d_1,\dots,d_n)} : D_\CM \rightarrow D_{\CM \times S}$ instead of $e_{(d_1,\dots,d_n)} : \km \rightarrow \kms$ reveals new features, such as the presence of categorified knot invariants (namely Khovanov homology). Indeed, following the principles laid out in \cite{GNR}, the natural transformations that appear in Theorem \ref{thm:main} can be interpreted as yielding skein exact triangles between certain objects in the category $\mathscr{C}$ of affine Soergel bimodules (in \cite{GN}, we make this precise by realizing Proposition \ref{prop:zero} in $\mathscr{C}$). \\
	
	\noindent The construction of the functors \eqref{eqn:def functor} was given in terms of derived schemes in \cite{W univ}, and Corollary \ref{cor:main} was conjectured therein. In the present paper, we show that Assumption S implies that many of the spaces featured in \loccit are local complete intersections. Therefore, one can work in classical algebraic geometry rather than derived algebraic geometry. By combining our Corollary \ref{cor:main} with the results of \cite{AGT Univ}, we obtain the identification of the Carlsson-Okounkov operator $\km \rightarrow \km$ with a $W$--algebra intertwiner (see \loccit for definitions). When the surface is $S = \BA^2$ (and $K$--theory is understood equivariantly), this is enough to prove a mathematical incarnation of the  deformed AGT correspondence for $U(r)$ gauge theory with matter, see \cite{W} (inspired by \cite{MO, SV3} and the physics literature). \\
	
	\noindent We observe that Theorem \ref{thm:main} also holds when the moduli space of stable sheaves is replaced by the Hilbert scheme of points on an arbitrary smooth quasiprojective surface (we do not need the surface to be projective, as all the push-forward maps in Subsection \ref{sub:main intro} are proper). The modifications required to make the argument work are minimal, and we leave the details to the interested reader. When the surface $S$ is $K3$, we revisit some of the methods in the present paper in \cite{MN} in order to prove that the cycle class map
	\begin{equation}
	\label{eqn:bv}
	A^*_{\text{taut}} \rightarrow H^*(\text{Hilb}(K3))
	\end{equation}
	is injective, where $A^*_{\text{taut}}$ denotes the subring of the Chow ring of the Hilbert scheme of arbitrarily many points on $S$ that is generated by the tautological classes. This is a partial version of the Beauville-Voisin conjecture \cite{V} for hyperk\"ahler manifolds. \\
	
	\subsection{} The structure of the present paper is the following. In Section \ref{sec:mod} we recall the moduli space of stable sheaves $\CM$, introduce the moduli spaces of flags of sheaves $\fZ_\lambda$ and several versions of the moduli space of quadruples $\fY$, and state a number of geometric properties of $\fZ_\lambda$ and $\fY$ (such as dimension, irreducibility, smoothness, l.c.i.-ness, Cohen-Macaulay-ness and normality). In Section \ref{sec:derived categories}, we use these properties to set up and prove Theorem \ref{thm:main}. In Section \ref{sec:a infty acts}, we go from derived categories to $K$--theory and prove Corollary \ref{cor:main}. Finally, in Sections \ref{sec:app} and \ref{sec:geom} we prove all the aforementioned geometric properties: dimension estimates and irreducibility in Section \ref{sec:app} and all other properties in Section \ref{sec:geom}. \\
	
	\noindent I would like to thank Mina Aganagic, Roman Bezrukavnikov, Tom Bridgeland, Eugene Gorsky, Sergei Gukov, Tamas Hausel, Ivan Loseu, Alina Marian, Davesh Maulik, Alexander Minets, Georg Oberdieck, Alexei Oblomkov, Andrei Okounkov, Hiraku Nakajima, Francesco Sala, Olivier Schiffmann, Richard Thomas, Alexander Tsymbaliuk and the anonymous referee for many interesting discussions on the subject. I would like to thank MSRI, Berkeley, for their hospitality while this paper was being written in the Spring semester of 2018. I gratefully acknowledge the support of NSF grants DMS-1600375 and DMS-1440140. \\
	
	\section{The moduli space of sheaves}
	\label{sec:mod}
	
	\medskip
	
	\subsection{} 
	\label{sub:notation}
	
All our schemes will be projective over an algebraically closed field of characteristic 0, henceforth denoted by $\BC$. A scheme will be called smooth if all of its local rings are regular. A closed embedding $Z \hookrightarrow X$ is called regular if the ideal of $Z$ in $X$ is locally generated by a regular sequence. In the present paper, we will often encounter closed embeddings which arise as the zero loci of sections
	$$
	\sigma : \CO_X \longrightarrow W
	$$
	where $W$ is a locally free sheaf on the scheme $X$. We will often abuse terminology and refer to either $\sigma$ or $\sigma^\vee : W^\vee \rightarrow \CO_X$ as ``the section", with the latter having the advantage that the zero locus is defined as the scheme
	\begin{equation}
	\label{eqn:zero locus}
	Z(\sigma) = \text{Spec}_X \left( \frac {\CO_X}{\text{Im } \sigma^\vee} \right)
	\end{equation}
	If $X$ is a Cohen-Macaulay scheme, we have the well-known inequality
	\begin{equation}
	\label{eqn:inequality}
	\dim Z(\sigma) \geq \dim X - \text{rank } W
	\end{equation}
	Since all schemes in our paper will be Cohen-Macaulay, we give the following. \\

	\begin{definition}
		\label{def:regular}
		
		The section $\sigma$ is called \textbf{regular} if equality holds in \eqref{eqn:inequality}. \\
		
	\end{definition}
	
	\noindent Indeed, the usual definition of regularity (that the coordinates of $\sigma$ in any local trivialization of $W$ form a regular sequence) is equivalent to that of Definition \ref{def:regular} over Cohen-Macaulay schemes. In this case, we have a quasi-isomorphism
	$$
	\CO_{Z(\sigma)} = \frac {\CO_X}{\text{Im }\sigma^\vee} \qis \left[ \dots \longrightarrow \wedge^2 W^\vee \longrightarrow W^\vee \xrightarrow{\sigma^\vee} \CO_X \right]
	$$
(the maps in the complex above are given by contraction with $\sigma^\vee$)	and we call $Z(\sigma) \hookrightarrow X$ a complete intersection. \\
	
	\subsection{} 
	\label{sub:derived}
	
	Given a map of schemes $\eta : X' \rightarrow X$, we may form the fiber square
	\begin{equation}
	\label{eqn:lci diagram}
	\xymatrix{Z(\eta^*(\sigma)) \ar[d] \ar@{^{(}->}[r] & X' \ar[d]^\eta \\
		Z(\sigma) \ar@{^{(}->}[r] & X}
	\end{equation}
	where
	\begin{equation}
	\label{eqn:sigma prime}
	\eta^*(W^\vee) \xrightarrow{\eta^*(\sigma^\vee)} \CO_{X'}
	\end{equation}
	is the pull-back of $\sigma^\vee$. However, we note the very important fact that
	$$
	\sigma \text{ regular} \quad \text{does not imply} \quad \eta^*(\sigma) \text{ regular}
	$$
	in general, because regular sequences are not preserved under pull-back. \\
	
	\begin{definition}
		\label{def:lci}
		
		If the sections $\sigma$ and $\eta^*(\sigma)$ are both regular, then we call \eqref{eqn:lci diagram} a \textbf{derived fiber square}. We will also use this terminology when the horizontal arrows in \eqref{eqn:lci diagram} are embeddings of zero loci followed by smooth maps. \\
		
	\end{definition}
	
	\noindent If $\sigma$ is a regular section and $X$, $X'$ are Cohen-Macaulay schemes, then \eqref{eqn:inequality} implies that $\eta^*(\sigma)$ is regular if and only if
	\begin{equation}
	\label{eqn:eq lci}
	\dim Z(\eta^*(\sigma)) - \dim X' = \dim Z(\sigma)  - \dim X
	\end{equation}
	
\medskip	
	
\begin{definition} 
\label{def:excess square}

With the notation as above, assume that there exists a subsheaf 
$$
L \subset \emph{Ker } \eta^*(\sigma^\vee)
$$
such that both $L$ and $\frac {\eta^*(W^\vee)}{L}$ are locally free. In this case, if the induced section
\begin{equation}
\label{eqn:excess}
\frac {\eta^*(W^\vee)}{L} \xrightarrow{{\sigma'}^\vee} \CO_{X'}
\end{equation}
is regular, then we call \eqref{eqn:lci diagram} a \textbf{derived fiber square} with \textbf{excess}. \\

\end{definition}	
	
\noindent In the setting of Definition \ref{def:excess square}, the locally free sheaf $L$ is called the \textbf{excess bundle}, and its rank is precisely the difference between the two sides of \eqref{eqn:eq lci}. \\
	
	\subsection{} 
	
	Given a locally free sheaf $V$ on a scheme $X$, consider the projective bundle
	$$
	\BP_X(V) = \text{Proj} \left( \text{symmetric algebra of }V \right)
	$$
	Projective bundles are among the easiest examples of moduli spaces in algebraic geometry, in the sense that they represent the functor of line bundle quotients of the vector bundle $V$. More specifically, this means that there is a natural identification
	\begin{multline}
	\text{Maps}(T, \BP_X(V)) \xleftrightarrow{1\text{-to-}1} \\ 
	\Big\{ T \xrightarrow{\phi} X, \text{ line bundle } \CL \text{ on }T, \text{ surjection } \phi^*(V) \twoheadrightarrow \CL \Big\} \label{eqn:proj represent}
	\end{multline}
	Assume that we have a map $W \rightarrow V$ of locally free sheaves on $X$. In the present paper, we will encounter local complete intersection morphisms of the form
	\begin{equation}
	\label{eqn:iota pi}
	Z(\sigma) \stackrel{\iota}\hookrightarrow \BP_X(V) \stackrel{\rho}\twoheadrightarrow X
	\end{equation}
	where the map $\iota$ is cut out by the section
	\begin{equation}
	\label{eqn:abusive section}
	\sigma : \rho^* (W) \longrightarrow \rho^* (V) \xrightarrow{\taut} \CO(1)
		\end{equation}
	and the map denoted by $\taut$ is the tautological morphism on $\BP_X(V)$. \\
	
	\begin{remark}
	
	We note a convenient abuse of terminology in calling \eqref{eqn:abusive section} a ``section", as it will appear again throughout the present paper. Given a line bundle $L$ and a vector bundle $E$ on a scheme $X$, whenever we refer to a map:
$$
L \xrightarrow{\sigma} E
$$
as a section, we are implicitly referring to the induced section $\CO \rightarrow E \otimes L^{-1}$. The same terminology will apply to the dual map $E^\vee \rightarrow L^{-1}$ in relation to $E^\vee \otimes L \rightarrow \CO$. \\

	\end{remark}
	
\noindent Since the map $\rho$ in \eqref{eqn:iota pi} is smooth, the considerations of the preceding Subsection apply to the composed map $Z(\sigma) \rightarrow X$. More specifically, given any map $\eta : X' \rightarrow X$, the fiber square analogous to \eqref{eqn:lci diagram} is derived iff $\sigma$ and $\eta^*(\sigma)$ are regular. \\
	
	\subsection{} 
	\label{sub:first mod}
	
	Consider a smooth projective surface $S$ and an ample divisor $H \subset S$. The Hilbert polynomial of a coherent sheaf $\CF$ on $S$ is given by
	$$
	P_\CF(n) := \chi(S, \CF \otimes \CO(nH)) = a n^2 + b n + c
	$$
	where $a,b,c$ are rational numbers that one can compute from the Hirzebruch-Riemann-Roch theorem. One can find formulas for these numbers in the Appendix to \cite{Univ}, but the only thing we will need in the present paper is that they can be expressed in terms of $S,H$ and the rank $r$ and Chern classes $c_1, c_2$ of $\CF$. If $\CF$ is torsion free, then $a > 0$, and one defines the reduced Hilbert polynomial as
	$$
	p_\CF(n) = \frac {P_\CF(n)}{a}
	$$ 
	A torsion-free coherent sheaf $\CF$ on $S$ is called (Gieseker $H$--) \textbf{stable} if for all proper subsheaves $\CG \subset \CF$, we have the inequality
	$$
	p_\CG(n) < p_\CF(n) 
	$$
	whenever $n \gg 0$. Since the reduced Hilbert polynomials are monic and quadratic, stability is determined by checking certain inequalities for the linear term and constant term coefficients. Note that stability depends on the ample divisor $H$, but we will fix a choice throughout this paper. \\
	
	\begin{definition} (see \cite{HL}): Let $\CM_{(r,c_1,c_2)}$ denote the quasiprojective variety which corepresents the moduli functor of stable sheaves on $S$ with the invariants $r,c_1,c_2$. \\
		
	\end{definition}
	
	\noindent It is straightforward to compute the tangent spaces to the variety $\CM_{(r,c_1,c_2)}$, and the interested reader can refer to Subsection \ref{sub:tangent} in the Appendix. A simple consequence of this computation is the following well-known fact. \\
	
	\begin{proposition}
		\label{prop:m smooth}
		
		Under Assumption $S$, $\CM_{(r,c_1,c_2)}$ is smooth of dimension
		\begin{equation}
		\label{eqn:dim m}
		\emph{const} + 2rc_2
		\end{equation}
		where $\econst$ only depends on $S,H,r,c_1$ (the specific formula can be found in \cite{Univ}). \\
		
	\end{proposition}
	
	\subsection{}
	\label{sub:ass a}
	
	Recall Assumption A of \eqref{eqn:assumption a}, which states that $\gcd(r, c_1 \cdot H) = 1$. An important consequence of this assumption is the existence of a universal sheaf
	\begin{equation}
	\label{eqn:universal}
	\xymatrix{
		\CU_{(r,c_1,c_2)} \ar@{.>}[d] \\
		\CM_{(r,c_1,c_2)} \times S}
	\end{equation}
which is flat over $\CM_{(r,c_1,c_2)}$, and has the universal property \eqref{eqn:universal property}. In what follows, given a coherent sheaf $\CF$ on a scheme $T \times S$, we will denote by $\CF_t$ its fiber over $\{t\} \times S$, for any closed point $t \in T$. Then we have the following fact (see \cite{HL}). \\
	
	\begin{proposition} 
		\label{prop:represents}
		
		Under Assumption A, $\CM_{(r,c_1,c_2)}$ is a projective variety, which represents the moduli functor of stable sheaves on $S$ with the invariants $r,c_1,c_2$. \\
		
	\end{proposition}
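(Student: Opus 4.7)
The plan is to derive both parts from the coprimality in Assumption A, following the GIT construction reviewed in Huybrechts-Lehn. The moduli space $\CM_{(r,c_1,c_2)}$ is built as a GIT quotient of an open locus in a Quot scheme $Q$ parametrizing quotients of $\CO_S(-NH)^{\oplus P(N)}$ for $N$ sufficiently large, under the natural $PGL_N$-action, and in general it only corepresents the moduli functor as a quasiprojective variety.

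First I would show that Assumption A forces every $H$-semistable sheaf with the prescribed invariants to be $H$-stable. If $\CG \subsetneq \CF$ were a proper subsheaf with the same reduced Hilbert polynomial as $\CF$, then matching the sub-leading coefficients yields the slope identity $c_1(\CG) \cdot H / \rk \CG = c_1 \cdot H / r$. Writing $r = g r''$ and $c_1 \cdot H = g d''$ with $g = \gcd(r, c_1 \cdot H)$ and $\gcd(r'', d'') = 1$, this forces $r'' \mid \rk \CG$ with $0 < \rk \CG < r = g r''$, which is possible only when $g > 1$, contradicting Assumption A. Consequently the stable and semistable loci in $Q$ coincide, so the stable locus is already closed inside the semistable locus and its GIT quotient is projective rather than merely quasiprojective.

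Second I would promote corepresentability to representability by constructing a universal sheaf, equivalently by descending the tautological quotient $\CV$ on $Q \times S$ to $\CM \times S$. The scalar subgroup $\BG_m \subset GL_N$ acts on $\CV$ with weight $1$, so a priori $\CV$ descends only as a twisted sheaf whose gerbe class lies in the Brauer group of $\CM$. Killing this class amounts to exhibiting a line bundle pulled back from $Q$ on which the central $\BG_m$ acts with weight $-1$. Using the coprimality, pick integers $a, b$ with $a r + b (c_1 \cdot H) = 1$; then a suitable tensor product of determinants of direct images of $\CV$ and $\CV \otimes \CO_S(H)$ along $Q \times S \to Q$ carries central character precisely $1$, and twisting $\CV$ by its inverse produces a genuinely $PGL_N$-equivariant sheaf that descends to the desired universal family $\CU$ on $\CM \times S$. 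Representability of the moduli functor is then immediate from the universal property of $\CV$ on the Quot scheme combined with the universal property of the GIT quotient.

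The main obstacle is the descent step: the non-existence of strictly semistable sheaves gives projectivity almost for free, but pinning down the explicit combination of determinants whose central weight cancels that of $\CV$, and verifying that the resulting descended sheaf still satisfies the universal property on arbitrary test schemes, is where Assumption A is used in its full force. Everything else amounts to standard GIT formalism.
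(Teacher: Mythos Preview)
Your proposal is correct and follows precisely the standard argument from Huybrechts--Lehn, which is exactly what the paper does: it does not prove Proposition~\ref{prop:represents} at all but simply cites \cite{HL}, noting beforehand that Assumption~A yields a universal sheaf. So you have faithfully unpacked the reference the paper defers to.

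One small expository slip: you say you want a line bundle on $Q$ with central $\BG_m$-weight $-1$ and then twist $\CV$ by its \emph{inverse}; that would give total weight $2$, not $0$. What you actually want (and what the Bezout combination $ar + b(c_1\cdot H)=1$ produces via determinants of pushforwards) is a line bundle of central weight $+1$, whose inverse then cancels the weight of $\CV$. This is a sign hiccup in the write-up, not a gap in the argument.
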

	
	\noindent The functorial description of $\CM_{(r,c_1,c_2)}$ entails the existence of natural bijections
\begin{multline}
\label{eqn:bijection}
\quad	\Maps(T,\CM_{(r,c_1,c_2)}) \xleftrightarrow{1\text{-to-}1} \Big \{ \text{coherent sheaves } \CF \text{ on } T \times S, \text{ flat over }T, \\
\text{such that } \CF_t \text{ is stable with invariants } (r,c_1,c_2), \forall t \in T \Big\} \Big / \sim   \qquad
\end{multline}
for all schemes $T$. Above, we write $\CF \sim \CF'$ if $\CF' = \CF \otimes \pi_1^*(\CL)$ for some line bundle $\CL$ on $T$, and $\pi_1 : T \times S \rightarrow T$ denotes the standard projection. The word ``represents" in Proposition \ref{prop:represents} means that the 1-to-1 correspondence \eqref{eqn:bijection} is given by
	\begin{equation}
	\label{eqn:universal property}
	\Big\{ T \xrightarrow{\phi} \CM_{(r,c_1,c_2)} \Big\} \leadsto \Big\{ \CF = (\phi \times \text{Id}_S)^* (\CU_{(r,c_1,c_2)}) \Big\}
	\end{equation}
Because of the equivalence relation $\sim$ in \eqref{eqn:bijection}, the universal sheaf $\CU_{(r,c_1,c_2)}$ is not unique, but may be tensored with any line bundle pulled back from $\CM_{(r,c_1,c_2)}$. \\

\subsection{} From now on, we will fix $(r, c_1) \in \BN \times H^2(S,\BZ)$ and set
$$
\CM = \bigsqcup_{c_2 = - \infty}^{\infty} \CM_{(r,c_1,c_2)}
$$
Note that the Bogomolov inequality implies that $c_2$ is bounded below by $\frac {r-1}{2r} c_1^2$. \\

\noindent We will write $\CU$ for the universal sheaf on $\CM \times S$ obtained as the disjoint union of the universal sheaves \eqref{eqn:universal} over all $c_2$; we assume that the latter are compatible with each other as $c_2$ varies, as explained in \cite[Subsection 5.9]{Univ}. This implies that the moduli spaces of flags of sheaves that we will introduce in the next Subsection also have universal sheaves which are contained inside each other in the obvious way. \\

\noindent Because the universal sheaf $\CU$ is flat over $\CM$, it inherits certain properties from the stable sheaves it parameterizes, such as having homological dimension 1. Indeed, any stable sheaf of rank $r>0$ is torsion free, and any torsion free sheaf on a smooth projective surface has homological dimension 1, see \cite[Example 1.1.16]{HL}. The fact that this also holds in families is made explicit by the following result. \\
	
	\begin{proposition}
		\label{prop:length 1}
		
		(\cite{Univ}) There exists a short exact sequence
		\begin{equation}
		\label{eqn:length 1}
		0 \rightarrow \CW \rightarrow \CV \rightarrow \CU \rightarrow 0
		\end{equation}
		with $\CW$ and $\CV$ locally free sheaves on $\CM \times S$. In fact, we can take
		\begin{equation}
		\label{eqn:we can take}
		\CV = \pi_1^* \Big[ \pi_{1*} \Big( \CU \otimes \pi_2^*(\CO(nH) ) \Big) \Big] \otimes \pi_2^*(\CO(-n H))
		\end{equation}
		for $n \gg 0$, where $\pi_1 :  \CM \times S \rightarrow \CM$ and $\pi_2 :  \CM \times S \rightarrow S$ are the standard projections. \\
		
	\end{proposition}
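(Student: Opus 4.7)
The plan is to verify that the explicit formula \eqref{eqn:we can take} does produce a locally free sheaf $\CV$ fitting into a short exact sequence $0 \to \CW \to \CV \to \CU \to 0$ with $\CW$ also locally free. The argument breaks into three steps: constructing $\CV$ together with the surjection $\CV \twoheadrightarrow \CU$ via cohomology and base change, defining $\CW$ as the kernel, and then verifying that $\CW$ is locally free by a homological dimension calculation. The main obstacle is the last step: the kernel of a surjection from a locally free sheaf onto a sheaf of projective dimension one is not automatically locally free, so a fiberwise argument is needed to rule this out.

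First I would use that stable sheaves with fixed numerical invariants form a bounded family (\cite{HL}). For $n \gg 0$, every fiber $\CF = \CU|_{\{m\} \times S}$ satisfies $H^i(S, \CF(nH)) = 0$ for $i > 0$ and $\CF(nH)$ is generated by its global sections. Applying cohomology and base change along the flat projection $\pi_1 : \CM \times S \to \CM$, this gives $R^i \pi_{1*}(\CU \otimes \pi_2^* \CO(nH)) = 0$ for $i > 0$ and shows that $\pi_{1*}(\CU \otimes \pi_2^* \CO(nH))$ is locally free on $\CM$, with its formation commuting with arbitrary base change. The relative evaluation map
\[
\pi_1^* \pi_{1*}\bigl( \CU \otimes \pi_2^* \CO(nH) \bigr) \longrightarrow \CU \otimes \pi_2^* \CO(nH)
\]
is fiberwise surjective by global generation, hence surjective; twisting by $\pi_2^* \CO(-nH)$ produces the required surjection $\CV \twoheadrightarrow \CU$ with $\CV$ locally free of the form \eqref{eqn:we can take}.

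Let $\CW$ be the kernel. Since both $\CU$ and $\CV$ are $\pi_1$-flat, so is $\CW$. It remains to show that $\CW$ is locally free at every point $p = (m,s) \in \CM \times S$, for which it suffices to check that $\Tor_1^{\CO_{\CM \times S}}(\CW, k(p)) = 0$. Tensoring the defining short exact sequence with $k(p)$ and using the local freeness of $\CV$ collapses the resulting long exact sequence to an isomorphism $\Tor_1^{\CO_{\CM \times S}}(\CW, k(p)) \cong \Tor_2^{\CO_{\CM \times S}}(\CU, k(p))$. To compute the latter, I would factor the derived restriction of $\CU$ to $\{p\}$ through the intermediate closed subscheme $\{m\} \times S$. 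Because $\CU$ is $\pi_1$-flat, the restriction $\CU \otimes^L_{\CO_{\CM \times S}} \CO_{\{m\} \times S}$ is underived and equal to the coherent sheaf $\CF_m$ on $S$. Further derived restriction to $\{s\} \subset S$ therefore yields
\[
\CU \otimes^L_{\CO_{\CM \times S}} k(p) \;\cong\; \CF_m \otimes^L_{\CO_S} k(s).
\]
Since $\CF_m$ is torsion free on the smooth surface $S$, it has projective dimension at most one (Example 1.1.16 of \cite{HL}), so this derived tensor product is concentrated in cohomological degrees $0$ and $-1$. In particular $\Tor_2^{\CO_{\CM \times S}}(\CU, k(p)) = 0$, and $\CW$ is locally free at $p$, completing the proof.
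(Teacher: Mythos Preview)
Your argument is correct. The paper does not supply a proof of this proposition; it simply cites it from \cite{Univ}, so there is no in-paper argument to compare against. Your approach is the standard one: use boundedness and cohomology-and-base-change to produce the locally free $\CV$ with a surjection to $\CU$, then verify local freeness of the kernel $\CW$ via the vanishing of $\Tor_1(\CW,k(p)) \cong \Tor_2(\CU,k(p))$, which you obtain by factoring the derived restriction to a point through the fiber $\{m\}\times S$ and invoking that torsion-free sheaves on a smooth surface have homological dimension $\le 1$.

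One small remark on your framing: once you know that $\Tor_2^{\CO_{\CM\times S}}(\CU,k(p))=0$ for every closed point $p$, this already says that $\CU$ has projective dimension $\le 1$ on the regular scheme $\CM\times S$, and then Schanuel's lemma would give local freeness of the kernel of \emph{any} locally free surjection onto $\CU$. So the ``main obstacle'' you flag is not quite that kernels of surjections onto projective-dimension-one sheaves can fail to be locally free (they cannot), but rather that one must first establish the global homological dimension bound from the fiberwise one, which is exactly what your $\Tor_2$ computation accomplishes.
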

	
	\subsection{}
	\label{sub:schemes}
	
	A \textbf{set partition} is an equivalence relation on a finite ordered set. We will represent set partitions symbolically, for example $(x,y,z)$ will refer to the partition of a 3-element set into distinct 1-element subsets, while $(x,y,x)$ (respectively $(x,x,x)$) refers to the equivalence relation which sets the first and the last element (respectively all elements) equivalent to each other. The size of a partition $\lambda$, which is denoted by $|\lambda|$, is the number of elements of the underlying set. \\
	
	\begin{definition}
		\label{def:z}
		
		For a set partition $\lambda$ of size $n$, we will consider the scheme
		\begin{multline}
		\fZ_\lambda = \Big \{(\CF_0 \subset_{x_1} \CF_1 \subset_{x_2} \dots \subset_{x_n} \CF_n) \text{ stable coherent sheaves} \\
		\text{ for some }x_1,\dots,x_n \in S \text{ such that } x_i = x_j \text{ if } i \sim j \text{ in } \lambda \Big\} \label{eqn:z}
		\end{multline}
		where $\CF' \subset_x \CF$ means that $\CF' \subset \CF$ and $\CF/\CF' \cong \BC_x$. If $n = 0$, we write $\fZ_\emptyset = \CM$. \\
		
	\end{definition}
	
	\noindent We will often write
	$$
	\fZ_n = \fZ_{(x_1,\dots,x_n)} \qquad \text{and} \qquad \fZ_n^\bullet = \fZ_{(x,\dots,x)}
	$$
	Definition \ref{def:z} should be read as ``the scheme $\fZ_\lambda$ represents the functor"
	\begin{multline}
	\label{eqn:functor z}
	\Maps(T, \fZ_\lambda) \xleftrightarrow{1\text{-to-}1} \Big \{ \text{flags of coherent sheaves } (\CF_0 \subset \CF_1 \subset \dots \subset \CF_n) \text{ on } T \times S, \ \ \\ \text{ flat over }T, \text{ such that } \CF_{0,t},\dots, \CF_{n,t} \text{ are stable with invariants }(r,c_1), \forall t\in T, \\ \text{together with the data in (a) and (b) below} \Big\} \Big/\sim  \qquad \qquad
	\end{multline} 
where $\sim$ is the equivalence relation induced by tensoring $\CF_0,\dots,\CF_n$ by one and the same line bundle pulled back from $T$. In the formula above, (a) and (b) refers to \\
	
	\begin{enumerate}
		\item[(a)] maps $x_1,\dots,x_n : T \rightarrow S$ such that $x_i = x_j$ if $i \sim j$ in $\lambda$, \\
		
		\item[(b)] line bundles $\CL_1,\dots,\CL_n$ on $T$ such that $\CF_i/\CF_{i-1} \cong \Gamma^{i}_*(\CL_i)$, where $\Gamma^i : T \hookrightarrow T \times S$ denotes the graph of the map $x_i : T \rightarrow S$ from (a) \\
	\end{enumerate}
	
	\noindent Proposition \ref{prop:tower} will establish (by induction on $n$) that the functor \eqref{eqn:functor z} is indeed representable, which implies the existence of the scheme stipulated in Definition \ref{def:z}. However, to keep our notation simple, we will denote points of this scheme as \eqref{eqn:z} instead of the more complicated notation \eqref{eqn:functor z}. In other words, all the constructions from now on will be done at the level of closed points, and the family versions are left as exercises to the interested reader. \\ 
	
	\subsection{} If $\lambda$ is a partition on an $n$ element ordered set, then we will use the notation
	$$
	|\lambda \qquad \text{and} \qquad \lambda|
	$$
	for the partition on the $n-1$ element ordered set obtained by dropping the first (respectively last) element of $\lambda$. Then we may consider the maps
	\begin{equation}
	\label{eqn:projection maps}
	\xymatrix{
		\fZ_\lambda \ar[d]_{\pi_-} \\
		\fZ_{|\lambda} \times S^\#} \qquad \text{and} \qquad \xymatrix{
		\fZ_\lambda \ar[d]^{\pi_+} \\
		\fZ_{\lambda|} \times S^\#}
	\end{equation}
	given by forgetting the first (respectively last) sheaf in the flag \eqref{eqn:z}. The number $\#$ is 0 or 1 depending on whether the first (respectively last) element of $\lambda$ is or is not equivalent to some other element of $\lambda$. Therefore, the map $\pi_-$ (resp. $\pi_+$) remembers the point $x_1 \in S$ (resp. $x_n \in S$) if and only if $\# = 1$. For example:
	$$
	\xymatrix{
		\fZ_{(x,y,x,z)} \ar[d]_{\pi_-} \\
		\fZ_{(y,x,z)}} \qquad \text{but} \qquad \xymatrix{
		\fZ_{(y,x,x,z)} \ar[d]_{\pi_-} \\
		\fZ_{(x,x,z)} \times S}
	$$
	As is clear from the example above, in the case of the arrow on the left, there is no reason to remember the point $x \in S$ ``forgotten" from the set partition $\lambda = (x,y,x,z)$, since it can be recovered from $|\lambda = (y,x,z)$. In other words, our convention when $\# = 0$ is  that $\pi_-$ (resp. $\pi_+$) composed with the graph of the map that remembers $x_1$ (resp. $x_n$) yields the analogous map $\fZ_\lambda \rightarrow \fZ_{|\lambda} \times S$ (resp. $\fZ_\lambda \rightarrow \fZ_{\lambda|} \times S$) to the one we would have defined in the $\# = 1$ case. \\
		
	\subsection{}
	
	There exists a natural map $p_i : \fZ_\lambda \rightarrow \CM$ which only remembers the sheaf $\CF_i$ in the flag \eqref{eqn:z}. We will write $\CU_i = (p_i \times \text{Id}_S)^* (\CU)$, and note that Proposition \ref{prop:length 1} implies that there exists an exact sequence
	\begin{equation}
	\label{eqn:length 1 non flat}
	0 \rightarrow \CW_i \rightarrow \CV_i \rightarrow \CU_i \rightarrow 0 
	\end{equation}
	on $\fZ_\lambda \times S$, where $\CW_i$ and $\CV_i$ are locally free. While the map $p_i$ is not flat, \eqref{eqn:length 1 non flat} is short exact because all three of the coherent sheaves in \eqref{eqn:length 1} are flat over $\CM$. \\
	
	\noindent In the following Proposition, we consider any set partition $\lambda$ of size $n$, and let $\fZ_\lambda$ denote the moduli space \eqref{eqn:z}, whose points will be denoted by $\CF_0 \subset \dots \subset \CF_n$. \\
	
	\begin{proposition}
		\label{prop:tower}
		
The map $\pi_-$ of \eqref{eqn:projection maps} can be realized as the diagonal arrow in
		\begin{align}
		&\xymatrix{
			\fZ_\lambda \ar[rd]_{\pi_-} \ar@{^{(}->}[r] & \BP_{\fZ_{|\lambda} \times S} (\CV_1) \ar[d]^\rho \\
			& \fZ_{|\lambda} \times S} & &\text{if } \# = 1, \text{ and} \label{eqn:pi minus v1} \\
		&\xymatrix{
			\fZ_\lambda \ar[rd]_{\pi_-} \ar@{^{(}->}[r] & \BP_{\fZ_{|\lambda}} (\Gamma^{x*} (\CV_1)) \ar[d]^\rho \\
			& \fZ_{|\lambda}} & &\text{if } \# = 0 \label{eqn:pi minus v2}
		\end{align}
		(in the latter case, $\Gamma^x : \fZ_{|\lambda} \rightarrow \fZ_{|\lambda} \times S$ is the graph of the function $\fZ_{|\lambda} \rightarrow S$ that remembers the support point we forget when going from $\lambda$ to $|\lambda$) where $\hookrightarrow$ is the closed embedding cut out by the following composition of maps of locally free sheaves on the projectivization:
\begin{align}
&\rho^*(\CW_1) \rightarrow \rho^*(\CV_1) \stackrel{\emph{taut}}\twoheadrightarrow \CO(1) & &\text{if } \# = 1, \text{ and} \label{eqn:section minus 1} \\
&\rho^*(\Gamma^{x*}(\CW_1)) \rightarrow \rho^*(\Gamma^{x*}(\CV_1)) \stackrel{\emph{taut}}\twoheadrightarrow \CO(1) & &\text{if } \# = 0 \label{eqn:section minus 0} 
\end{align}
The line bundle $\CL_1$ on $\fZ_\lambda$ is the restriction of $\CO(1)$ from the projectivization. \\
		
		\noindent Similarly, the map $\pi_+$ of \eqref{eqn:projection maps} can be realized as the diagonal arrow in
		\begin{align}
		&\xymatrix{
			\fZ_\lambda \ar[rd]_{\pi_+} \ar@{^{(}->}[r] & \BP_{\fZ_{\lambda|} \times S} (\CW^\vee_n \otimes \omega_S) \ar[d]^\rho \\
			& \fZ_{\lambda|} \times S} & &\text{if } \# = 1, \text{ and} \label{eqn:pi plus v1} \\
		&\xymatrix{
			\fZ_\lambda \ar[rd]_{\pi_+} \ar@{^{(}->}[r] & \BP_{\fZ_{\lambda|}} (\Gamma^{x*} (\CW_n^\vee \otimes \omega_S)) \ar[d]^\rho \\
			& \fZ_{\lambda|}} & &\text{if } \# = 0 \label{eqn:pi plus v2}
		\end{align}
		(we write $\omega_S$ for the canonical line bundle on $S$, and its various pullbacks) where $\hookrightarrow$ is the closed embedding cut out by the following composition of maps of locally free sheaves on the projectivization:
\begin{align}
& \rho^*(\CV_n^\vee \otimes \omega_S) \rightarrow  \rho^*(\CW^\vee_n \otimes \omega_S) \stackrel{\emph{taut}}\twoheadrightarrow \CO(1) & &\text{if } \# = 1, \text{ and}\label{eqn:section plus 1} \\
& \rho^*(\Gamma^{x*}(\CV^\vee_n \otimes \omega_S)) \rightarrow  \rho^*(\Gamma^{x*}(\CW^\vee_n \otimes \omega_S)) \stackrel{\emph{taut}}\twoheadrightarrow \CO(1) & &\text{if } \# = 0 \label{eqn:section plus 0}
\end{align}
The line bundle $\CL_n$ on $\fZ_\lambda$ is the restriction of $\CO(-1)$ from the projectivization. \\
		
	\end{proposition}
	
	\begin{proof} When $\lambda$ has size 1, the Proposition was proved in \cite[Definition 2.5, Proposition 2.8, Proposition 2.10]{Univ}, and the general case will follow the same logic. We will prove the statements pertaining to $\pi_-$, and leave those pertaining to $\pi_+$ as exercises to the interested reader. Moreover, we will only prove the case $\# = 1$, as $\# = 0$ is analogous. We interpret the Proposition as follows: having constructed $\fZ_{|\lambda}$ which represents the functor \eqref{eqn:functor z} for the partition $|\lambda$, we must show that the scheme $\fZ_\lambda$ defined as the closed embedding \eqref{eqn:pi minus v1} represents the functor \eqref{eqn:functor z} for the partition $\lambda$. \\
		
		\noindent Recall that a map $\phi : T \rightarrow \fZ_{|\lambda} \times S$ consists of a flag 
		\begin{equation}
		\label{eqn:america}
		\CF_1 \subset \CF_2 \subset \dots \subset \CF_n
		\end{equation}
of coherent sheaves on $T \times S$ which are flat over $T$, together with maps 
$$
x_1,x_2,\dots,x_n : T \rightarrow S
$$
and line bundles $\CL_2,\dots,\CL_n$ on $T$ such that $\CF_i/\CF_{i-1} \cong \Gamma^{i}_*(\CL_i)$ for all $i \in \{2,\dots,n\}$, where $\Gamma^i : T \hookrightarrow T \times S$ denotes the graph of $x_i$. We write $x_1$ for the map $T \rightarrow S$ arising from the second factor of $\fZ_{|\lambda} \times S$ and $\Gamma^1$ for the graph of $x_1$. Therefore, we may write
		$$
		\phi = (\bar{\phi} \times \text{Id}_S) \circ \Gamma^{1}
		$$
		where $\bar{\phi} : T \rightarrow \fZ_{|\lambda}$ represents the flag \eqref{eqn:america}. Then a map
		$$
		T \rightarrow \BP_{\fZ_{|\lambda} \times S}(\CV_1)
		$$
		consists of the data of $\bar{\phi}$ and $x_1$ as above, together with a line bundle $\CL_1$ on $T$ and a surjective homomorphism
		$$
		\tau : \phi^*(\CV_1) \twoheadrightarrow \CL_1
		$$
		Finally, a map from $T$ to the subscheme $\fZ_\lambda$ of \eqref{eqn:pi minus v1} consists of $\bar{\phi}$, $x_1$, $\CL_1$ and $\tau$ as above, such that the following composition vanishes:
		$$
		\phi^*(\CW_1) \rightarrow \phi^*(\CV_1) \stackrel{\tau}\twoheadrightarrow \CL_1
		$$
		Since pull-back is right-exact, this is equivalent to a surjective map 
		$$
		\phi^*(\CU_1) \twoheadrightarrow \CL_1 \qquad \Leftrightarrow \qquad \Gamma^{1*} \circ (\bar{\phi} \times \text{Id}_S)^*(\CU_1) \twoheadrightarrow \CL_1 \qquad \Leftrightarrow \qquad \Gamma^{1*}(\CF_1) \twoheadrightarrow \CL_1
		$$
		By adjunction, this datum is equivalent to a surjection $\CF_1 \twoheadrightarrow \Gamma^{1}_*(\CL_1)$ on $T \times S$. Letting $\CF_0$ be the kernel of this surjection, this precisely completes the flag \eqref{eqn:america} to the flag \eqref{eqn:functor z}. By \cite[Proposition 5.5]{Univ}, $\CF_0$ is stable if and only if $\CF_1$ is stable.
		
	\end{proof}
	
	\subsection{} 
	\label{sub:tor}
	
	Consider an arbitrary set partition $\lambda$ of size $n$ and the scheme $\fZ_\lambda$ of Definition \ref{def:z}. Let $\pi_1 : \fZ_\lambda \times S \rightarrow \fZ_\lambda$ and $\pi_2 : \fZ_\lambda \times S \rightarrow S$ denote the standard projections. For each $i \in \{1,\dots,n\}$, we have the following short exact sequence on $\fZ_\lambda \times S$:
	\begin{equation}
	\label{eqn:small ses}
	0 \rightarrow \CU_{i-1} \rightarrow \CU_i \rightarrow \CA_i \otimes \CO_{\Gamma^i} \rightarrow 0
	\end{equation}
	where $\CA_i = \pi_1^*(\CL_i)$, and $\Gamma^i \subset \fZ_\lambda \times S$ denotes the graph of the map 
	$$
	p_S^i : \fZ_\lambda \rightarrow S
	$$
	that remembers the $i$--th support point $x_i$. If $i$ and $j$ are equivalent elements of the set partition $\lambda$, then $\Gamma^i = \Gamma^j$. We may upgrade \eqref{eqn:small ses} to the following commutative diagram of sheaves on $\fZ_\lambda \times S$ with all rows and columns exact:
	\begin{equation}
	\label{eqn:big diagram}
	\xymatrix{ & 0 & 0 & 0  & \\
		0 \ar[r] & \CU_{i-1} \ar[u] \ar[r] & \CU_i \ar[u] \ar[r] & \CB_i \otimes \CO_{\Gamma^i} \ar[u] \ar[r] & 0 \\
		0 \ar[r] & \CV_{i-1} \ar[u] \ar[r] & \CV_i \ar[u] \ar[r] & \CB_i \ar[u] \ar[r] & 0 \\ 
		0 \ar[r] & \CW_{i-1} \ar[u] \ar[r] & \CW_i \ar[u] \ar[r] & \CB_i \otimes \CI_{\Gamma^i} \ar[u] \ar[r] & 0 \\
		& 0 \ar[u] & 0 \ar[u] & 0 \ar[u] & }
	\end{equation}
	where $\CI_{\Gamma_i}$ denotes the ideal of the graph $\Gamma^i$ inside $\fZ_\lambda \times S$, and the line bundle
	\begin{equation}
	\label{eqn:new line bundle}
	\CB_i = \pi_1^* \Big (\CL_i \otimes p_S^{i*} (\CO(nH)) \Big) \otimes \pi_2^*(\CO(-nH))
	\end{equation}
	is isomorphic to the quotient $\CV_i/\CV_{i-1}$ due to \eqref{eqn:we can take}. Note that $\CB_i|_{\Gamma^i} \cong \CA_i|_{\Gamma^i} \cong \CL_i$. In the Proposition below, we will restrict the diagram \eqref{eqn:big diagram} to $\Gamma^1$ and $\Gamma^n$. \\
	
\begin{proposition}
\label{prop:tor}
		
Let $\lambda$ be a set partition of size $n$. \\
		
\begin{enumerate}[leftmargin=*]

\item[$(-)$] If $p_S^1$ is flat, then there exists an injective map of sheaves $\CL_1 \otimes p_S^{1*}(\omega_S) \hookrightarrow \CW_0|_{\Gamma^1}$ with locally free quotient, such that the following composition vanishes:
\begin{equation}
\label{eqn:finally 1}
\CL_1 \otimes p_S^{1*}(\omega_S) \hookrightarrow \CW_0|_{\Gamma^1} \rightarrow \CV_0|_{\Gamma^1} 
\end{equation}

\item [$(+)$] If $p_S^n$ is flat, then there exists an injective map of sheaves $\CL_n^{-1}  \hookrightarrow (\CV_n|_{\Gamma^n})^\vee$ with locally free quotient, such that the following composition vanishes:
\begin{equation}
\label{eqn:finally 2}
\CL_n^{-1}  \hookrightarrow (\CV_n|_{\Gamma^n})^\vee \rightarrow (\CW_n|_{\Gamma^n})^\vee
\end{equation}
	
\end{enumerate}		

\noindent In both formulas above, the second arrows are induced by \eqref{eqn:length 1 non flat}. \\

\end{proposition}
	
	\begin{remark}
	
The flatness of $p_S^1,\dots,p_S^n$ is proved in Proposition \ref{prop:flat morphism} for any $\lambda$ of size $n\leq 3$, but we expect it to hold for any set partition $\lambda$. \\

	\end{remark}
	
	\begin{proof} Let us first prove $(-)$. Upon restricting \eqref{eqn:big diagram} (for $i=1$) to $\Gamma^1 : \fZ_\lambda \hookrightarrow \fZ_\lambda \times S$, we obtain the following diagram of Tor sheaves with exact rows and columns: 
\begin{equation}
\label{eqn:tor diagram 1}
		\xymatrix{& \CU_{0}|_{\Gamma^1} \ar[r] & \CU_1|_{\Gamma^1} \ar@{->>}[r] & \CB_1|_{\Gamma^1} \\
			& \CV_{0}|_{\Gamma^1} \ar@{^{(}->}[r] \ar@{->>}[u] & \CV_1|_{\Gamma^1} \ar@{->>}[r] \ar@{->>}[u] & \CB_1|_{\Gamma^1} \ar@{=}[u] \\ 
			\CB_1 \otimes \Tor_1(\CI_{\Gamma^1},\CO_{\Gamma^1}) \ar@{^{(}->}[r] & \CW_{0}|_{\Gamma^1} \ar[u] \ar[r] & \CW_1|_{\Gamma^1} \ar[u] \ar@{->>}[r] & \CB_1 \otimes \CI_{\Gamma^1}|_{\Gamma^1} \ar[u]^0 \\
			\CB_1 \otimes \Tor_2(\CO_{\Gamma^1},\CO_{\Gamma^1})  \ar@{=}[u] \ar@{^{(}->}[r] & \Tor_1(\CU_{0},\CO_{\Gamma^1}) \ar@{^{(}->}[u] \ar[r] & \Tor_1(\CU_1,\CO_{\Gamma^1}) \ar@{^{(}->}[u] \ar[r] & \CB_1 \otimes \Tor_1(\CO_{\Gamma^1},\CO_{\Gamma^1}) \ar@{=}[u]}
\end{equation}
The injectivity of the maps denoted by $\hookrightarrow$ in the diagram above is due to the fact that $\CW_1, \CV_0,\CV_1, \CB_1$ are locally free, except for the injectivity of the horizontal map in the bottom left corner, which follows from the fact that $\CU_1$ has homological dimension 1 (see \eqref{eqn:length 1 non flat}). We have the following fiber square:
		$$
		\xymatrix{ \fZ_\lambda  \ar@{^{(}->}[r]^-{\Gamma^1} \ar[d]_-{p_S^1} & \fZ_\lambda \times S \ar[d]^{p_S^1 \times \text{Id}_S} \\
			S \ar[r]^-{\Delta} & S \times S}
		$$
		The assumption that $p_S^1$ is a flat morphism implies that
		$$
		\Tor_2(\CO_{\Gamma^1}, \CO_{\Gamma^1}) = p_S^{1*} \left( \Tor_2(\CO_\Delta, \CO_\Delta) \right) = p_S^{1*}(\omega_S)
		$$
		on $\fZ_\lambda$. The latter equality in the equation above is a standard and straightforward exercise, which follows from the fact that $S \hookrightarrow S \times S$ is a smooth embedding of codimension 2. If we recall the fact that $\CB_1|_{\Gamma^1} \cong \CL_1$, then we conclude that
		\begin{equation}
		\label{eqn:desert}
		\CB_1 \otimes \Tor_2(\CO_{\Gamma^1}, \CO_{\Gamma^1}) \cong \CL_1 \otimes p_S^{1*}(\omega_S)
		\end{equation}
		Since $\CB_1 \otimes \Tor_2(\CO_{\Gamma^1}, \CO_{\Gamma^1})$ injects into the kernel of $\CW_0|_{\Gamma^1} \rightarrow \CV_0|_{\Gamma_1}$ (as can be seen from diagram \eqref{eqn:tor diagram 1}), this establishes the vanishing of the composition \eqref{eqn:finally 1}. The fact that the sheaf
		$$
		\frac {\CW_0|_{\Gamma^1}}{\CL_1 \otimes p_S^{1*}(\omega_S)} \cong \Ker \Big(\CW_1|_{\Gamma^1} \twoheadrightarrow \CB_1 \otimes \CI_{\Gamma^1}|_{\Gamma^1} \Big)
		$$
		is locally free follows from the facts that $\CW_1$ is locally free, and $\CI_{\Gamma^1}|_{\Gamma^1}$ has homological dimension 1 (since it is the pull-back of the defining ideal of the codimension 2 regular embedding $\Delta : S \hookrightarrow S \times S$ under the flat morphism $p_S^{1*}$). \\
		
		\noindent To deal with the $(+)$ case, let us restrict \eqref{eqn:big diagram} (for $i=n$) to $\Gamma^n$ and then dualize:
		\begin{equation}
		\label{eqn:tor diagram 2}
		\xymatrix{
			(\CV_{n-1}|_{\Gamma^n})^\vee \ar[d] & (\CV_n|_{\Gamma^n})^\vee \ar@{->>}[l] \ar[d] & (\CB_n|_{\Gamma^n})^\vee \ar@{_{(}->}[l] \ar[d] \\ 
			(\CW_{n-1}|_{\Gamma^n})^\vee & (\CW_n|_{\Gamma^n})^\vee \ar[l] & (\CB_n \otimes \CI_{\Gamma^n}|_{\Gamma^n})^\vee \ar[l]}
		\end{equation}
		The top-most row is exact, due to the fact that restriction and dualization preserve short exact sequences of locally free sheaves. Because $p_S^n : \fZ_\lambda \rightarrow S$ is flat, we have
		$$
		\CI_{\Gamma^n}|_{\Gamma^n} \cong p_S^{n*}(\Omega_S^1) \qquad \Rightarrow \qquad (\CI_{\Gamma^n}|_{\Gamma^n})^\vee \cong  p_S^{n*}(\CT_S^1)
		$$
		and the right-most vertical map in \eqref{eqn:tor diagram 2} is zero. Therefore, the rank 1 sub-bundle $(\CB_n|_{\Gamma^n})^\vee \cong \CL_n^{-1}$ embeds in the kernel of $(\CV_n|_{\Gamma^n})^\vee \rightarrow (\CW_n|_{\Gamma^n})^\vee$, thus implying the vanishing of the composition \eqref{eqn:finally 2}. Moreover, the quotient
$$
\frac {(\CV_n|_{\Gamma^n})^\vee}{\CL_n^{-1}} \cong (\CV_{n-1}|_{\Gamma^n})^\vee
$$
is locally free by construction.
		
	\end{proof}
	
\begin{remark} 

A straightforward analogue of the proof given above shows that statement $(-)$ (respectively $(+)$) of Proposition \ref{prop:tor} would still hold with the numbers $0,1$ (respectively $n-1,n$) replaced by $i-1,i$, for any $i \in \{1,\dots,n\}$. \\

\end{remark}	
	
\subsection{} 
	
We will now state certain geometric properties of several schemes $\fZ_\lambda$ with $n = |\lambda| \leq 4$, to be proved in Section \ref{sec:geom}. Let $k$ be the number of distinct elements of $\lambda$. \\

\begin{definition}
\label{def:exp dim}

The expected dimension of (a connected component of) $\fZ_\lambda$ is 
$$
\econst + c_2(\CF_0) + c_2(\CF_n) + k
$$
for any $(\CF_0 \subset_{x_1} \dots \subset_{x_n} \CF_n) \in \fZ_\lambda$. \\

\end{definition}

\noindent Moreover, consider the map $\pi : \fZ_\lambda \rightarrow \CM \times S^n$ which takes $(\CF_0 \subset_{x_1} \dots \subset_{x_n} \CF_n)$ to $(\CF_n, x_1,\dots,x_n)$. In what follows, whenever we say that $\fZ_\lambda$ is irreducible, we actually mean that $\pi^{-1}(C)$ is irreducible for every connected component $C$ of $\CM \times S^n$. \\

\begin{proposition}
\label{prop:x}

$\fZ_{(x)}$ is smooth and irreducible of expected dimension. \\
		
\end{proposition}	

\begin{proposition}
\label{prop:xx}

$\fZ_{(x,x)}$ is smooth and irreducible of expected dimension. Moreover, the following fiber square is derived with excess:
\begin{equation}
\label{eqn:square xx}
		\xymatrix{\fZ_{(x,x)} \ar[r]^-{\pi_-} \ar[d]_-{\pi_+} & \fZ_{(x)} \ar[d]  \\
			\fZ_{(x)} \ar[r] & \CM \times S} \qquad \qquad \xymatrix{\CF_0 \subset_x \CF_1 \subset_x \CF_2 \ar@{|->}[r] \ar@{|->}[d] & \CF_1 \subset_x \CF_2 \ar@{|->}[d]  \\
			\CF_0 \subset_x \CF_1 \ar@{|->}[r] & (\CF_1,x)}
\end{equation}
The excess bundle is $\CL_2 \otimes \CL_1^{-1} \otimes p_S^*(\omega_S)$, where $\fZ_{(x,x)} \xrightarrow{p_S} S$ is the map that records $x$. \\
		
\end{proposition}
	
\begin{proposition}
\label{prop:xy}

$\fZ_{(x,y)}$ is l.c.i. and irreducible of expected dimension. Moreover, the following fiber square is derived:
\begin{equation}
\label{eqn:square xy}
		\xymatrix{\fZ_{(x,y)} \ar[r] \ar[d] & \fZ_{(y)} \times S \ar[d]  \\
			\fZ_{(x)} \times S \ar[r] & \CM \times S \times S} \qquad \xymatrix{\CF_0 \subset_x \CF_1 \subset_y \CF_2 \ar@{|->}[r] \ar@{|->}[d] & (\CF_1 \subset_y \CF_2,x) \ar@{|->}[d]  \\
			(\CF_0 \subset_x \CF_1,y) \ar@{|->}[r] & (\CF_1,x,y)}
\end{equation}
		
	\end{proposition}

\medskip
	
\begin{proposition}
\label{prop:xxx}

$\fZ_{(x,x,x)}$ is l.c.i. and irreducible of expected dimension. Moreover, the following fiber square is derived:
$$
		\xymatrix{\fZ_{(x,x,x)} \ar[r] \ar[d] & \fZ_{(x,x)} \ar[d]  \\
			\fZ_{(x,x)} \ar[r] & \fZ_{(x)}} \qquad \qquad \xymatrix{\CF_0 \subset_x \CF_1 \subset_x \CF_2 \subset_x \CF_3 \ar@{|->}[r] \ar@{|->}[d] & \CF_1 \subset_x \CF_2 \subset_x \CF_3 \ar@{|->}[d]  \\
			\CF_0 \subset_x \CF_1 \subset_x \CF_2 \ar@{|->}[r] & \CF_1 \subset_x \CF_2}
		$$
		
\end{proposition}

\medskip
	
\begin{proposition}
\label{prop:xxy and yxx}

$\fZ_{(x,x,y)}$ and $\fZ_{(y,x,x)}$ are l.c.i. and irreducible of expected dimension. Moreover, the following fiber squares are derived:
$$
\xymatrix{\fZ_{(x,x,y)} \ar[r] \ar[d] & \fZ_{(y)} \times S \ar[d]  \\
			\fZ_{(x,x)} \times S \ar[r] & \CM \times S \times S} \qquad \qquad \xymatrix{\CF_0 \subset_x \CF_1 \subset_x \CF_2 \subset_y \CF_3 \ar@{|->}[r] \ar@{|->}[d] & (\CF_2 \subset_y \CF_3,x) \ar@{|->}[d]  \\
			(\CF_0 \subset_x \CF_1 \subset_x \CF_2,y) \ar@{|->}[r] & (\CF_2, x, y)}
$$
$$
		\xymatrix{\fZ_{(y,x,x)} \ar[r] \ar[d] & \fZ_{(x,x)} \times S \ar[d]  \\
			\fZ_{(y)} \times S \ar[r] & \CM \times S \times S} \qquad \xymatrix{\CF_0 \subset_y \CF_1 \subset_x \CF_2 \subset_x \CF_3 \ar@{|->}[r] \ar@{|->}[d] & (\CF_1 \subset_x \CF_2 \subset_x \CF_3,y) \ar@{|->}[d]  \\
			(\CF_0 \subset_y \CF_1,x) \ar@{|->}[r] & (\CF_1, x, y)}
$$
		
\end{proposition}	

\medskip

\begin{proposition}
\label{prop:xyx}

$\fZ_{(x,y,x)}$ is Cohen-Macaulay and irreducible of expected dimension. \\
		
\end{proposition}
	
\begin{proposition}
\label{prop:xxxx}

$\fZ_{(x,x,x,x)}$ is l.c.i. and has two irreducible components of expected dimension. Moreover, the following fiber squares are derived:
$$
\xymatrix{\fZ_{(x,x,x,x)} \ar[r] \ar[d] & \fZ_{(x,x,x)} \ar[d]  \\
			\fZ_{(x,x)} \ar[r] & \fZ_{(x)}} \xymatrix{\CF_0 \subset_x \CF_1 \subset_x \CF_2 \subset_x \CF_3 \subset_x \CF_4 \ar@{|->}[r] \ar@{|->}[d] & \CF_1 \subset_x \CF_2 \subset_x \CF_3 \subset_x \CF_4 \ar@{|->}[d]  \\
			\CF_0 \subset_x \CF_1 \subset_x \CF_2 \ar@{|->}[r] & \CF_1 \subset_x \CF_2}
$$
$$
\xymatrix{\fZ_{(x,x,x,x)} \ar[r] \ar[d] & \fZ_{(x,x)} \ar[d]  \\
			\fZ_{(x,x,x)} \ar[r] & \fZ_{(x)}} \qquad \xymatrix{\CF_0 \subset_x \CF_1 \subset_x \CF_2 \subset_x \CF_3 \subset_x \CF_4 \ar@{|->}[r] \ar@{|->}[d] & \CF_2 \subset_x \CF_3 \subset_x \CF_4 \ar@{|->}[d]  \\
			\CF_0 \subset_x \CF_1 \subset_x \CF_2 \subset_x \CF_3\ar@{|->}[r] & \CF_2 \subset_x \CF_3}
$$
	
\end{proposition}
	
\begin{proposition}
\label{prop:xxyx and xyxx}

$\fZ_{(x,x,y,x)}$ and $\fZ_{(x,y,x,x)}$ are Cohen-Macaulay and irreducible of expected dimension. Moreover, the following fiber squares are derived:
		$$
		\xymatrix{\fZ_{(x,x,y,x)} \ar[r] \ar[d] & \fZ_{(x,y,x)} \ar[d]  \\
			\fZ_{(x,x)} \ar[r] & \fZ_{(x)}} \xymatrix{\CF_0 \subset_x \CF_1 \subset_x \CF_2 \subset_y \CF_3 \subset_x \CF_4 \ar@{|->}[r] \ar@{|->}[d] & \CF_1 \subset_x \CF_2 \subset_y \CF_3 \subset_x \CF_4 \ar@{|->}[d]  \\
			\CF_0 \subset_x \CF_1 \subset_x \CF_2 \ar@{|->}[r] & \CF_1 \subset_x \CF_2}
		$$
		$$
		\xymatrix{\fZ_{(x,y,x,x)} \ar[r] \ar[d] & \fZ_{(x,x)} \ar[d]  \\
			\fZ_{(x,y,x)} \ar[r] & \fZ_{(x)}} \qquad \xymatrix{\CF_0 \subset_x \CF_1 \subset_y \CF_2 \subset_x \CF_3 \subset_x \CF_4 \ar@{|->}[r] \ar@{|->}[d] & \CF_2 \subset_x \CF_3 \subset_x \CF_4 \ar@{|->}[d]  \\
			\CF_0 \subset_x \CF_1 \subset_y \CF_2 \subset_x \CF_3 \ar@{|->}[r] & \CF_2 \subset_x \CF_3}
		$$
		
\end{proposition}

\bigskip
	
\noindent Finally, we will need the normality of some of the varieties $\fZ_\lambda$. \\

	\begin{proposition}
		\label{prop:normal}
		
		The scheme $\fZ_\lambda$ is normal for any
		\begin{equation}
		\label{eqn:lambda}
		\lambda \in \Big\{ (x,y), (x,x,y), (x,y,x), (y,x,x), (x,x,y,x), (x,y,x,x) \Big\}
		\end{equation}
		
	\end{proposition}
	
	\medskip	
	
	\subsection{}
	\label{sub:def y}
	
	Let us consider the spaces $\fY$, $\fY_-$, $\fY_+$, $\fY_{-+}$ which parameterize diagrams
	\begin{equation}
	\label{eqn:y}
	\xymatrix{& \CF_1 \ar@{^{(}->}[rd]^{y} & \\
		\CF_0 \ar@{^{(}->}[ru]^{x} \ar@{^{(}->}[rd]_{y}
		& & \CF_2 \\
		& \CF_1' \ar@{^{(}->}[ru]_{x} &} 
	\end{equation}
	\begin{equation}
	\label{eqn:y minus}
	\xymatrix{& \CF_1 \ar@{^{(}->}[rd]^{y} & & \\
		\CF_0 \ar@{^{(}->}[ru]^{x} \ar@{^{(}->}[rd]_{y}
		& & \CF_2 \ar@{^{(}->}[r]^{x} & \CF_3 \\
		& \CF_1' \ar@{^{(}->}[ru]_{x} & &} 
	\end{equation}
	\begin{equation}
	\label{eqn:y plus}
	\xymatrix{& & \CF_2 \ar@{^{(}->}[rd]^{y} & \\
		\CF_0 \ar@{^{(}->}[r]^{x} & \CF_1 \ar@{^{(}->}[ru]^{x} \ar@{^{(}->}[rd]_{y}
		& & \CF_3 \\
		& & \CF_2' \ar@{^{(}->}[ru]_{x} &} 
	\end{equation}
	\begin{equation}
	\label{eqn:y bullet}
	\xymatrix{& & \CF_2 \ar@{^{(}->}[rd]^{y} & & \\
		\CF_0 \ar@{^{(}->}[r]^{x} & \CF_1 \ar@{^{(}->}[ru]^{x} \ar@{^{(}->}[rd]_{y}
		& & \CF_3 \ar@{^{(}->}[r]^{x} & \CF_4 \\
		& & \CF_2' \ar@{^{(}->}[ru]_{x} & &} 
	\end{equation}
	respectively, of stable coherent sheaves where each successive inclusion is colength 1 and supported at the point indicated on the diagram. Strictly speaking, $\fY$, $\fY_-$, $\fY_+$, $\fY_{-+}$ are functors which associate to a scheme $T$ diagrams of flat families of stable coherent sheaves \eqref{eqn:y}, \eqref{eqn:y minus}, \eqref{eqn:y plus}, \eqref{eqn:y bullet} on $T \times S$, satisfying all the standard properties. On these schemes, we have the line bundles with fibers
	$$
	\CL_i = \Gamma(S,\CF_i/\CF_{i-1})
	$$
	and where the notation is applicable, line bundles
	$$
	\CL_i' = \Gamma(S,\CF_i'/\CF_{i-1}) \quad \text{or} \quad \CL_i' = \Gamma(S,\CF_i/\CF_{i-1}')
	$$
	Note that
	\begin{align}
	&\CL_1\CL_2 = \CL_1'\CL_2' \in \text{Pic}(\fY), \text{Pic}(\fY_-) \label{eqn:picard relation 1} \\
	&\CL_2\CL_3 = \CL_2'\CL_3' \in \text{Pic}(\fY_+), \text{Pic}(\fY_{-+}) \label{eqn:picard relation 2}
	\end{align}
	
	\subsection{}
	
	Consider the maps
	\begin{equation}
	\label{eqn:pi top}
	\xymatrix{\fY \ar@{.>}[d]^-{\pi^\uparrow} \\ \fZ_{(x,y)}} \qquad \xymatrix{\fY_- \ar@{.>}[d]^-{\pi^\uparrow} \\ \fZ_{(x,y,x)}} \qquad \xymatrix{\fY_+ \ar@{.>}[d]^-{\pi^\uparrow} \\ \fZ_{(x,x,y)}} \qquad \xymatrix{\fY_{-+} \ar@{.>}[d]^-{\pi^\uparrow} \\ \fZ_{(x,x,y,x)}}
	\end{equation}
	obtained by remembering only the middle and top part of \eqref{eqn:y}--\eqref{eqn:y bullet}, and
	\begin{equation}
	\label{eqn:pi bot}
	\xymatrix{\fY \ar@{.>}[d]^-{\pi^\downarrow} \\ \fZ_{(y,x)}} \qquad \xymatrix{\fY_- \ar@{.>}[d]^-{\pi^\downarrow} \\ \fZ_{(y,x,x)}} \qquad \xymatrix{\fY_+ \ar@{.>}[d]^-{\pi^\downarrow} \\ \fZ_{(x,y,x)}} \qquad \xymatrix{\fY_{-+} \ar@{.>}[d]^-{\pi^\downarrow} \\ \fZ_{(x,y,x,x)}}
	\end{equation}
	obtained by remembering only the middle and bottom part of \eqref{eqn:y}--\eqref{eqn:y bullet}. In Proposition \ref{prop:representable y} we will show that the maps \eqref{eqn:pi top} and \eqref{eqn:pi bot} are representable, which together with Proposition \ref{prop:tower} shows that the functors $\fY$, $\fY_- ,\fY_+$, $\fY_{-+}$ are themselves representable. Let us first introduce some notation pertaining to $\fY$. The short exact sequence
	$$
	0 \longrightarrow \CU_1/\CU_0 = \Gamma^x_*(\CL_1) \longrightarrow \CU_2/\CU_0 \longrightarrow \CU_2/\CU_1 = \Gamma^y_*(\CL_2) \longrightarrow 0
	$$
	consists of coherent sheaves on $\fZ_2 \times S$ which are flat over $\fZ_2 = \fZ_{(x,y)}$. Here and in what follows, $\Gamma^x$ and $\Gamma^y$ denote the graphs of the maps
	$$
	p_S^x, p_S^y : \fZ_2 \rightarrow S
	$$
	which record the points $x$ and $y$, respectively. Let us write
	$$
	\fZ_2 \times S \xrightarrow{\pr} \fZ_2 
	$$
	for the standard projection. Then the short exact sequence
	\begin{equation}
	\label{eqn:ses push}
	0 \longrightarrow \CL_1 \longrightarrow \CE := \pr_*(\CU_2/\CU_0) \longrightarrow \CL_2 \longrightarrow 0
	\end{equation}
	consists of locally free sheaves of ranks $1,2,1$, respectively, on $\fZ_2$. The composition
	$$
	\pr^*(\CE) \otimes \CI_{\Gamma^y} \hookrightarrow \pr^*(\CE) \longrightarrow \CU_2/\CU_0 \longrightarrow \Gamma^y_*(\CL_2)
	$$
	vanishes (on account of $\CI_{\Gamma^y}$ being the ideal sheaf of functions which vanish on any coherent sheaf of the form $\Gamma^y_*(\dots)$), and therefore induces a map
	$$
	\pr^* (\CE) \otimes \CI_{\Gamma^y} \longrightarrow \CU_1/\CU_0 = \Gamma^x_*(\CL_1)
	$$
	By adjunction, this gives rise to a map
	$$
	\Gamma^{x*} ( \pr^* (\CE)) \otimes \Gamma^{x*}(\CI_{\Gamma^y}) \longrightarrow \CL_1
	$$
	which can be rewritten as
	\begin{equation}
	\label{eqn:the map}
	\CE \otimes (p_S^x \times p_S^y)^*(\CI_\Delta) \longrightarrow \CL_1
	\end{equation}
	where $\CI_\Delta$ is the ideal of the diagonal $\Delta : S \hookrightarrow S \times S$. Indeed, the identification $\Gamma^{x*}(\CI_{\Gamma^y}) \cong (p_S^x \times p_S^y)^*(\CI_\Delta)$ stems from the fiber square
	\begin{equation}
	\label{eqn:stem fiber}
	\xymatrix{\fZ_2 \ar@{^{(}->}[r]^-{\Gamma^y} \ar[d]_{p_S^y} & \fZ_2 \times S \ar[d]^{p_S^y \times \text{Id}_S} \\
		S \ar@{^{(}->}[r]^-\Delta & S \times S }
	\end{equation}
	Proposition \ref{prop:flat morphism} asserts that the vertical maps are flat, hence $\CI_{\Gamma^y} = (p_S^y \times \text{Id})^*(\CI_\Delta)$. \\
	
\noindent The preceding discussion applies equally well to the spaces $\fY_-$, $\fY_+$, $\fY_{-+}$ instead of $\fY$ (the sheaves $\CU_0, \CU_1, \CU_2, \CL_1, \CL_2$ must be replaced by $\CU_1, \CU_2, \CU_3, \CL_2, \CL_3$ in the case of $\fY_+$ and $\fY_{-+}$, see the notation in \eqref{eqn:y}, \eqref{eqn:y minus}, \eqref{eqn:y plus}, \eqref{eqn:y bullet}). \\
	
	\begin{proposition}
		\label{prop:representable y} 
		
		With the notation above, we have
		\begin{equation}
		\label{eqn:proj y}
		\xymatrixcolsep{3pc}\xymatrix{\fY \ar[rd]_-{\pi^\uparrow \emph{ or } \pi^\downarrow} \ar@{^{(}->}[r]^-{\iota^\uparrow \emph{ or } \iota^\downarrow} & \BP_{\fZ_2}(\CE) \ar[d]^\rho \\ & \fZ_2}
		\end{equation}
		where $\iota^\uparrow$ and $\iota^\downarrow$ are cut out by the following section:
		$$
		\sigma : \CH \otimes \rho^* \left( (p_S^x \times p_S^y)^*(\CI_\Delta) \right) \longrightarrow \rho^* \left(\CE \otimes (p_S^x \times p_S^y)^*(\CI_\Delta) \right) \xrightarrow{\eqref{eqn:the map}} \rho^*(\CL_1) 
		$$
		with $\CH = \emph{Ker } \rho^*(\CE) \twoheadrightarrow \CO(1)$. The same formulas hold for the spaces $\fY_-$, $\fY_+$, $\fY_{-+}$ instead of $\fY$, but replacing $\fZ_2$ by the corresponding spaces in \eqref{eqn:pi top} and \eqref{eqn:pi bot}. \\
	\end{proposition}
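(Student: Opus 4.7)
My plan is to follow the functor-of-points strategy of Proposition \ref{prop:tower}. Having represented $\fZ_2$ by the functor \eqref{eqn:functor z}, it will suffice to show that the closed subscheme of $\BP_{\fZ_2}(\CE)$ cut out by $\sigma$ represents the enlarged functor which associates to a scheme $T$ a $T$-point $(\CG_0 \subset \CG_1 \subset \CG_2)$ of $\fZ_2$ together with an additional subsheaf $\CG_0 \subset \CF_1' \subset \CG_2$. Here I write $\CG_i$ for the pullback of $\CU_i$ to $T \times S$, and $\Gamma^x, \Gamma^y$ for the graphs of the maps $x,y : T \to S$ induced by a given $\bar{\rho} : T \to \fZ_2$; let $\pr_T : T \times S \to T$ denote the projection.

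First I would translate the datum of a bottom flag into a rank-one subbundle of $\bar{\rho}^* \CE$. Given $\CF_1' \subset \CG_2$ with $\CF_1'/\CG_0 \cong \Gamma^y_*(\CL_1')$ and $\CG_2/\CF_1' \cong \Gamma^x_*(\CL_2')$, pushforward along $\pr_T$ of the inclusion $\CF_1'/\CG_0 \hookrightarrow \CG_2/\CG_0$ yields a rank-one subbundle $\CK := \pr_{T*}(\CF_1'/\CG_0) \hookrightarrow \bar{\rho}^* \CE$, equivalently (via \eqref{eqn:proj represent}) a lift of $\bar{\rho}$ to $\BP_{\fZ_2}(\CE)$ with quotient $\CL_2'$. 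Conversely, starting from a rank-one subbundle $\CK \subset \bar{\rho}^* \CE$ I would define $\CF_1'$ as the preimage in $\CG_2$ of the image of the counit map $\pr_T^* \CK \to \CG_2/\CG_0$, and then verify that $\CF_1'$ has the required structure precisely when $\sigma$ pulls back to zero on $T$.

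The heart of the argument is identifying the vanishing of $\sigma$ with the support condition that the counit $\pr_T^* \CK \to \CG_2/\CG_0$ factors through $\Gamma^y_*$ of a line bundle. For the forward direction, if $\CF_1'$ is given the map $\pr_T^* \CK \to \CG_2/\CG_0$ lands in $\Gamma^y_*(\CL_1')$, which is annihilated by $\CI_{\Gamma^y}$; tracing through the construction of \eqref{eqn:the map} shows that the pullback of $\sigma$ is zero. For the converse, $\sigma = 0$ says that $\pr_T^* \CK \otimes \CI_{\Gamma^y}$ maps to zero inside the subsheaf $\Gamma^x_*(\CL_1) \subset \CG_2/\CG_0$ coming from the top flag; on the open locus $x \neq y$ the ideal $\CI_{\Gamma^y}$ restricts to the unit ideal on $\Gamma^x$, so this forces $\pr_T^* \CK \to \CG_2/\CG_0$ to factor through $\Gamma^y_*(\CL_2)$, whence $\CF_1'$ has the correct length-one quotient at $\Gamma^y$. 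Stability of $\CF_1'$ is then automatic from stability of $\CG_0$ and $\CG_2$ via Proposition 5.5 of \cite{Univ}, exactly as in Proposition \ref{prop:tower}.

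The main obstacle is globalizing the converse across the diagonal $x = y$, where $\CG_2/\CG_0$ degenerates to a length-two sheaf with nontrivial local extension structure. Here I will lean on Proposition \ref{prop:flat morphism} (flatness of $p_S^x, p_S^y$) together with the identification $\Gamma^{x*}(\CI_{\Gamma^y}) \cong (p_S^x \times p_S^y)^*(\CI_\Delta)$ used in the construction of \eqref{eqn:the map} to check that the scheme-theoretic equation $\sigma = 0$ continues to cut out the correct subscheme, relying on the fact that flatness of $\CF_1'/\CG_0$ over $T$ is implied once the fibrewise length is constant. Once this is in place, the variants $\fY_-, \fY_+, \fY_{-+}$ require no new input: the flanking sheaves $\CF_3$, $\CF_0$, or both are already recorded by the enlarged base schemes $\fZ_{(x,y,x)}$, $\fZ_{(x,x,y)}$, or $\fZ_{(x,x,y,x)}$ via Proposition \ref{prop:tower}, and the middle-square argument above applies verbatim with $\fZ_2$ replaced by the appropriate flag scheme.
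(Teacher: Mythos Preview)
Your overall strategy matches the paper's: both arguments run via the functor of points, identify a $T$-point of $\BP_{\fZ_2}(\CE)$ with a sub-line-bundle $H \subset E = \pr_*(\CF_2/\CF_0)$, and then characterize when the induced map $\pr^*(H) \to \CF_2/\CF_0$ factors through $\Gamma^y_*(H)$ to produce $\CF_1'$. Where you diverge is in the converse direction, and this is where you create an unnecessary obstacle for yourself.

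You correctly observe that $\sigma = 0$ amounts to the vanishing of the map $\pr^*(H) \otimes \CI_{\Gamma^y} \to \Gamma^x_*(\CL_1)$, and you note that this map factors through the subsheaf $\Gamma^x_*(\CL_1) \subset \CF_2/\CF_0$. But you then retreat to the open locus $x \neq y$ and worry about ``globalizing across the diagonal''. This case split is not needed. The paper's point is that the composite
\[
\nu : \pr^*(H) \otimes \CI_{\Gamma^y} \hookrightarrow \pr^*(E) \otimes \CI_{\Gamma^y} \to \CF_2/\CF_0
\]
already has image contained in $\Gamma^x_*(\CL_1)$, because its composition with the projection $\pi : \CF_2/\CF_0 \twoheadrightarrow \Gamma^y_*(\CL_2)$ vanishes identically (the map $\pr^*(\CL_2) \otimes \CI_{\Gamma^y} \to \Gamma^y_*(\CL_2)$ is zero by construction). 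Hence $\nu = 0$ as a map into $\CF_2/\CF_0$ is \emph{equivalent} to $\nu = 0$ as a map into $\Gamma^x_*(\CL_1)$, and the latter is exactly $\sigma = 0$ via adjunction and the identification $\Gamma^{x*}(\CI_{\Gamma^y}) \cong (p_S^x \times p_S^y)^*(\CI_\Delta)$. This works uniformly over all of $T$; no separate treatment of the diagonal is required, and there is nothing to ``globalize''. Once you see this, your paragraph about the main obstacle disappears, and the proof for $\fY_-, \fY_+, \fY_{-+}$ follows as you say.
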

	
	\begin{proof} We will only prove the required statement for $\fY$, as the cases of $\fY_-$, $\fY_+$, $\fY_{-+}$ are analogous (equivalently, they follow from base change in the derived fiber squares \eqref{eqn:derived y 1}--\eqref{eqn:derived y 2}). Also, we will only prove the case of the map $\pi^\uparrow$, as the situation is symmetric by replacing $(\uparrow,x,y,\CL_1,\CL_2)$ with $(\downarrow,y,x,\CL_1',\CL_2')$. If we define the scheme $\fY$ by \eqref{eqn:proj y}, then maps $T \rightarrow \fY$ are in one-to-one correspondence with \\
		
		\begin{enumerate}[leftmargin=*]
			
			\item a map $\rho : T \rightarrow \fZ_2$, i.e. a flat family of stable coherent sheaves $(F_0 \subset F_1 \subset F_2)$ on $T \times S$, together with line bundles $L_1$, $L_2$ on $T$ and points $x,y : T \rightarrow S$ such that $F_1/F_0 \cong \Gamma^x_*(L_1)$ and $F_2/F_1 \cong \Gamma^y_*(L_2)$. \\
			
			\item a line bundle $\CO(1)$ on $T$ and a surjective homomorphism
			\begin{equation}
			\label{eqn:phi}
			E \twoheadrightarrow \CO(1)
			\end{equation}
			where $E = \pr_*(F_2/F_0)$ with $\pr : T \times S \rightarrow T$ the standard projection. If we let $H$ denote the kernel of \eqref{eqn:phi}, then one requires that the composition
			\begin{equation}
			\label{eqn:condition 1}
			H \otimes (x \times y)^*(\CI_\Delta) \rightarrow L_1
			\end{equation}
			be 0, where $x \times y : T \rightarrow S \times S$ is the product of the maps in item (1). \\
		\end{enumerate}
		
		\noindent We must show that any datum as in items (1)--(2) above gives rise to a square \eqref{eqn:y}. Item (1) yields the commutative diagram with exact rows and column
$$
\xymatrix{
			0 \ar[r] & \pr^*(L_1) \ar[r] \ar@{->>}[d] & \pr^*(E) \ar[r] \ar@{->>}[d] & \pr^*(L_2) \ar[r] \ar@{->>}[d] & 0 \\
			0 \ar[r] & \Gamma_*^x(L_1) \ar[r]^-\iota & F_2/F_0 \ar[r]^-\pi & \Gamma_*^y(L_2) \ar[r] & 0 }
$$
		on $T \times S$. The kernel of \eqref{eqn:phi} gives rise to a sub line bundle $H \subset E$, and the question is when $\Gamma^y_*(H)$ is a subsheaf of $F_2/F_0$ (this subsheaf would be $F_1'/F_0$, and this would complete the square \eqref{eqn:y} by constructing the bottom sheaf). It is easy to see that this happens precisely when the following composition vanishes:
		$$
		\nu : \pr^*(H) \otimes \CI_{\Gamma^y} \hookrightarrow \pr^*(E) \twoheadrightarrow F_2/F_0
		$$
Since $\pi \circ \nu = 0$, the map $\nu$ takes values in $\Gamma^x_*(L_1)$, so the question is when the map
		$$
		\pr^*(H) \otimes \CI_{\Gamma^y} \rightarrow \Gamma^x_*(L_1)
		$$
		vanishes. By adjunction, this happens when
		\begin{equation}
		\label{eqn:condition 2}
		H \otimes \Gamma^{x*}(\CI_{\Gamma^y}) \rightarrow L_1
		\end{equation}
		vanishes. Since $(x \times y)^*(\CI_\Delta) = \Gamma^{x*}(\CI_{\Gamma^y})$ (as a consequence of the sentence preceding diagram \eqref{eqn:stem fiber}), the vanishing of \eqref{eqn:condition 2} is equivalent to the vanishing of \eqref{eqn:condition 1}. 
		
	\end{proof}
	
	\subsection{} Now that we have defined the schemes $\fY$, $\fY_-$, $\fY_+$, $\fY_{-+}$, let us consider their basic properties. An important fact is the following geometric observation. \\
	
	\begin{proposition}
		\label{prop:zero}
		
		The composition $\CF_1/\CF_0 \hookrightarrow \CF_2/\CF_0 \twoheadrightarrow \CF_2/\CF_1'$ (see the notation in \eqref{eqn:y}) induces the following map of line bundles on $\fY$:
		\begin{equation}
		\label{eqn:zero}
		\CL_1 \rightarrow \CL_2'
		\end{equation}
		If we interpret this map as a section of $\CL_2' \otimes \CL^{-1}_1$, then its zero locus consists of
\begin{equation}
		\label{eqn:point zero}
		\Big\{ (\CF_1,x) = (\CF_1',y) \Big \} \hookrightarrow \fY,
\end{equation}
		and is isomorphic to $\fZ_{(x,x)}$. The analogous result holds for $\fY_-, \fY_+, \fY_{-+}$. \\
		
	\end{proposition}
	
	\begin{proof} By the functor-of-points description, a map from a scheme $T$ into the zero locus of \eqref{eqn:zero} consists of a square \eqref{eqn:y} (of coherent sheaves on $T \times S$, flat over $T$) such that the corresponding induced map
		\begin{equation}
		\label{eqn:fgh}
		\pr_*(\CF_1/\CF_0) \rightarrow \pr_*(\CF_2/\CF_1')
		\end{equation}
		vanishes, where $\pr : T \times S \rightarrow T$ denotes the projection (here we have used the facts that $\CF_1/\CF_0 \cong \Gamma^x_*(\CL_1)$, $\CF_2/\CF_1' \cong \Gamma^x_*(\CL_2')$ and $\pr \circ \Gamma^x = \text{Id}$). Because the sheaves $\CF_1/\CF_0$, and $\CF_2/\CF_1'$ are flat of length 1 over $T$, the map $\pr_*$ is an isomorphism on local sections, hence the induced map $\CF_1/\CF_0 \rightarrow \CF_2/\CF_1'$ vanishes. This implies that $\CF_1 \subset \CF_1'$ as subsheaves of $\CF_2$, hence we obtain short exact sequences
$$
0 \rightarrow \CF_1/\CF_0 \xrightarrow{f} \CF_1'/\CF_0 \xrightarrow{g} \CF_1'/\CF_1 \rightarrow 0
$$
$$
0 \rightarrow \CF_1'/\CF_1 \xrightarrow{f'} \CF_2/\CF_1 \xrightarrow{g'} \CF_2/\CF_1' \rightarrow 0
$$
The homomorphism $f$ is an injection $\Gamma^x_*(\CL_1) \rightarrow \Gamma^y_*(\CL_1')$, which implies that $x = y$ as maps $T \rightarrow S$ (the statement is local on both $S$ and $T$). Since $\Gamma^x$ is a closed embedding, $\Gamma^x_*$ is an exact functor and so the homomorphisms $f$ and $g'$ induce line bundle homomorphisms $\CL_1 \hookrightarrow \CL_1'$ and $\CL_2 \twoheadrightarrow \CL_2'$ on $T$. Since $\CL_1 \CL_2 \cong \CL_1'\CL_2' \cong \det \CE$ with $\CE = \pr_*(\CF_2/\CF_0)$, this is only possible if the aforementioned line bundle homomorphisms are isomorphisms (in other words, if $ab$ is a unit in a certain ring, then both $a$ and $b$ are units). Therefore, we must have $\CL_1 = \CL_1'$ as subsheaves of $\CE$, hence $\CF_1 = \CF_1'$ as subsheaves of $\CF_2$, which precisely yields a point of $\fZ_{(x,x)}$. The analogous results for $\fY_-, \fY_+, \fY_{-+}$ instead of $\fY$ are proved by the same argument.
		
	\end{proof}
	
	\subsection{} The following results will be proved in Section \ref{sec:geom}. The expected dimension of the varieties $\fY$, $\fY_-$, $\fY_+$, $\fY_{-+}$ is defined to be the dimension of the respective spaces on the bottom of \eqref{eqn:pi top} or \eqref{eqn:pi bot} (which is equal to the actual dimension of these spaces by Propositions \ref{prop:xy}, \ref{prop:xxy and yxx}, \ref{prop:xyx}, \ref{prop:xxyx and xyxx}). \\
	
	\begin{proposition}
	\label{prop:y}
	
$\fY$ is smooth and irreducible of expected dimension. \\

\end{proposition}

	\begin{proposition}
	\label{prop:y- and y+}
	
$\fY_-$ and $\fY_+$ are l.c.i. and irreducible of expected dimension. \\

\end{proposition}

	\begin{proposition}
	\label{prop:y-+}
	
$\fY_{-+}$ is l.c.i. and has two irreducible components of expected dimension. \\

\end{proposition}
	
\noindent Using the results above, we will prove the following fact in Section \ref{sec:geom}. \\	

\begin{proposition}
		\label{prop:reduced}
		
		The schemes $\fY$, $\fY_-$, $\fY_+$, $\fY_{-+}$ are reduced. \\
		
	\end{proposition}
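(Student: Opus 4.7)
The plan is as follows. Since $\fY$ is smooth by Proposition \ref{prop:y smooth}, it is automatically reduced, so the remaining task is to treat $\fY_-$, $\fY_+$ and $\fY_{-+}$. Proposition \ref{prop:derived fiber square y} realizes each of these three schemes as a local complete intersection over the smooth scheme $\fY$, so all three are Cohen--Macaulay. By Serre's criterion $R_0+S_1$, a Cohen--Macaulay scheme is reduced if and only if it is generically reduced, so it suffices to exhibit a dense open smooth subscheme in every irreducible component.

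For $\fY_-$ and $\fY_+$, Proposition \ref{prop:y dim} shows that there is a unique top-dimensional irreducible component over each connected component of $\CM$, namely the closure of the open locus where the two support points appearing in the diagrams \eqref{eqn:y minus}, \eqref{eqn:y plus} are distinct and all sheaves are locally free near those points. Over this open locus, the map $\fY \rightarrow \fZ_2$ is an isomorphism (the subsheaf $\CF_1'$ in \eqref{eqn:y} is uniquely determined by $\CF_0 \subset_x \CF_1 \subset_y \CF_2$ when $x \neq y$), and the projections $\fY_\pm \rightarrow \fY$ correspond to adjoining an extra sheaf at a point disjoint from the rest of the flag. \'Etale locally this becomes a product of smooth Quot-type constructions at distinct closed points on the smooth surface $S$, which is smooth. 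This yields generic reducedness, and hence reducedness, of $\fY_-$ and $\fY_+$.

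For $\fY_{-+}$, Proposition \ref{prop:y dim} produces two top-dimensional irreducible components $C_1,C_2$ over each component of $\CM$. The component $C_1$ is again the closure of the open locus $x \neq y$ with no defects, and is generically smooth by exactly the same argument as above. The second component $C_2$ lies above the irreducible component $V_1 \subset \fZ_{(x,x,x,x)}$ corresponding to the component $Z_1 \subset \comm_4$ cut out by $x_{32}=y_{32}=0$ (see the two bullets in Subsection \ref{sub:stack 4}). The main obstacle of the proof is to verify that $C_2$ is generically reduced, since it is not simply the closure of the open locus treated in the first component.

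To handle this last step, I would combine Propositions \ref{prop:representable y} and \ref{prop:derived fiber square y} to present $\fY_{-+}$ \'etale locally as the zero scheme of an explicit section of a line bundle inside a $\BP^1$-bundle over $\fY_-$, and then work in the matrix coordinates of Subsection \ref{sub:adhm} at the generic point of $V_1$. At such a generic point, the matrices $(X',Y')$ of the associated commuting pair are generic subject to $x_{32}=y_{32}=0$, and in particular lie in the smooth locus of $Z_1 \subset \comm_4$. A direct computation of the Jacobian of the local equations, entirely analogous to the coordinate computations carried out in the proofs of Propositions \ref{prop:cm} and \ref{prop:normal}, should show that $C_2$ is smooth at this generic point. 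Once generic smoothness of both $C_1$ and $C_2$ is established, the Cohen--Macaulay property and Serre's criterion give the reducedness of $\fY_{-+}$.
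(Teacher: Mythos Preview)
Your overall strategy matches the paper's: use the Cohen--Macaulay property (via Proposition \ref{prop:derived fiber square y}) to reduce to generic reducedness, and then treat each irreducible component separately. There is, however, a slip in your argument for $\fY_\pm$, and your handling of the second component of $\fY_{-+}$ is more laborious than necessary.

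For $\fY_\pm$ you claim that the projection $\fY_\pm \rightarrow \fY$ ``adjoins an extra sheaf at a point disjoint from the rest of the flag.'' This is false: in diagram \eqref{eqn:y minus} the extra sheaf $\CF_3$ is appended at the point $x$, which already occurs as the support of $\CF_1/\CF_0$ and of $\CF_2/\CF_1'$ in the square; the analogous remark applies to $\fY_+$. So your proposed \'etale-local factorization into smooth Quot pieces at distinct points does not go through as written. The paper's fix is cleaner and avoids any local computation: on the open locus $x \neq y$ the relevant map $\pi^\downarrow$ or $\pi^\uparrow$ of \eqref{eqn:pi top}--\eqref{eqn:pi bot} is an isomorphism onto $\fZ_{(y,x,x)}$ (for $\fY_-$) or $\fZ_{(x,x,y)}$ (for $\fY_+$), and these schemes are normal by Proposition \ref{prop:normal}, hence reduced. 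The same argument disposes of the first component of $\fY_{-+}$.

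For the second component $C_2$ of $\fY_{-+}$, your proposed Jacobian computation would ultimately reproduce the content of Subsection \ref{sub:stack 4}, but the paper packages it more efficiently. The key observation is that over a generic point of $V_1$ the quotient $\CF_3/\CF_1$ is split, so the fiber of $\pi^\uparrow : \fY_{-+} \rightarrow \fZ_{(x,x,y,x)}$ is an entire $\BP^1$; hence $C_2$ is locally isomorphic to $V_1 \times \BP^1$. Since the generic point of $V_1$ has $\CF_4$ locally free (Corollary \ref{cor:def 2}), $V_1$ is in turn locally isomorphic to $\CM \times Z_1/B$, and the generic reducedness of $Z_1 \subset \comm_4$ was already recorded at the end of Subsection \ref{sub:stack 4}. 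This replaces your Jacobian computation by a one-line appeal to an established fact.
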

	
\noindent Our main interest in the above geometric properties is the following result. \\
	
	\begin{proposition}
		\label{prop:birational}
		
		The map $\pi^\uparrow : \fY \rightarrow \fZ_2 = \fZ_{(x,y)}$ has the property that
		\begin{equation}
		\label{eqn:birational}
		R^i\pi^\uparrow_*(\CO_\fY) = \begin{cases} \CO_{\fZ_2} & \text{if } i = 0 \\ 0 & \text{if } i>0 \end{cases}
		\end{equation}
		The analogous properties hold for $\pi^\downarrow$. Moreover, the analogous properties hold with the scheme $\fY$ replaced by the schemes $\fY_-$, $\fY_+$, $\fY_{-+}$ of \eqref{eqn:pi top} and \eqref{eqn:pi bot}. \\
		
	\end{proposition}
	
	\begin{proof} Recall from Proposition \ref{prop:representable y} that $\pi^\uparrow = \rho \circ \iota^{\uparrow}$, where $\iota^{\uparrow}$ is a closed embedding and $\rho : \BP_{\fZ_2}(\CE) \twoheadrightarrow \fZ_2$ is a $\BP^1$--bundle. Therefore, we have a short exact sequence
		\begin{equation}
		\label{eqn:shorty}
		0 \rightarrow \text{Kernel} \rightarrow \CO_{\BP_{\fZ_2}(\CE)} \rightarrow \iota^{\uparrow}_*(\CO_{\fY}) \rightarrow 0
		\end{equation}
		of coherent sheaves on $\BP_{\fZ_2}(\CE)$. Because $\BP_{\fZ_2}(\CE)$ is a $\BP^1$--bundle over $\fZ_2$, we have
		\begin{align*}
		&R^i\rho_*(\CO_{\BP_{\fZ_2}(\CE)}) = 0 \qquad \forall i \geq 1 \\
		&R^i\rho_*(\text{Kernel}) = 0 \qquad \ \forall i \geq 2
		\end{align*}
(the latter equality would actually hold for any coherent sheaf on $\BP_{\fZ_2}(\CE)$). Therefore, in the long exact sequence in cohomology associated to \eqref{eqn:shorty}
		$$
\dots \rightarrow R^i \rho_*(\CO_{\BP_{\fZ_2}(\CE)}) \rightarrow R^i\rho_* \circ \iota^\uparrow_*(\CO_{\fY}) \rightarrow R^{i+1} \rho_*(\text{Kernel}) \rightarrow \cdots
		$$
		the spaces on the left and on the right are 0 for any $i \geq 1$. This implies \eqref{eqn:birational} for $i\geq 1$. As for the case $i = 0$, it follows from Stein factorization and the facts that \\
		
		\begin{enumerate}[leftmargin=*]
			
			\item $\fZ_2$ is normal (Proposition \ref{prop:normal}), \\
			
			\item $\fY$ is reduced (Proposition \ref{prop:reduced}), \\
			
			\item the map $\pi^\uparrow$ is proper (Proposition \ref{prop:representable y}) and all its fibers are either a point or $\BP^1$. \\
			
		\end{enumerate}
		
		\noindent Indeed, it is easy to observe that the fiber of $\pi^\uparrow$ above a closed point
		$$
		(\CF_0 \subset_x \CF_1 \subset_y \CF_2)
		$$
		is isomorphic to $\BP^1$ if $x = y$ and $\CF_2/\CF_0 \cong \BC_x^{\oplus 2}$, and is a point in all other cases.
		
	\end{proof}
	
\begin{remark}

All the results in Section \ref{sec:mod} hold with the moduli space $\CM$ replaced by the Hilbert schemes of points $\emph{Hilb}(S)$: all one needs to do is to replace stable sheaves $\CF$ by ideal sheaves $\CI$ everywhere. In particular, Proposition \ref{prop:birational} shows that the nested Hilbert scheme $\fZ_2 = \{(\CI_0 \subset \CI_1 \subset \CI_2)\}$ has rational singularities, and $\fY$ is the blow-up of its singular locus (see the proof of Proposition \ref{prop:y geom}). \\

\end{remark}
	
	\section{Derived categories}
	\label{sec:derived categories}
	
	\medskip
	
	\subsection{}
	\label{sub:derived categories}
	
	Recall that all our schemes are projective over $\BC$. Consider the following:
	\begin{align*}
	&D^b(\coh(X)) = \text{the bounded derived category of coherent sheaves on } X \\
	&\perf(X) = \text{the derived category of perfect complexes on } X
	\end{align*}
	We recall that a complex is perfect if it is quasi-isomorphic to a finite complex of locally free sheaves of finite rank on $X$ (this notion is usually called ``strictly perfect", although it is equivalent with the more general notion of ``perfect" on projective varieties). The natural inclusion functor
	\begin{equation} 
	\label{eqn:perf coh}
	\perf(X) \hookrightarrow D^b(\coh(X))
	\end{equation}
	is fully faithful, and is an equivalence if the scheme $X$ is smooth. We will write
	$$
	D_X = \perf(X) = D^b(\coh(X)) \qquad \text{if } X \text{ is smooth}
	$$
	The functors between derived categories associated to a morphism $f : X \rightarrow Y$ are \\
	
	\begin{enumerate}
		
		\item $D^b(\coh(X)) \xrightarrow{f_*} D^b(\coh(Y))$ if $f$ is proper \\
		
		\item $\perf(X) \xrightarrow{f_*} \perf(Y)$ if $f$ is proper and l.c.i. \\
		
		\item $D^b(\coh(Y)) \xrightarrow{f^*} D^b(\coh(X))$ if $f$ has finite Tor dimension \\
		
		\item $\perf(Y) \xrightarrow{f^*} \perf(X)$ if $f$ is arbitrary \\
		
	\end{enumerate}
	
	\noindent The existence of the push-forward in the second line is non-trivial, and we refer the reader to \cite{T} for an overview. As for the pull-back in the third line, we note that we always have a left derived functor
	$$
	D^-(\coh(Y)) \xrightarrow{f^*} D^-(\coh(X))
	$$
	on the derived categories of bounded above complexes of coherent sheaves over projective schemes. However, in order to ensure that the functor $f^*$ takes bounded complexes to bounded complexes, one needs a strong assumption, such as $f$ having finite Tor dimension, i.e. $\Tor_i^{\CO_Y-\text{mod}}(\CO_X,-) = 0$ for all $i$ large enough. \\
	
	\begin{proposition}
		\label{prop:base change}
		
		Consider a derived fiber square as in Definition \ref{def:lci}:
		\begin{equation}
		\label{eqn:base change}
		\xymatrix{Z' \ar@{^{(}->}[r]^{\iota'} \ar[d]_{\eta'} & X' \ar[d]^\eta \\
			Z \ar@{^{(}->}[r]^\iota & X}
		\end{equation}
		where $\eta$ is proper. Suppose that $Z \hookrightarrow X$ and $Z' \hookrightarrow X'$ are closed embeddings cut out by regular sections $\sigma$ and $\eta^*(\sigma)$ of locally free sheaves $W$ and $\eta^*(W)$, respectively. Then we have equivalences
		\begin{equation}
		\label{eqn:base change 1}
		\eta^* \iota_* \cong \iota'_* {\eta'}^* : \eperf(Z) \rightarrow \eperf(X')
		\end{equation}
		\begin{equation}
		\label{eqn:base change 2}
\qquad \qquad	\iota^* \eta_* \cong {\eta'}_* {\iota'}^*  : D^b(\ecoh(X')) \rightarrow D^b(\ecoh(Z))
		\end{equation}
		The same equivalences hold if the regular embeddings $\iota$ and $\iota'$ are replaced by smooth morphisms such as the projectivization of a locally free sheaf coming from $X$. \\
		
	\end{proposition}
	
	\begin{proof} The equivalences \eqref{eqn:base change 1} and \eqref{eqn:base change 2} are well-known to hold in the derived category of unbounded complexes, as long as the maps $\eta$ and $\iota$ are Tor independent:
		\begin{equation} 
		\label{eqn:tor independent}
		\tor_i^{\CO_{X,x}-\text{mod}}(\CO_{Z,z}, \CO_{X',x'}) = 0 \qquad \forall \ i \geq 1
		\end{equation}
		for all points $z \in Z$ and $x' \in X'$ which map to the same point $x \in X$. Since $\iota$ is a regular embedding, we may replace $\CO_{Z,z}$ by the Koszul complex of the section $\sigma^\vee : W^\vee_x \rightarrow \CO_{X,x}$, and so \eqref{eqn:tor independent} is equivalent to the complex
		$$
		\left[ \dots \xrightarrow{\sigma^\vee} \wedge^2 (W^\vee_x) \otimes_{\CO_{X,x}} \CO_{X',x'} \xrightarrow{\sigma^\vee} W^\vee_x \otimes_{\CO_{X,x}} \CO_{X',x'} \xrightarrow{\sigma^\vee} \CO_{X',x'} \right]
		$$
		being exact everywhere except at the right-most place. However, the above is none other than the Koszul complex of the section $\eta^*(\sigma)$ of the locally free sheaf $\eta^*(W)$, and its exactness follows from our assumption that the section $\eta^*(\sigma)$ is regular. \\
		
		\noindent Having established \eqref{eqn:base change 1} and \eqref{eqn:base change 2} at the level of unbounded complexes, the desired conclusion follows from the fact that $\perf$ and $D^b(\coh)$ are full subcategories of the derived category of unbounded complexes, together with the fact that the pull-back and push-forward functors associated to $\iota, \iota', \eta, \eta'$ are well-defined on these subcategories due to items (1)--(4) that precede the statement of Proposition \ref{prop:base change}.
		
	\end{proof}
	
	\subsection{} 
	\label{sub:k-theory}
	
	Associated to a scheme $X$, we have the $K$--theory groups
	\begin{align*}
	&K_0(X) = \text{Grothendieck group of } D^b(\coh(X)) \\
	&K^0(X) = \text{Grothendieck group of } \perf(X)
	\end{align*}
	If $X$ is smooth, these groups are isomorphic and we will write
	$$
	K_X = K_0(X) = K^0(X) \qquad \text{if } X \text{ is smooth}
	$$
	Associated to a morphism $f : X \rightarrow Y$, we have group homomorphisms $f_*$, $f^*$ between the various $K$--theory groups, as in the items (1)--(4) of Subsection \ref{sub:derived categories}. Moreover, \eqref{eqn:base change 1} and \eqref{eqn:base change 2} yield equalities of maps between $K$--theory groups, under the assumptions in Proposition \ref{prop:base change}. However, the following only holds in $K$--theory.  \\
	
	\begin{proposition}
		\label{prop:base change excess}
		
(\cite[Proposition 2.2]{FN}) Consider a derived fiber square with excess, as in Definition \ref{def:excess square}. Then
		\begin{equation}
		\label{eqn:base change excess}
\iota^* \eta_* = \eta'_* \left( \wedge^\bullet(L) \cdot {\iota'}^* \right) : K_0(X') \rightarrow K_0(Z)		
		\end{equation}
		where $\wedge^\bullet(L) = \sum_{i=0}^{\emph{rank } L} (-1)^i [\wedge^i(L)]$ for the excess bundle $L$. \\
		
	\end{proposition}
	
	\noindent Equality \eqref{eqn:base change excess} is called the ``excess intersection formula", and it does not generally lift to the derived category. One reason for this is that formula \eqref{eqn:excess} does not naturally give rise to a map $s : L \rightarrow \CO$ whose Koszul complex categorifies $\wedge^\bullet(L)$. The case when \eqref{eqn:base change excess} lifts to the derived category, with the section being $s = 0$, is treated in \cite{ACH} and linked with the relation between $Z'$ and the derived fiber product $Z \times_X^L X'$. \\
	
	\subsection{}
	\label{sub:proof}
	
	Let us recall the schemes $\fZ_1$ and $\fZ_2^\bullet$ of Subsection \ref{sub:schemes}. It was shown in \cite{Univ, W univ} that these schemes are smooth (see also Propositions \ref{prop:x} and \ref{prop:xx}). Because of Proposition \ref{prop:tower}, the maps $p_{\pm}$ and $\pi_{\pm}$ of Subsection \ref{sub:main intro} are all proper and l.c.i., which allow us to define functors
	\begin{equation}
	\label{eqn:category e}
	\te_{(d_1,\dots,d_n)} : D_{\CM} \rightarrow D_{\CM \times S}
	\end{equation}
for all $d_1,\dots,d_n \in \BZ$, according to the composition immediately after \eqref{eqn:def functor}. \\

	\begin{remark} 
		
		Analogously, one can define the transposed functor
		\begin{equation}
		\label{eqn:category f}
		\tf_{(d_1,\dots,d_n)} : D_{\CM} \rightarrow D_{\CM \times S}
		\end{equation}
		by the composition going the other way (left-to-right):
		$$
		\xymatrix{D_{\fZ_1} \ar[r]^{\otimes \CL^{d_1-r}} & D_{\fZ_1} \ar[r]^-{\pi_{-*} \pi_+^*} & D_{\fZ_1} \ar[r]^-{\otimes \CL^{d_2-r}} & D_{\fZ_1} \quad \dots \quad D_{\fZ_1} \ar[r]^-{\pi_{-*} \pi_+^*} & D_{\fZ_1} \ar[r]^-{\otimes \CL^{d_n-r}} & D_{\fZ_1}   \ar[d]_{(p_- \times p_S)_*} \\
			D_{\CM} \ar[u]_{p_+^*} & & & & & D_{\CM \times S} }
		$$
		followed by tensoring with $\left(\omega_S^{\otimes (1-r)}\det \CU \right)^{\otimes n}$ and shifting complexes by $rn$. \\
		
	\end{remark}

	\noindent In Section \ref{sec:a infty acts}, we will show that the functors $\te_{(d_1,\dots,d_n)}$ categorify the elliptic Hall algebra introduced in \cite{BS}. Our main tool is the commutation relation between the functors \eqref{eqn:category e} that we stipulated in Theorem \ref{thm:main}. To set up the relation, let us consider the compositions (we will often write $\te_k = \te_{(k)}$)
	\begin{align}
	& D_{\CM} \xrightarrow{\te_k} D_{\CM \times S_2} \xrightarrow{\te_{(d_1,...,d_n)} \times \text{Id}_{S_2}} D_{\CM \times S_1 \times S_2} \label{eqn:composition 1} \\
	& D_{\CM} \xrightarrow{\te_{(d_1,...,d_n)}} D_{\CM \times S_1} \xrightarrow{\te_k \times \text{Id}_{S_1}} D_{\CM \times S_1 \times S_2} \label{eqn:composition 2} 
	\end{align} 
	and denote them by $\te_{(d_1,\dots,d_n)} \circ \te_k$ and $\te_k \circ \te_{(d_1,\dots,d_n)}$, respectively. In formulas \eqref{eqn:composition 1} and \eqref{eqn:composition 2}, we set $S_1 = S_2 = S$, but we use different notations for the two copies of $S$ to emphasize the fact that the functor $\te_{(d_1,\dots,d_n)}$ takes values in the first factor of $S \times S$, while the functor $\te_k$ takes values in the second factor. \\
	
	\begin{proof} \emph{of Theorem \ref{thm:main}:} We will begin with the case $n=1$, then $n=2$ and finally general $n$. The fact that the fiber square \eqref{eqn:square xy} is derived, combined with Proposition \ref{prop:base change}, implies that the compositions $\te_d \circ \te_k$ and $\te_k \circ \te_d$ are given by the following correspondences, respectively:
		$$
		\xymatrix{& \CL_1^d \CL_2^k \ar@{.>}[d] & \\
			& \fZ_{(x,y)} \ar[rd]^-{p_-} \ar[ld]_-{p_+ \times p_S^x \times p_S^y} & \\
			\CM \times S \times S & & \CM} \qquad \qquad \xymatrix{& \CL_1^k \CL_2^d \ar@{.>}[d] & \\
			& \fZ_{(y,x)} \ar[rd]^-{p_-} \ar[ld]_-{p_+ \times p_S^x \times p_S^y} & \\
			\CM \times S \times S & & \CM}
		$$
		Above, we write $p_S^x, p_S^y$ for the maps $\fZ_2 \rightarrow S$ that remember the points $x,y$, respectively. The way to interpret the diagrams above as functors is the following: start from $D_{\CM}$, pull-back to $\fZ_2$, tensor by the line bundle on top, and then push-forward to $D_{\CM \times S \times S}$. For example, in the case of the diagram on the left, $\te_d \circ \te_k$ equals
		$$
		D_{\CM \times S \times S} \xleftarrow{(p_+ \times p_S^x \times p_S^y)_*} D^b(\coh(\fZ_{(x,y)})) \xleftarrow{\otimes \CL_1^d\CL_2^k} \perf(\fZ_{(x,y)}) \xleftarrow{p_-^*} D_\CM
		$$	
		Note that the middle arrow consists of tensoring with a line bundle, followed by the fully faithful map \eqref{eqn:perf coh}. This aspect is a necessary technicality, because the scheme $\fZ_2$ is not smooth (in the case at hand, we could have gotten away with only using the category $D^b(\coh)$ since the map $p_-$ is an l.c.i. morphism, but we will apply the notation above in situations where the morphism which plays the role of $p_-$ will not be l.c.i.). Consider the scheme $\fY$, together with the maps \eqref{eqn:pi top} and \eqref{eqn:pi bot}:
		$$
		\xymatrix{& \fY \ar[ld]_{\pi^\uparrow} \ar[rd]^{\pi^\downarrow} & \\ \fZ_{(x,y)} & & \fZ_{(y,x)}}
		$$
		Because of Proposition \ref{prop:birational}, we have $R^\bullet \pi^\uparrow_*(\CO_\fY) = \CO_{\fZ_{(x,y)}}$ and $R^\bullet \pi^\downarrow_*(\CO_\fY) = \CO_{\fZ_{(y,x)}}$ in the derived category. Therefore, the compositions $\te_d \circ \te_k$ and $\te_k \circ \te_d$ can be alternatively given by the following correspondences:
		$$
		\xymatrix{& \CL_1^d \CL_2^k \ar@{.>}[d] & \\
			& \fY \ar[rd] \ar[ld] & \\
			\CM \times S \times S & & \CM} \qquad \qquad \xymatrix{& {\CL_1'}^k {\CL_2'}^d \ar@{.>}[d] & \\
			& \fY \ar[rd] \ar[ld] & \\
			\CM \times S \times S & & \CM}
		$$
		(the map that points left, respectively right, remembers the sheaf $\CF_0$, respectively $\CF_2$ in the notation of \eqref{eqn:y}). Recall from \eqref{eqn:picard relation 1} the identity $\CL_1\CL_2 = \CL_1'\CL_2'$ on $\fY$. The map of line bundles $\CL_1 \rightarrow \CL_2'$ of \eqref{eqn:zero} gives rise to maps
		\begin{equation}
		\label{eqn:cases}
		\begin{cases}
		\CL_1^d \CL_2^k \rightarrow {\CL_1'}^k {\CL_2'}^d & \text{if } d > k \\
		\CL_1^d \CL_2^k \leftarrow {\CL_1'}^k {\CL_2'}^d & \text{if } d < k \\
		\CL_1^d \CL_2^k \cong {\CL_1'}^k {\CL_2'}^d & \text{if } d = k
		\end{cases}
		\end{equation}
		which induces a natural transformation between the functors $\te_d \circ \te_k$ and $\te_k \circ \te_d$, as required in item (2) of Theorem \ref{thm:main}. In general, the cone of a composition of maps in any triangulated category has a filtration with associated graded given by the sum of the cones of the individual maps \footnote{The analogous statement in an abelian category would be that, given a flag of objects $V_0 \subset \dots \subset V_n$, the quotient $V_n/V_0$ has a filtration with associated graded given by the sum of the individual quotients $V_i/V_{i-1}$.}. Thus, the cone of the maps denoted by $\rightarrow$ and $\leftarrow$ in \eqref{eqn:cases} has a filtration with associated graded given by the sum of the cones
		$$
		\begin{cases}
		\Big( \CL_1^{a+1-k} {\CL_2'}^{d-a-1} \rightarrow \CL_1^{a-k} {\CL_2'}^{d-a} \Big) \otimes (\CL_1\CL_2)^k & \text{with } a \in \{k,\dots,d-1\} \text{ if } d > k \\
		\Big( {\CL_1'}^{a-d} \CL_2^{k-a} \leftarrow {\CL_1'}^{a-d+1} \CL_2^{k-a-1} \Big) \otimes (\CL_1\CL_2)^d & \text{with } a \in \{d,\dots,k-1\} \text{ if } d < k 
		\end{cases}
		$$
		By Proposition \ref{prop:zero}, the cone of the maps denoted by $\rightarrow$ and $\leftarrow$ above is precisely the line bundle $\CL_1^a \CL_2^{d+k-a}$ on the scheme $\fZ_{(x,x)}$, where $a$ goes over the indexing sets featured in either of the two situations above. The correspondence
		$$
		\xymatrix{& & \CL_1^a \CL_2^{d+k-a} \ar@{.>}[d] & \\
			& & \fZ_{(x,x)} \ar[rd]^-{p_-} \ar[ld]_-{p_+ \times p_S^x} & \\
			\CM \times S \times S & \CM \times S \ar@{_{(}->}[l]_-{\Delta} & & \CM}
		$$
		is precisely $\Delta_*(\te_{(a,d+k-a)})$, and this implies item (3) of Theorem \ref{thm:main}. The proof of Theorem \ref{thm:main} in the case $n=1$ is now complete. \\
		
		\noindent Let us now deal with the case $n=2$. The fact that the fiber squares in Proposition \ref{prop:xxy and yxx} are derived, combined with Proposition \ref{prop:base change}, implies that the compositions $g_0 = \te_{(d_1,d_2)} \circ \te_k$ and $g_2 = \te_k \circ \te_{(d_1,d_2)}$ are given by the following correspondences, respectively:
		$$
		\xymatrix{& \CL_1^{d_1} \CL_2^{d_2} \CL_3^k \ar@{.>}[d] & \\
			& \fZ_{(x,x,y)} \ar[rd]^-{p_-} \ar[ld]_-{p_+ \times p_S^x \times p_S^y} & \\
			\CM \times S \times S & & \CM} \qquad \xymatrix{& \CL_1^k \CL_2^{d_1} \CL_3^{d_2} \ar@{.>}[d] & \\
			& \fZ_{(y,x,x)} \ar[rd]^-{p_-} \ar[ld]_-{p_+ \times p_S^x \times p_S^y} & \\
			\CM \times S \times S & & \CM}
		$$
The maps denoted by $p_+$ and $p_-$ only remember the sheaves $\CF_0$ and $\CF_3$, respectively, from points $(\CF_0 \subset \dots \subset \CF_3)$ of either $\fZ_{(x,x,y)}$ or $\fZ_{(y,x,x)}$. We will also consider the functor $g_1 : D_\CM \rightarrow D_{\CM \times S \times S}$ given by the following correspondence
		$$
		\xymatrix{& \CL_1^{d_1} \CL_2^k \CL_3^{d_2}  \ar@{.>}[d] & \\
			& \fZ_{(x,y,x)} \ar[rd]^-{p_-} \ar[ld]_-{p_+ \times p_S^x \times p_S^y} & \\
			\CM \times S \times S & & \CM}
		$$
		Because of Proposition \ref{prop:birational}, the functors $g_0$ and $g_1$ can be given by the following correspondences, respectively:
		$$
		\xymatrix{& \CL_1^{d_1} \CL_2^{d_2} \CL_3^k \ar@{.>}[d] \\
			& \fY_+ \ar[ld] \ar[d] \\
			\CM \times S \times S & \CM} \qquad \text{and} \qquad \xymatrix{ \CL_1^{d_1} {\CL_2'}^{k} {\CL_3'}^{d_2}  \ar@{.>}[d] & \\
			\fY_+ \ar[d] \ar[rd] & \\
			\CM \times S \times S & \CM}
		$$
		while the functors $g_1$ and $g_2$ can be given by the following correspondences, respectively:
		$$
		\xymatrix{& \CL_1^{d_1} \CL_2^k \CL_3^{d_2} \ar@{.>}[d] \\
			& \fY_- \ar[ld] \ar[d] \\
			\CM \times S \times S & \CM} \qquad \text{and} \qquad \xymatrix{ {\CL_1'}^{k} {\CL_2'}^{d_1} \CL_3^{d_2}  \ar@{.>}[d] & \\
			\fY_- \ar[d] \ar[rd] & \\
			\CM \times S \times S & \CM}
		$$
		Depending on whether $d_2 > k$ or $d_2 < k$ or $d_2 = k$, we obtain natural transformations $g_0 \rightarrow g_1$ or $g_0 \leftarrow g_1$ or $g_0 \cong g_1$ by using the map \eqref{eqn:zero} on the space $\fY_+$ (cf. \eqref{eqn:cases}). Similarly, depending on whether $d_1 > k$ or $d_1 < k$ or $d_1 = k$, we obtain natural transformations $g_1 \rightarrow g_2$ or $g_1 \leftarrow g_2$ or $g_1 \cong g_2$ by using the map \eqref{eqn:zero} on the space $\fY_-$. This establishes item (2) of Theorem \ref{thm:main}. To prove item (3), one needs to proceed as in the $n=1$ case, and we will present the detailed argument for the natural transformation $g_0 \rightarrow g_1$ in the case $d_2 > k$. The cone of this natural transformation arises from the cone of the map of line bundles
		$$
		\CL_1^{d_1} \otimes \Big(\CL_2^{d_2} \CL_3^k \rightarrow {\CL_2'}^{k} {\CL_3'}^{d_2} \Big)
		$$
		on $\fY_+ $, which in turn has a filtration whose associated graded object is the direct sum of the cones
		\begin{equation}
		\label{eqn:cone}
		\CL_1^{d_1} \CL_2^a \CL_3^k {\CL_3'}^{d_2-a} \otimes \left(\frac {\CL_2}{\CL_3'} \rightarrow \CO \right) \qquad \text{as } a \in \{k,\dots,d_2-1\}
		\end{equation}
		The analogue of Proposition \ref{prop:zero} for $\fY_+$ instead of $\fY$ implies that the cone of the map \eqref{eqn:cone} is the structure sheaf of the subscheme $\fZ_{(x,x,x)} \hookrightarrow \fY_+$ given by the condition $x=y$ and $\CF_2 = \CF_2'$. Therefore, the cone in \eqref{eqn:cone} is the correspondence $\CL_1^{d_1} \CL_2^{a} \CL_3^{d_2+k-a}$ on $\fZ_{(x,x,x)}$. Since the fiber square in Proposition \ref{prop:xxx} is derived, the functor $D_\CM \rightarrow D_{\CM \times S \times S}$ induced by this correspondence is $\Delta_*(\te_{(d_1,a,d_2+k-a)})$, precisely as stipulated by item (3) of Theorem \ref{thm:main}. \\
		
		\noindent Now that we have established Theorem \ref{thm:main} for $n=1$ and $n=2$, let us tackle the case of general $n$. Consider the functors $g_0 = \te_{(d_1,\dots,d_n)} \circ \te_k$ and $g_n = \te_k \circ \te_{(d_1,\dots,d_n)}$. For any $i \in \{1,\dots,n-1\}$ let $g_i$ denote the following composition:

		$$
		\xymatrix{& & & & D_{\CM} \ar[d]^{p_-^*} \\
			& \ar[dl]_{\rho_-^*} D_{\fZ_1} & D_{\fZ_1} \ar[l]_-{\otimes \CL^{d_{n-i+1}}} \ \dots \ D_{\fZ_1} & D_{\fZ_1} \ar[l]_-{\pi_{+*} \pi_-^*} & D_{\fZ_1} \ar[l]_{\otimes \CL^{d_n}} \\
			\perf(\fZ_{(x,y,x)}) \ar[drrrr]^{\otimes \CL_{\text{mid}}^k} & & & & \\
			& & & & D^b(\coh(\fZ_{(x,y,x)})) \ar[ld]_{\rho_{+*}} \\
			D_{\fZ_1} \ar[d]^{(p_+ \times p_S)_*} & D_{\fZ_1} \ar[l]_{\otimes \CL^{d_1}} & D_{\fZ_1} \ar[l]_-{\pi_{+*} \pi_-^*} \ \dots \ D_{\fZ_1} & \ar[l]_-{\otimes \CL^{d_{n-i}}} D_{\fZ_1} & \\
			D_{\CM \times S} & & & &}
		$$
		where we consider the maps $\rho_\pm$ and the line bundle $\CL_{\text{mid}}$ as in the following diagram:
		$$
		\xymatrix{& \CL_{\text{mid}} \ar@{.>}[d] & \\ & \fZ_{(x,y,x)} \ar[ld]_{\rho_+} \ar[rd]^{\rho_-} & \\
			\fZ_1 & & \fZ_1} \qquad \xymatrix{& \Gamma(S,\CF_2/\CF_1) \ar@{.>}[d] & \\ & \CF_0 \subset_x \CF_1 \subset_y \CF_2 \subset_x \CF_3 \ar@{|->}[ld] \ar@{|->}[rd] & \\
			\CF_0 \subset_x \CF_1 & & \CF_2 \subset_x \CF_3}
		$$
		The required statements about the natural transformations $g_0 \leftrightarrow g_1$ and $g_{n-1} \leftrightarrow g_n$ are proved just like in the case $n=2$ in the previous paragraph. Similarly, the required statements about the natural transformations $g_{i-1} \leftrightarrow g_i$ are all proved just like in the case $n=3$ and $i=2$, which we will now explain. According to the diagram above, $g_1$ is given by the correspondence
		
	$$
		\xymatrix{& & & \CL_{\text{mid}}^k \ar@{.>}[d] & \\
			\CL^{d_1} \ar@{.>}[d] & \fZ_{(x,x)} \ar[ld]_{\pi_+} \ar[rd]^{\pi_-} & \CL^{d_2} \ar@{.>}[d] & \fZ_{(x,y,x)} \ar[ld]_{\rho_+} \ar[rd]^{\rho_-} & \CL^{d_3} \ar@{.>}[d] \\
			\fZ_1 \ar[d]_{p_+ \times p_S^x \times p_S^y} & & \fZ_1 & & \fZ_1 \ar[d]^{p_-} \\
			\CM \times S \times S & & & & \CM}
		$$	
One should read the diagram above as inducing a functor $D_\CM \rightarrow D_{\CM \times S \times S}$ by composing the pull-back and push-forward functors corresponding to the solid arrows, read right-to-left, all the while at every step tensoring by the line bundle which is displayed with a dotted line above every space. Similarly, $g_2$ is given by the correspondence
		$$
		\xymatrix{& \CL_{\text{mid}}^k \ar@{.>}[d] & & & \\
			\CL^{d_1} \ar@{.>}[d] & \fZ_{(x,y,x)} \ar[ld]_{\rho_+} \ar[rd]^{\rho_-} & \CL^{d_2} \ar@{.>}[d] & \fZ_{(x,x)} \ar[ld]_{\pi_+} \ar[rd]^{\pi_-} & \CL^{d_3} \ar@{.>}[d] \\
			\fZ_1 \ar[d]_{p_+ \times p_S^x \times p_S^y} & & \fZ_1 & & \fZ_1 \ar[d]^{p_-} \\
			\CM \times S \times S & & & & \CM}
		$$
		Because the fiber squares in Proposition \ref{prop:xxyx and xyxx} are derived, we may invoke Proposition \ref{prop:base change} in order to conclude that $g_1$ and $g_2$ are given by the following correspondences:
		$$
		\xymatrix{& \CL_1^{d_1} \CL_2^{d_2} \CL_3^{k} \CL_4^{d_3} \ar@{.>}[d] \\
			& \fZ_{(x,x,y,x)} \ar[ld] \ar[d] \\
			\CM \times S \times S & \CM} \qquad \text{and} \qquad \xymatrix{   \CL_1^{d_1} \CL_2^{k} \CL_3^{d_2} \CL_4^{d_3} \ar@{.>}[d] & \\
			\fZ_{(x,y,x,x)} \ar[d] \ar[rd] & \\
			\CM \times S \times S & \CM}
		$$
		respectively. However, Proposition \ref{prop:birational} applied to $\fY_{-+}$ implies that we may replace the line bundles above on $\fZ_{(x,x,y,x)}$ and $\fZ_{(x,y,x,x)}$ by their pullbacks to $\fY_{-+}$ without changing the correspondences. Therefore, the required natural transformation between $g_1$ and $g_2$ is induced by a map of line bundles
		$$
\CL_1^{d_1} \CL_2^{d_2} \CL_3^{k} \CL_4^{d_3}	 \qquad \text{and} \qquad \CL_1^{d_1} {\CL_2'}^{k} {\CL_3'}^{d_2} \CL_4^{d_3}
		$$
		on $\fY_{-+}$. The construction of this map, as well as the proof of the fact that its cone has the properties stipulated in item (3) of Theorem \ref{thm:main}, are achieved by using the map \eqref{eqn:zero} in the same way as we have already seen in the cases $n=1$ and $n=2$. 
		
	\end{proof}
	
	\subsection{} 
	\label{sub:k theory}
	
	At the level of $K$--theory, the functors \eqref{eqn:category e} and \eqref{eqn:category f} give rise to maps
	\begin{equation}
	\label{eqn:e in k-theory}
	e_{(d_1,\dots,d_n)} : \km \rightarrow \kms
	\end{equation}
	\begin{equation}
	\label{eqn:f in k-theory}
	f_{(d_1,\dots,d_n)} : \km \rightarrow \kms
	\end{equation}
	for all $d_1,\dots,d_n \in \BZ$. We also consider the power series of maps
	\begin{equation}
	\label{eqn:h in k-theory}
	h^\pm(z) = \sum_{k = 0}^\infty \frac {h_k^\pm}{z^{\pm k}} : \km \rightarrow \kms[[z^{\mp 1}]]
	\end{equation}
	defined as the composition
	$$
h^\pm(z) :	\km \xrightarrow{\text{pull-back}} \kms \xrightarrow{\text{multiplication by } \wedge^\bullet \left( \CU (q^{-1}-1)/z \right)} \kms
	$$
Above, recall that $\CU$ is the universal sheaf on $\CM \times S$, and $q = [\omega_S] \in \ks$ (we abuse notation and use the symbol $q$ also for the pull-back of the canonical line bundle to $\kms$). The meaning of the wedge power is the following. Proposition \ref{prop:length 1} allows one to write $[\CU] = [\CV] - [\CW] \in \kms$, where $\CV$ and $\CW$ are locally free sheaves. Since the total exterior power is supposed to be multiplicative, we define
	$$
	\wedge^\bullet \left( \frac {\CU(q^{-1}-1)}z\right) = \frac {\wedge^\bullet \left( \frac {\CV}{zq} \right)\wedge^\bullet \left( \frac {\CW}z \right)}{\wedge^\bullet \left( \frac {\CV}z \right)\wedge^\bullet \left( \frac {\CW}{zq} \right)} \in \kms(z), \text{ where } \wedge^\bullet \left( \frac {\CV}z \right) = \sum_{i=0}^{\text{rank }\CV} \frac {[\wedge^i\CV]}{(-z)^i}
	$$
	We showed in \cite{Univ} that the operators \eqref{eqn:e in k-theory}, \eqref{eqn:f in k-theory}, \eqref{eqn:h in k-theory} satisfy the relations
	\begin{equation}
	\label{eqn:di 1}
	\wedge^\bullet\left( - \frac wz \cdot \CO_\Delta \right) \left(\sum_{k=-\infty}^{\infty} \frac {e_k}{z^k} \right) h^\pm(w) = \wedge^\bullet\left( - \frac zw \cdot \CO_\Delta \right) h^\pm(w) \left(\sum_{k=-\infty}^{\infty} \frac {e_k}{z^k} \right)
	\end{equation}
	\begin{equation}
	\label{eqn:di 2}
	\wedge^\bullet\left( - \frac zw \cdot \CO_\Delta \right) \left(\sum_{k=-\infty}^{\infty} \frac {f_k}{z^k} \right) h^\pm(w) = \wedge^\bullet\left( - \frac wz \cdot \CO_\Delta \right) h^\pm(w) \left(\sum_{k=-\infty}^{\infty} \frac {f_k}{z^k} \right)
	\end{equation}
	\begin{equation}
	\label{eqn:di 3}
	\left[ \sum_{k=-\infty}^{\infty} \frac {e_k}{z^k}, \sum_{l=-\infty}^{\infty} \frac {f_l}{w^l} \right] = \left(\sum_{k=-\infty}^{\infty} \frac {z^k}{w^k} \right) \Delta_* \left(\frac {h^+(z) - h^-(w)}{1-q^{-1}} \right)
	\end{equation}
where $\CO_\Delta$ is the $K$--theory class of the diagonal inside $S \times S$. We will now work out some relations among the operators \eqref{eqn:e in k-theory} and \eqref{eqn:f in k-theory}. \\

\begin{proposition}
\label{prop:excess k-theory}

We have the following identities of operators $\km \rightarrow \kms$:
\begin{equation}
\label{eqn:excess relations}
e_{(d_1,\dots,d_n)} e_{(d_1',\dots,d_{n'}')} \Big|_\Delta = e_{(d_1,\dots,d_n,d_1',\dots,d_{n'}')} - q \cdot e_{(d_1,\dots,d_n-1,d_1'+1,\dots,d_{n'}')}
	\end{equation}
for all $d_1,\dots,d_n,d_1',\dots,d'_{n'} \in \BZ$. The analogous formula to \eqref{eqn:excess relations} holds with the $e$'s replaced by $f$'s, once one replaces the product in the LHS by the opposite product. \\

\end{proposition}

\begin{proof} We will prove \eqref{eqn:excess relations} for $n=n'=1$, and then the general case will be clear, in light of the fact that the operators \eqref{eqn:e in k-theory} arise as successive push-forwards and pull-backs between $\fZ_1$ and $\fZ_2^\bullet$ (cf. the diagram after \eqref{eqn:def functor}). Consider the diagram:
\begin{equation}
\label{eqn:big square}
		\xymatrix{\fZ_2^\bullet \ar[r]^{\pi_-} \ar[d]_{\pi_+} & \fZ_1 \ar[d]^{p_+ \times p_S} \ar[r]^{p_-} & \CM\\
			\fZ_1 \ar[r]^-{p_- \times p_S} \ar[d]_{p_+ \times p_S} & \CM \times S & \\ \CM \times S & &}
\end{equation}
with the notation as in Subsection \ref{sub:main intro}. By a simple diagram chase, we have:
$$
e_d e_{d'}\Big|_\Delta = (p_+ \times p_S)_* \Big(\CL^d \cdot (p_- \times p_S)^* \circ (p_+ \times p_S)_*( \CL^{d'} \cdot p_-^* )\Big) 
$$
By Propositions \ref{prop:xx} and \ref{prop:base change excess} (the latter Proposition applies because all the maps in \eqref{eqn:big square} are closed embeddings followed by smooth maps), we have:
$$
(p_- \times p_S)^* \circ (p_+ \times p_S)_* = \pi_{+*}\left[ \left(1 - \frac {\CL_2 q}{\CL_1} \right) \cdot \pi_-^* \right]
$$
where $p_S : \fZ_2^\bullet \rightarrow S$ in the RHS of the formula above. Therefore, we have:
$$
e_d e_{d'}\Big|_\Delta = (p_+ \times p_S)_* \left\{\CL^d \cdot \pi_{+*} \left[\left(1 - \frac {\CL_2q}{\CL_1} \right) \cdot \pi_{-}^* \left(\CL^{d'} \cdot p_-^* \right) \right]  \right\}
$$
and it is easy to see that the right-hand side is $e_{(d,d')} - q \cdot e_{(d-1,d'+1)}$.

\end{proof}
	
\noindent As an immediate consequence of Theorem \ref{thm:main}, we conclude that the operators \eqref{eqn:def functor k} satisfy the identities \eqref{eqn:new relations} of operators $\km \rightarrow \kmss$, as well as the opposite Lie bracket relations for the commutator of $f_{(d_1,\dots,d_n)}$ with $f_k$. \\
	
	\section{The $\CA$ action}
	\label{sec:a infty acts}
	
	\medskip
	
	\subsection{} We will now recall the algebraic structure that governs the composition of the operators \eqref{eqn:e in k-theory}, \eqref{eqn:f in k-theory}, \eqref{eqn:h in k-theory}. Our presentation will be an adaptation of \cite{BS} and \cite{S}. Consider two formal parameters $q_1$ and $q_2$, let $q=q_1q_2$ and define
	$$
	[n] = 1+q^{-1}+ \dots +q^{-n+1}
	$$
	Throughout this Section, we will often encounter the ring
	\begin{equation}
	\label{eqn:ring loc} 
	\BK = \BZ[q_1^{\pm 1}, q_2^{\pm 1}]^{\sym}_{([1], [2], [3],...)}
	\end{equation}
	where $\sym$ refers to Laurent polynomials which are symmetric in $q_1$ and $q_2$. \\
	
	\begin{definition} 
		\label{def:algebra} 
		
		(\cite{S}) Consider the $\BK$--algebra $\CA$ with generators
		$$
		\{E_k,F_k,H^\pm_l\}_{k \in \BZ, l\in \BN}
		$$
		modulo the following relations
		\begin{multline}
		(z-wq_1)(z-wq_2)\left(z - \frac wq \right) E(z) E(w) = \\ = \left(z - \frac w{q_1}\right)\left(z - \frac w{q_2} \right)\left(z - wq \right) E(w) E(z) \label{eqn:rel 1}
		\end{multline}
		\begin{multline}
		(z-wq_1)(z-wq_2)\left(z - \frac wq \right) E(z) H^\pm(w) = \\ = \left(z - \frac w{q_1}\right)\left(z - \frac w{q_2} \right)\left(z - wq \right) H^\pm(w) E(z) \label{eqn:rel 2}
		\end{multline}
		\begin{equation}
		\label{eqn:rel 3}
		[[E_{k+1},E_{k-1}],E_k] = 0 \qquad \forall k \in \BZ
		\end{equation}
		together with the opposite relations for $F(z)$ instead of $E(z)$, as well as
		\begin{equation}
		\label{eqn:rel 4}
		[E(z), F(w)] = \delta \left(\frac zw \right) (1-q_1)(1-q_2) \left( \frac {H^+(z) - H^-(w)}{1-q^{-1}} \right)
		\end{equation}
		where
		$$
		E(z) = \sum_{k \in \BZ} \frac {E_k}{z^k}, \qquad F(z) = \sum_{k \in \BZ} \frac {F_k}{z^k}, \qquad H^\pm(z) = \sum_{l = 0}^{\infty} \frac {H_l^\pm}{z^{\pm l}}
		$$
		We will set $H^+_0 = 1$ and $H^-_0 = c^{-1}$, and note that $c$ is a central element of $\CA$. \\
		
	\end{definition}
	
	\subsection{} The algebra $\CA$ is known by many names, including the Ding-Iohara-Miki algebra (\cite{DI, M}), the double shuffle algebra (\cite{FHHSY}), the stable limit of trigonometric DAHA (\cite{Ch}), and many other incarnations in representation theory and mathematical physics. However, the main description we will be concerned with is that of the elliptic Hall algebra of \cite{BS}. Specifically, the following is the main Theorem of \cite{S}. \\
	
	\begin{theorem} 
		\label{thm:algebra} 
		
Define the elliptic Hall algebra (\cite{BS}, see \cite[Theorem 3.5]{W univ} for our conventions) to be generated by elements $\{E_{n,k}\}_{(n,k) \in \BZ^2 \backslash (0,0)}$, modulo the relations
		\begin{multline}
		[E_{n,k},E_{n',k'}] = (1-q_1)(1-q_2)  \\ \left( \sum^{t \geq 1, \sum_{i=1}^t n_i = n+n', \sum_{i=1}^t k_i = k+k'}_{(n',k') \ccur (n_1,k_1) \ccur \dots \ccur (n_t,k_t) \ccur (n,k)} E_{n_1,k_1} \dots E_{n_t,k_t} \cdot \emph{coeff} \right) \label{eqn:comm}
		\end{multline}
for any $(n',k') \ccur (n,k)$ (the notation $\ccur$ means that $(n',k')$ can be reached clockwise from $(n,k)$ without crossing the negative $y$ axis) and certain scalars $\emph{coeff} \in \BK$. \\

\noindent Then upon localization with respect to the element $(1-q_1)(1-q_2) \in \BK$, the algebra $\CA$ becomes isomorphic to the elliptic Hall algebra via the assignment
\begin{equation}
		\label{eqn:gan}
E_{-1,k} = E_k, \qquad E_{1,k} = F_k , \qquad \frac {1 + \sum_{l=1}^\infty \frac {E_{0,\pm l}}{(-zq)^{\pm l}}}{1 + \sum_{l=1}^\infty \frac {E_{0,\pm l}}{(-z)^{\pm l}}} = H^\pm(z)
		\end{equation}
		
	\end{theorem}
	
\medskip

\noindent We will henceforth abuse notation and refer to $E_{n,k}$ as elements of $\CA$. If the triangle with vertices $(0,0)$, $(n,k)$, $(n+n',k+k')$ has no lattice points inside and on any of the edges, then \eqref{eqn:comm} takes the following simple form:
	\begin{equation}
	\label{eqn:comm exp} 
	[E_{n,k},E_{n',k'}] = (1-q_1)(1-q_2) E_{n+n',k+k'} 
	\end{equation}
	Formula \eqref{eqn:comm exp} shows how, after inverting $(1-q_1)(1-q_2)$, any $E_{n,k}$ for $\gcd(n,k) = 1$ can be obtained from successive commutators of $E_{\pm 1,k}$ (there is a similar formula in \cite{BS} which deals with the non coprime case, see \cite{W univ} for our conventions). \\
	
	\begin{remark}
		\label{rem:heis}
		
		Another important case of \eqref{eqn:comm} is when $(n,k)$ and $(n',k')$ are proportional. In order to state the relation, let us set for any coprime integers $n,k$:
		$$
		\exp \left(- \sum_{s=1}^\infty \frac {P_{ns,ks}}{s x^s} \right) = 1 + \sum_{s=1}^\infty \frac {E_{ns,ks}}{(-x)^s}
		$$
		Then the elements $P_{n,k} \in \CA$ defined by the formula above are multiples of the generators considered in \cite{BS} (see \cite[Section 2.6]{W} for a formula of the precise multiple). In terms of the $P_{n,k}$'s, formula \eqref{eqn:comm} when $(n,k)$ and $(n',k')$ are proportional reads
		\begin{equation}
		\label{eqn:comm heis}
		[P_{n,k}, P_{n',k'}] = \delta_{n+n'}^0 (1-q_1^s)(1-q_2^s) \cdot \frac {s(-\emph{sign } n)(1-c^{-|n|})}{1-q^{-s}}
		\end{equation}
		where $s = \gcd(n,k)$. In all representations of $\CA$ considered in the present paper, we will set $c = q^r$ for a natural number $r$ (called the ``level" of the representation) and therefore the fraction in \eqref{eqn:comm heis} is an element of $\BK$. As a consequence of this fact, we conclude that the algebra $\CA$ contains a deformed Heisenberg algebra for each slope $\frac kn$. \\
		
	\end{remark}
	
	\subsection{} In \cite[Proposition 6.2]{Shuf}, we defined elements (albeit denoted by $X_{m_1,\dots,m_k}$)
	\begin{equation}
	\label{eqn:new e}
	\Big\{ E_{(d_1,\dots ,d_n)} \Big\}_{d_1,\dots ,d_n \in \BZ} \in \CA
	\end{equation}
	for all $d_1,\dots,d_n \in \BZ$. It is an easy exercise to show that we have the following property for all $d_1,\dots,d_n,d_1',\dots,d'_{n'} \in \BZ$ (see \cite[Proposition 6.5]{Shuf}):
	\begin{equation}
	\label{eqn:eee}
	E_{(d_1,\dots,d_n)} E_{(d_1',\dots,d_{n'}')} = E_{(d_1,\dots,d_n,d_1',\dots,d_{n'}')} - q \cdot E_{(d_1,\dots,d_n-1,d_1'+1,\dots,d_{n'}')}
	\end{equation}
Meanwhile, we have by convention for all $(n,k) \in \BN \times \BZ$ (see \cite[Proposition 2.3]{W univ}):
	\begin{equation}
	\label{eqn:formula e}
	E_{-n,k} = q^{\gcd(n,k)-1} E_{(d^{(n,k)}_1,\dots,d^{(n,k)}_n)}
	\end{equation}
	where
	\begin{equation}
	\label{eqn:d i n k}
	d^{(n,k)}_i = \left \lceil \frac {ki}n \right \rceil - \left \lceil \frac {k(i-1)}n \right \rceil + \delta_i^n - \delta_i^1
	\end{equation}
for all $i$. As shown in the proof of Proposition 2.3 of \emph{loc. cit.}, any $E_{(d_1,\dots,d_n)}$ can be written as a linear combination of products of $E_{-n,k}$'s using relation \eqref{eqn:eee}. \\

\begin{remark}

Analogously, one can define elements $F_{(d_1,\dots,d_n)} \in \CA$ instead of $E_{(d_1,\dots,d_n)}$ by replacing $E_{-n,k}$ with $E_{n,k}$ in \eqref{eqn:formula e}. \\

\end{remark}

\noindent The elements \eqref{eqn:new e} were constructed using the isomorphism between $\CA$ and a double shuffle algebra (see \cite{Shuf}), and this allows us to prove the following. \\
	
	\begin{proposition}
		\label{prop:formula}
		
		For any $d_1,\dots,d_n,k \in \BZ$, we have (note that $E_{(k)} = E_k$)
		\begin{multline}
		[E_{(d_1,\dots,d_n)}, E_k] = (1-q_1)(1-q_2) \\ \sum_{i=1}^n \begin{cases} - \sum_{k \leq a < d_i} E_{(d_1,\dots,d_{i-1},a,d_i+k-a,d_{i+1},\dots,d_n)}  & \text{if } d_i > k   \\ \\ \sum_{d_i \leq a < k} E_{(d_1,\dots,d_{i-1},a,d_i+k-a,d_{i+1},\dots,d_n)} & \text{if } d_i < k \end{cases} 
		\label{eqn:new formula}
		\end{multline}
		There is no summand corresponding to any $i$ such that $d_i = k$. \\
		
	\end{proposition}
	
	\noindent Proposition \ref{prop:formula} may be proved by embedding the negative half of $\CA$ (i.e. the subalgebra generated by $\{E_k\}_{k \in \BZ}$) in the algebra $\BA_{q,t}$ of \cite{CGM} via the last formula of \loccitt \ Alternatively, we recall that that the elliptic Hall algebra $\CA$ acts on the $K$--theory group of the moduli space of rank $r$ framed sheaves on $\BP^2$ (\cite{FT,SV}, see  \cite{Mod} for our conventions). This statement may be interpreted as an equivariant version of the results in the present paper, and our proof carries through in the equivariant case. Therefore, \eqref{eqn:new relations} implies that relation \eqref{eqn:new formula} holds in the level $r$ representation of $\CA$. Since any non-zero element of $\CA$ acts non-trivially in some level $r$ representation for $r$ large enough, this implies that relations \eqref{eqn:new formula} hold in $\CA$. \\
	
	\begin{proposition}
		\label{prop:equivalent}
		
		Relations \eqref{eqn:rel 1} and \eqref{eqn:rel 3} both follow from \eqref{eqn:new formula}. \\
		
	\end{proposition}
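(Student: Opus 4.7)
The plan is to derive \eqref{eqn:rel 3} first as an easy two-step application of \eqref{eqn:new formula}, and then reduce \eqref{eqn:rel 1} to a family of quadratic identities that can be verified using \eqref{eqn:new formula} for $n=1$ combined with \eqref{eqn:eee}.

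For \eqref{eqn:rel 3}, I first apply \eqref{eqn:new formula} with $n=1$ to the inner commutator: since $k+1 > k-1$,
\[ [E_{k+1}, E_{k-1}] = -(1-q_1)(1-q_2)\bigl(E_{(k-1,k+1)} + E_{(k,k)}\bigr). \]
Then I apply \eqref{eqn:new formula} with $n=2$ to $[E_{(k-1,k+1)}, E_k]$ and to $[E_{(k,k)}, E_k]$. In the first, the $i=1$ summand (where $d_1 = k-1 < k$, forcing $a = k-1$) contributes $+E_{(k-1,k,k+1)}$, while the $i=2$ summand (where $d_2 = k+1 > k$, forcing $a = k$) contributes $-E_{(k-1,k,k+1)}$; these cancel. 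In the second, $d_1 = d_2 = k$ means neither index contributes. Hence $[[E_{k+1}, E_{k-1}], E_k] = 0$.

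For \eqref{eqn:rel 1}, set $\alpha = q_1+q_2$, $\beta = \alpha+1/q$, $\gamma = \alpha/q+q$, and $c = (1-q_1)(1-q_2) = 1-\alpha+q$, so the two cubic factors expand as $z^3 - \beta z^2 w + \gamma zw^2 - w^3$ and $z^3 - \gamma z^2 w + \beta zw^2 - w^3$. Extracting the coefficient of $z^{3-i}w^{-j}$ turns \eqref{eqn:rel 1} into the family of quadratic identities
\[ E_i E_j - \beta E_{i-1}E_{j+1} + \gamma E_{i-2}E_{j+2} - E_{i-3}E_{j+3} = E_j E_i - \gamma E_{j+1}E_{i-1} + \beta E_{j+2}E_{i-2} - E_{j+3}E_{i-3} \]
indexed by $(i,j) \in \BZ^2$. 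Using \eqref{eqn:eee} in the form $E_aE_b = E_{(a,b)} - qE_{(a-1,b+1)}$, I convert each side to a linear combination of $E_{(p,p')}$ with $p+p' = i+j$. Finally, \eqref{eqn:new formula} with $n=1$ combined with \eqref{eqn:eee} yields the swap relation, valid for $d > k'$,
\[ E_{(d,k')} - E_{(k',d)} = qE_{(d-1,k'+1)} - qE_{(k'-1,d+1)} - c\sum_{k' \leq a < d} E_{(a, d+k'-a)}, \]
which I apply iteratively to rewrite every "reversed" $E_{(p,p')}$ with $p > p'$ appearing on either side in terms of normal ones with $p \leq p'$, and then verify that all contributions cancel.

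The hard part will be the combinatorial bookkeeping for general $(i,j)$: each application of the swap relation generates a new sum of $E_{(a,b)}$ terms, and one must telescope these through $O(|i-j|)$ layers of substitutions before the coefficients cancel. The key arithmetic input is $1 + q - c = \alpha$. The smallest nontrivial case $i=3, j=0$ illustrates the pattern: it reduces (after dividing by 2) to showing that
\[ E_{(3,0)} - (\alpha+q+\tfrac{1}{q})E_{(2,1)} + \bigl[\alpha(q+\tfrac{1}{q})+(1+q)\bigr]E_{(1,2)} - (\alpha+q^2+1)E_{(0,3)} + qE_{(-1,4)} = 0, \]
which follows by substituting for $E_{(3,0)}$ using $[E_3, E_0]$ and for $E_{(2,1)} = \alpha E_{(1,2)} - qE_{(0,3)}$ using $[E_2, E_1]$, and checking that the residual coefficients of $E_{(0,3)}$, $E_{(1,2)}$, and $E_{(-1,4)}$ all vanish.
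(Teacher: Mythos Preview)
Your treatment of \eqref{eqn:rel 3} is correct and essentially identical to the paper's: both compute the inner bracket via \eqref{eqn:new formula} with $n=1$, then apply \eqref{eqn:new formula} with $n=2$ to see the two contributions cancel.

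For \eqref{eqn:rel 1} your strategy is the same as the paper's in spirit (extract coefficients, then reduce everything to identities among the $E_{(a,b)}$ via \eqref{eqn:eee} and \eqref{eqn:new formula}), but your execution is both more roundabout and incomplete. The paper rearranges the coefficient identity so that the left-hand side is exactly
\[
[E_{m+3},E_n]-\Bigl(\tfrac1q+1+q\Bigr)[E_{m+2},E_{n+1}]+\Bigl(\tfrac1q+1+q\Bigr)[E_{m+1},E_{n+2}]-[E_m,E_{n+3}],
\]
a linear combination of four commutators, while the right-hand side is an explicit finite combination of six $E_{(a,b)}$'s. Applying \eqref{eqn:new formula} once to each commutator writes the left-hand side as a sum of $E_{(a,\,m+n+3-a)}$'s whose coefficients are differences of indicator functions of intervals; matching these against the six terms on the right is a short, uniform computation valid for all $(m,n)$ simultaneously.

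Your route instead introduces the swap relation and proposes to iterate it until every $E_{(p,p')}$ has $p\le p'$. This is a legitimate normalization (each iteration strictly decreases the maximal gap $p-p'$, so it terminates), but the number of iterations grows with $|i-j|$, and you only carry out the computation for the single case $i=3$, $j=0$. Asserting that ``the pattern'' persists for all $(i,j)$ is the gap: you have not given any uniform argument, induction, or generating-function identity that handles the infinitely many remaining cases. The paper's grouping into four commutators is precisely the device that avoids this unbounded iteration and yields a check that is the same length for every $(m,n)$. If you want to rescue your approach, the cleanest fix is to regroup your $E_{(a,b)}$ terms back into the four commutators before applying \eqref{eqn:new formula}, at which point you recover the paper's argument.
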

	
	\begin{proof} Taking the coefficient of $z^{-m} w^{-n}$ in relation \eqref{eqn:rel 1} shows it is equivalent to
$$
E_{m+3}E_n - \left(q_1+q_2+\frac 1q\right) E_{m+2}E_{n+1} + \left(\frac 1{q_1}+\frac 1{q_2}+q\right)E_{m+1}E_{n+2} - E_m E_{n+3} = 
$$
$$
= E_n E_{m+3} -  \left(\frac 1{q_1}+\frac 1{q_2}+q\right) E_{n+1}E_{m+2} + \left(q_1+q_2+\frac 1q\right) E_{n+2}E_{m+1} - E_{n+3}E_m
$$
for all $m,n \in \BZ$. We may rewrite the relation above as
		\begin{multline*}
		\left[E_{m+3}E_n - E_{m+2}E_{n+1} \left(\frac 1q + 1 + q \right) + E_{m+1}E_{n+2}  \left(\frac 1q + 1 + q \right) - E_{m}E_{n+3} \right] - \\
		- \left[E_{n} E_{m+3} - E_{n+1}E_{m+2} \left(\frac 1q + 1 + q \right) + E_{n+2}E_{m+1} \left(\frac 1q + 1 + q \right) - E_{n+3}E_{m} \right] = \\
		= (1-q_1)(1-q_2)\left( \frac {E_{n+1} E_{m+2}}q - E_{n+2}E_{m+1} - E_{m+2}E_{n+1} + \frac {E_{m+1}E_{n+2}}q \right)
		\end{multline*}
		In the right-hand side, we may convert every $E_m E_n$ into $E_{(m,n)} - q E_{(m-1,n+1)}$ using relation \eqref{eqn:eee}, and so the formula above becomes equivalent to
		$$
		[E_{m+3}, E_n] - \left(\frac 1q + 1 + q \right) [E_{m+2}, E_{n+1}] + \left( \frac 1q + 1 + q \right) [E_{m+1},E_{n+2}] - [E_m, E_{n+3}] = 
		$$
		$$
		= (1-q_1)(1-q_2) \cdot
		$$
		$$
		\left[ (E_{(n+1,m+2)} + E_{(m+1,n+2)}) \left(\frac 1q + q \right) - E_{(n,m+3)} - E_{(n+2,m+1)} - E_{(m+2,n+1)} - E_{(m,n+3)} \right]
		$$
		It is easy to check that the formula above follows by applying \eqref{eqn:new formula} to the four Lie brackets in the left-hand side. As for \eqref{eqn:rel 3}, it follows from the fact that
		$$
		[E_{k-1},E_{k+1}] = E_{(k-1,k+1)} + E_{(k,k)}
		$$
		together with
		$$
		[E_{(k-1,k+1)}+E_{(k,k)},E_k] = [E_{(k-1,k+1)},E_k]+[E_{(k,k)},E_k] = E_{(k-1,k,k+1)}-E_{(k-1,k,k+1)} = 0
		$$
both of which are special cases of \eqref{eqn:new formula}. 
		
	\end{proof}
	
	\subsection{}
	\label{sub:action}
	
	We would like to compare the abstract algebra elements $E_{(d_1,\dots,d_n)} \in \CA$ of the previous Subsections with the explicit maps $e_{(d_1,\dots,d_n)} : \km \rightarrow \kms$ from Subsection \ref{sub:k theory}. The goal would be to say that we have an action of the algebra $\CA$ on $K_\CM$, but this statement requires care: taken literally, having an action means that any element of $\CA$ gives rise to a homomorphism $K_\CM \rightarrow K_\CM$. In formula \eqref{eqn:e in k-theory}, we see that this is not quite the case, so we must adapt the notion of ``action". \\
	
	\begin{definition}
		\label{def:notation}
		
		For any group homomorphisms $x,y : K_\CM \rightarrow K_{\CM \times S}$, define
		$$
		xy|_\Delta = \Big\{ \km \xrightarrow{y} \kms \xrightarrow{x \times \emph{Id}_S} \kmss \xrightarrow{(\emph{Id}_\CM \times \Delta)^*} \kms \Big\}
		$$
		where $\Delta : S \hookrightarrow S \times S$ is the diagonal embedding. Also define
		\begin{multline*}
		[x,y] = \left\{ \km \xrightarrow{y} K_{\CM \times S_2} \xrightarrow{x \times \emph{Id}_{S_2}} K_{\CM \times S_1 \times S_2} \right\} - \\
		-  \left\{ \km \xrightarrow{x} K_{\CM \times S_1} \xrightarrow{y \times \emph{Id}_{S_1}} K_{\CM \times S_1 \times S_2}  \right\}
		\end{multline*}
		where $S_1 = S_2 = S$, but we use different labels in order to emphasize the fact that $x$ takes values in the first factor of $S$ and $y$ takes values in the second factor. \\
		
	\end{definition}
	
	\noindent The notions in Definition \ref{def:notation} satisfy associativity, in the sense that
	\begin{equation} 
	\label{eqn:assoc}
	(xy|_\Delta )z|_\Delta = x(yz|_\Delta)|_\Delta 
	\end{equation}
	as homomorphisms $\km \rightarrow \kms$. Therefore, we will unambiguously use the notation $xyz|_\Delta$. We also have the following version of the Jacobi identity
	\begin{equation} 
	\label{eqn:jacobi 0}
	[[x,y],z]+[[y,z],x]+[[z,x],y] = 0
	\end{equation}
	as homomorphisms $\km \rightarrow \kmsss$. \\
	
	\begin{definition}
		\label{def:diagonal}
		
		If two homomorphisms $x,y : \km \rightarrow \kms$ satisfy 
		\begin{equation}
		\label{eqn:jov}
		[x,y] = \Delta_*(z)
		\end{equation}
		for some homomorphism $z : \km \rightarrow \kms$, we will write
		\begin{equation}
		\label{eqn:def red}
		[x,y]_\ered = z
		\end{equation}
		Note that formula \eqref{eqn:def red} is unambiguous, since $\Delta_* : \ks \rightarrow \kss$ is injective (as it has a left inverse given by projection to the first factor), and so $z$ in \eqref{eqn:jov} is unique. \\
		
	\end{definition}
	
	\noindent We have the following version of the Leibniz rule
	\begin{equation}
	\label{eqn:leibniz}
	[x,yz|_\Delta]_\red = [x,y]_\red z|_\Delta + y [x,z]_\red|_\Delta 
	\end{equation}
	which is an equality of homomorphisms $\km \rightarrow \kms$. Formula \eqref{eqn:leibniz} follows from base change in the derived fiber square
	$$
	\xymatrix{
		S  \ar@{^{(}->}[d]_-\Delta \ar@{^{(}->}[r]^-{\Delta} & S \times S  \ar@{^{(}->}[d]^-{\Delta \times \text{Id}_S} \\
		S \times S \ar@{^{(}->}[r]^-{\text{Id}_S \times \Delta} & S\times S \times S} \qquad \qquad (\text{Id}_S \times \Delta)^* \circ (\Delta \times \text{Id}_S)_* = \Delta_* \circ \Delta^*
	$$
	and we leave the details to the interested reader. Similarly, \eqref{eqn:jacobi 0} implies
	\begin{equation} 
	\label{eqn:jacobi}
	[[x,y]_\red,z]_\red+[[y,z]_\red,x]_\red+[[z,x]_\red,y]_\red = 0
	\end{equation}
	as homomorphisms $\km \rightarrow \kms$. \\
	
	\subsection{} 
	
	For the remainder of the paper, the parameters $q_1,q_2$ over which the algebra $\CA$ is defined will be reinterpreted such that
	$$
	q_1 + q_2 = [\Omega_S^1], \qquad q = q_1q_2 = [\omega_S]
	$$
While $q_1,q_2$ are not themselves well-defined in $\ks$, any symmetric Laurent polynomial in $q_1,q_2$ is indeed well-defined. In other words, we obtain a ring homomorphism
	$$
	\kk \rightarrow K_S
	$$
	where $\kk$ is the ring \eqref{eqn:ring loc}. In order for the ring homomorphism above to be well-defined, we need the quantity $1+q^{-1}+\dots+q^{-n+1}$ to be invertible in $\ks$, and the solution to achieving this is to work with $K$-theory with $\BQ$ coefficients (indeed, $q$ is unipotent in $\ks$, so $1+q^{-1}+\dots+q^{-n+1}$ is equal to $n$ times a unit). Define
\begin{equation}
\label{eqn:red com algebra}
	[X,Y]_\red = \frac {[X,Y]}{(1-q_1)(1-q_2)}
\end{equation}
	for all $X,Y \in \CA$, and note that the right-hand side is well-defined due to the fact that all commutators in the algebra $\CA$ are multiples of $(1-q_1)(1-q_2)$, see \eqref{eqn:comm}. \\
	
	\begin{definition}
		\label{def:action}
		
		An action $\CA \curvearrowright \km$ is an abelian group homomorphism
		\begin{equation}
		\label{eqn:action}
		\CA \stackrel{\Phi}\longrightarrow \emph{Hom}(\km, \kms)
		\end{equation}
		such that for all $X,Y \in \CA$, we have
		\begin{equation}
		\label{eqn:hom}
		\Phi(XY) = \Phi(X)\Phi(Y)|_\Delta
		\end{equation}
		\begin{equation}
		\label{eqn:comm phi}
		[\Phi(X), \Phi(Y)]_\ered = \Phi ([X,Y]_\ered)
		\end{equation}

	\end{definition}
	
	\bigskip
	
	\noindent The compatibility between relations \eqref{eqn:hom} and \eqref{eqn:comm phi} is visible when restricting the latter formula to the diagonal $\Delta :  S \hookrightarrow S \times S$, as follows:
	$$
	\Phi(X)\Phi(Y)|_\Delta - \Phi(Y)\Phi(X)|_\Delta = \Delta^* \Delta_* \left[ \Phi \left( \frac {XY-YX}{(1-q_1)(1-q_2)} \right) \right] = 
	$$
	\begin{equation}
	\label{eqn:compatibility}
	= \Phi \left( \frac {XY-YX}{(1-q_1)(1-q_2)} \right) \cdot \wedge^\bullet(\CN^\vee_{\Delta \hookrightarrow S \times S}) = \Phi(XY-YX)
	\end{equation}
	because the normal bundle $\CN_{\Delta \hookrightarrow S \times S}$ is isomorphic to the tangent bundle to $S$, hence the exterior algebra of its dual equals $(1-q_1)(1-q_2) = [\wedge^\bullet(\Omega_S^1)]$. \\
	
	\begin{theorem}
		\label{thm:a infty acts}
		
		There exists an action $\CA \curvearrowright \km$ as in Definition \ref{def:action}, where
		$$
		\Phi(E_{(d_1,\dots,d_n)}) = e_{(d_1,\dots,d_n)} \qquad \text{and} \qquad \Phi(F_{(d_1,\dots,d_n)}) = f_{(d_1,\dots,d_n)}
		$$
		Moreover, the series $H^\pm(z) \in \CA$ of \eqref{eqn:gan} act by \eqref{eqn:h in k-theory}. \\
		
	\end{theorem}
	
	\begin{proof} Recall from \cite{BS} (see \cite{W univ} for our notations) that a $\kk$-basis of $\CA$ is given by
		\begin{equation}
		\label{eqn:elements}
		E_v = E_{n_1,k_1} \dots E_{n_t,k_t}
		\end{equation}
		as $v = \{(n_1,k_1), \dots ,(n_t,k_t)\}$ goes over all convex paths of lattice points, i.e. such that the slopes $k_i/n_i$ are ordered clockwise starting from the negative $y$ axis (formula \eqref{eqn:comm heis} implies that $E_{n,k}$ and $E_{n',k'}$ commute if $(n,k) \in \BR_{>0}(n',k')$, so we may order them arbitrarily). Thus, we define the action $\Phi : \CA \rightarrow \End(\km, \kms)$ by
		\begin{align}
		&\Phi(E_{-n,k}) = q^{\gcd(n,k)-1} e_{(d^{(n,k)}_1,\dots,d^{(n,k)}_{n})} \quad \text{of \eqref{eqn:e in k-theory}} \label{eqn:phi negative e} \\
		&\Phi(E_{n,k}) = q^{\gcd(n,k)-1} f_{(d_1^{(n,k)},\dots,d^{(n,k)}_{n})} \ \ \quad \text{of \eqref{eqn:f in k-theory}} \label{eqn:phi positive e}
		\end{align}
		for any $n > 0$, $k \in \BZ$, where $d_i^{(n,k)}$ are defined in \eqref{eqn:d i n k}. Moreover, we have
$$
\Phi(E_{0,k}) = \text{multiplication by }\wedge^k \CU
$$
Then requirement \eqref{eqn:hom} forces us to set
$$
\Phi(E_v) = \Phi(E_{n_1,k_1}) \dots \Phi(E_{n_t,k_t}) \Big|_\Delta
$$
Together with $\kk$-linearity, this completely determines $\Phi$. \\
		
		\begin{claim}
			\label{claim:o}
			
			For any lattice points $(n',k') \ccur (n,k)$, we have
			\begin{multline}
			[\Phi(E_{n,k}), \Phi(E_{n',k'})]_\ered = \\
			= \sum^{t \geq 1, \sum_{i=1}^t n_i = n+n', \sum_{i=1}^t k_i = k+k'}_{(n',k') \ccur (n_1,k_1) \ccur \dots \ccur (n_t,k_t) \ccur (n,k)} \Phi(E_{n_1,k_1}) \dots \Phi(E_{n_t,k_t}) \Big|_\Delta \cdot \emph{coeff} \label{eqn:rely}
			\end{multline}
			where the coefficients in the right-hand side match the analogous ones in the right-hand side of \eqref{eqn:comm}, but with $q_1$ and $q_2$ interpreted as the Chern roots of $\Omega_S^1$. \\
			
\end{claim}
		
\noindent To conclude the proof of Theorem \ref{thm:a infty acts}, we must show that relations \eqref{eqn:rely} imply that $\Phi$ satisfies properties \eqref{eqn:hom} and \eqref{eqn:comm phi}; we will only take care of the former of these, as the latter is analogous and so left to the interested reader. We will use the ``straightening" argument of \cite{BS}: one may define the elements \eqref{eqn:elements} of $\CA$ for an arbitrary path $v$ of lattice points. The convexification of $v$ is the path consisting of the same collection of lattice points, but ordered in clockwise order of slope, starting from the negative $y$ axis. The area of a path, denoted by $a(v)$, is defined as the area between the path $v$ and its convexification. Therefore, we will show that
		\begin{equation}
		\label{eqn:kris}
		\Phi(E_v) = \Phi(E_{n_1,k_1}) \dots \Phi(E_{n_t,k_t}) \Big|_\Delta
		\end{equation}
		holds for an arbitrary path $v$, by induction on $a(v)$ (this claim will establish \eqref{eqn:hom}). The base case is when $a(v) = 0$, i.e. $v$ is already convex, in which case \eqref{eqn:kris} is simply the definition of $\Phi(E_v)$. A non-convex path $v$ may be written as
		$$
		v = \Big \{ \dots , (n_i,k_i), (n_{i+1},k_{i+1}), \dots \Big \} 
		$$
		for some $i$ such that $k_i/n_i < k_{i+1}/n_{i+1}$. Let $v'$ denote the path obtained from $v$ by switching the lattice points $(n_i,k_i)$ and $(n_{i+1},k_{i+1})$. It is elementary to observe that $a(v') < a(v)$, and moreover the straightening argument of \cite{BS} shows that
		\begin{equation}
		\label{eqn:george}
		E_v = E_{v'} + (1-q_1)(1-q_2) \sum_{v''} E_{v''} \cdot \text{coeff}
		\end{equation}
		where the sum goes over paths with $a(v'') < a(v)$, and the coefficients are induced from \eqref{eqn:comm}. Because of \eqref{eqn:rely} and the induction hypothesis, we have
	\begin{equation}
		\label{eqn:noel}
		\dots \Phi(E_{n_i,k_i}) \Phi(E_{n_{i+1},k_{i+1}}) \dots \Big|_\Delta = \Phi(E_{v'}) + \wedge^\bullet(\CN^\vee_{\Delta \hookrightarrow S \times S}) \sum_{v''} \Phi(E_{v''}) \cdot \text{coeff}
		\end{equation}
with the coefficients in the RHS being the same ones as in \eqref{eqn:george} (indeed, the two computations \eqref{eqn:george} and \eqref{eqn:noel} take as inputs \eqref{eqn:comm} and \eqref{eqn:rely}, respectively, so it should be no surprise that they produce the same coefficients). Formula \eqref{eqn:noel} yields the induction step of \eqref{eqn:kris}, which as we have seen establishes \eqref{eqn:hom}. \\
		
\begin{proof} \emph{of Claim \ref{claim:o}}: let us first give a slightly imprecise argument. By Theorem \ref{thm:algebra}, the commutation relations \eqref{eqn:comm} can be deduced from relations \eqref{eqn:rel 1}--\eqref{eqn:rel 4}. As the latter hold with $E_k, F_k,H^\pm_k$ replaced by $e_k = \Phi(E_k)$, $f_k = \Phi(F_k)$, $h^\pm_k = \Phi(H^\pm_k)$ of Subsection \ref{sub:k theory} (this fact was proved in \cite{Univ}), one might conclude the Claim. \\

\noindent The imprecision in the previous paragraph is that, strictly speaking, Theorem \ref{thm:algebra} requires one to invert the scalar $(1-q_1)(1-q_2)$. When $q_1$ and $q_2$ are set equal to the Chern roots of $\Omega_S^1$, this scalar is manifestly a zero divisor, so inverting it is off the table. However, the ace up our sleeve is the operation $(x,y) \mapsto [x,y]_{\text{red}}$ for homomorphisms $x,y : \km \rightarrow \kms$, and its counterpart \eqref{eqn:red com algebra} for elements of $\CA$. We thus revisit the argument of \cite{S}, which consisted of the following main steps: \\

\begin{enumerate}[leftmargin=*]

\item Define $E_{n,k} \in \CA$ as iterated reduced commutators \eqref{eqn:red com algebra} of $E_{\pm 1,l} \in \CA$ via \eqref{eqn:comm exp} (and its generalization to the case when $\gcd(n,k) > 1$, see \cite{W univ}). In the geometric case, we emulate this procedure and define $\Phi(E_{n,k}) : \km \rightarrow \kms$ as iterated reduced commutators \eqref{eqn:def red} of $\Phi(E_{\pm 1,l}) : \km \rightarrow \kms$. \\

\item Show that relations \eqref{eqn:comm} among the elements $E_{n, k} \in \CA$ thus constructed can be obtained from relations \eqref{eqn:rel 1}--\eqref{eqn:rel 4}. While combinatorially quite involved, this part of the argument of \cite{S} only uses the Lebniz rule and Jacobi identities. By \eqref{eqn:leibniz} and \eqref{eqn:jacobi}, the same argument therefore applies equally well to show that relations \eqref{eqn:rely} among the operators $\Phi(E_{n, k}) : \km \rightarrow \kms$ can be obtained from the geometric counterparts of relations \eqref{eqn:rel 1}--\eqref{eqn:rel 4}, which we know to hold. \\

\end{enumerate}

\noindent The only thing that remains to be proved is that the operators $\Phi(E_{n,k})$ constructed as in item (1) above match the operators in the right-hand sides of \eqref{eqn:phi negative e} and \eqref{eqn:phi positive e}. This happens because relations \eqref{eqn:eee} and \eqref{eqn:new formula} in $\CA$ are matched by relations \eqref{eqn:excess relations} and \eqref{eqn:new relations} between operators $\km \rightarrow \kms$.
	
	\end{proof}	
		
	\end{proof}
	
	\section{Dimension estimates}
	\label{sec:app}
	
\medskip	

\subsection{}
\label{sub:app intro}

The main purpose of the present Section is to obtain certain estimates on the dimensions of various strata of the schemes $\fZ_\lambda$ of Definition \ref{def:z}, when $n = |\lambda| \leq 4$. In what follows, the \textbf{dimension} of a scheme will refer to the maximum of the dimensions of its irreducible components. Consider the map
$$
\fZ_\lambda \xrightarrow{\pi} \CM \times S^{n}
$$
which takes a flag \eqref{eqn:z} to $(\CF_n, x_1, \dots, x_n)$. The target of this map is smooth but disconnected, with the dimension of its connected components depending linearly on the second Chern class of $\CF_n$. Thus, the phrase ``dimension (resp. number of irreducible components) of $\fZ_\lambda$" will refer to the dimension (resp. number of irreducible components) of $\pi^{-1}(C)$ for any fixed connected component $C$ of $\CM \times S^n$. The analogous terminology will apply to locally closed subsets of $\fZ_\lambda$. \\
	
	\subsection{} 
	\label{sub:quot}
	
\noindent	Any stable sheaf $\CF$ of rank $r > 0$ on a smooth surface is torsion-free, hence it injects into its double dual $\CF \hookrightarrow \CF^{\vee\vee}$. The double dual is a reflexive sheaf, hence locally free (over a smooth surface). Therefore, we have an injection
	\begin{equation}
	\label{eqn:double dual}
	\CF \hookrightarrow \CV
	\end{equation}
	with $\CV$ locally free. Assumption A implies that $\CF$ is stable iff $\CV$ is stable (see \cite[Proposition 5.5]{Univ}), and since stable sheaves over an algebraically closed field are simple, this implies that the inclusion \eqref{eqn:double dual} is unique up to constant multiple. The quotient $\CV/\CF$ is supported at finitely many distinct closed points $x_1,\dots,x_k \in S$, hence
	\begin{equation}
	\label{eqn:finite length}
	\CV/\CF \cong \CQ_1 \oplus \dots \oplus \CQ_k
	\end{equation}
	where $\CQ_i$ is a finite length sheaf supported at the closed point $x_i \in S$. The length $d_i$ of the sheaf $\CQ_i$ will be called the \textbf{defect} of $\CF$ at the point $x_i$, since it measures how far $\CF$ is from being locally free in the vicinity of $x_i$. \\
	
	\begin{definition}
		\label{def:quot 1}
		
		Let $\equot_d$ denote the moduli space of quotients of a rank $r$ locally free sheaf $\CV$, which have length $d$ and are supported at a given closed point $x \in S$. \\
		
	\end{definition}
	
	\noindent As the notation suggests, the variety $\quot_d$ does not depend (up to isomorphism) on the choices of $\CV,x,S$, as long as the latter is a smooth surface. \\
	
	\begin{theorem}
		\label{thm:quot 1}
		
		(\cite{Ba, EL}) If $d > 0$, $\equot_d$ is irreducible of dimension $rd-1$. \\
		
	\end{theorem}
	
	\noindent In order to produce a stable sheaf $\CF$ with prescribed defects at distinct points $x_1, \dots, x_k \in S$, one must start from a stable locally free sheaf $\CV$ with invariants $(r,c_1)$, and modify it according to a point in the irreducible algebraic variety $\quot_{d_1} \times \dots \times \quot_{d_k}$. This means that the locally closed subscheme which parameterizes sheaves with defects $d_1,\dots,d_k$ at given distinct points $x_1,\dots,x_k$ (and no defect anywhere else) has codimension $r(d_1+\dots+d_k) + k$ in $\CM \times S^k$. \\

	\subsection{} 
	\label{sub:connected}
	
	Recall the schemes introduced in Definition \ref{def:z}, namely
	\begin{equation}
	\label{eqn:flag defect}
	\fZ_\lambda = \Big\{(\CF_0 \subset_{p_1} \CF_1 \subset_{p_2} \dots \subset_{p_n} \CF_n) \text{ such that } p_i = p_j \text{ if } i \sim j \text{ in }\lambda \Big\}
	\end{equation}
	where $\lambda$ is a set partition of size $n$. Assume that the set partition $\lambda$ consists of $k$ distinct parts with multiplicities $n_1,\dots,n_k$, and we will use the notation $x_1,\dots,x_k$ for the points of $S$ corresponding to the distinct parts of $\lambda$. All the sheaves in a flag \eqref{eqn:flag defect} have isomorphic double duals, hence they are all contained in one and the same locally free sheaf $\CV$. Therefore, we let
\begin{equation}
\label{eqn:defect z}
	\fZ_\lambda^{\defect d_1,\dots,d_k} \subset \fZ_\lambda
\end{equation}
	be the locally closed subscheme where the support points $x_1,\dots,x_k \in S$ of the flag \eqref{eqn:flag defect} are all distinct, and the sheaf $\CF_n$ has defect $d_i$ at the point $x_i$ for all $i$ (note that $\CF_n$ may have arbitrary defect at points different from $x_1,\dots,x_k$). \\
	
	\begin{definition}
		\label{def:quot 2}
		
		Let $\equot_{d,\dots,d+n}$ denote the moduli space of quotients
		$$
		\CV \twoheadrightarrow \CQ_{d+n} \twoheadrightarrow \dots \twoheadrightarrow \CQ_d
		$$
		where $\CV$ is locally free, and the sheaves $\CQ_{d+n},\dots,\CQ_d$ have lengths $d+n,\dots,d$, respectively, and are supported at one and the same given closed point $x \in S$. \\
		
	\end{definition}
	
	\noindent We will give an explicit construction of the schemes $\quot_{d,\dots,d+n}$ in the next Subsection, as well as some dimension estimates when $n \leq 4$ (as opposed from the scheme in Definition \ref{def:quot 1}, their dimensions grow wildly for large $n$, and they are far from being irreducible). The construction will not depend on $\CV, x, S$, and it will be clear that the same principle as in the previous Subsection applies. Namely, in order to construct a flag \eqref{eqn:flag defect} which lies in the subscheme \eqref{eqn:defect z}, one \\
	
	\begin{enumerate}[leftmargin=*]
		
		\item chooses a locally free sheaf $\CV$ \\
		
		\item chooses distinct points $x_1,\dots,x_k \in S$ \\
		
		\item chooses a point in the variety
		$$
		\quot_{d_1, \dots, d_1 + n_1} \times \dots \times \quot_{d_k, \dots, d_k + n_k}
		$$
($n_i$ denotes the multiplicity of $x_i$ in $\lambda$) which determines the sheaves $\CF_0,\dots,\CF_n$ as modifications of $\CV$ in the vicinity of the chosen points $x_1,\dots,x_k$. \\
		
	\end{enumerate}
	
\noindent More rigorously, the discussion above implies the following. \\

\begin{lemma}
\label{lem:locally isomorphic}

The scheme $\fZ^{\emph{def }d_1,\dots,d_k}_\lambda$ is locally isomorphic to
$$
\CM \times S^k \times \equot_{d_1, \dots, d_1 + n_1} \times \dots \times \equot_{d_k, \dots, d_k + n_k}
$$
\end{lemma}

\bigskip	
	
	\noindent As a consequence of Lemma \ref{lem:locally isomorphic}, we obtain the following result (recall the conventions of Subsection \ref{sub:app intro} when discussing dimension and the number of irreducible components of locally closed subsets of $\fZ_\lambda$, such as $\fZ_\lambda^{\defect d_1,\dots,d_k}$). \\
	
	\begin{corollary}
		\label{cor:def 1}
		
For any sequence of defects $d_1,\dots,d_k \geq 0$ and any set partition $\lambda$ whose parts have multiplicities $n_1,\dots,n_k \in \BN$, we have
		\begin{multline}
		\dim \fZ_\lambda^{\edefect d_1,\dots,d_k} = \econst + r(c^\efirst_2 + c_2^\elast) + k + \\
		+ \sum_{i=1}^k \left( \dim \equot_{d_i,\dots,d_i+n_i} + 1 - 2rd_i - rn_i \right) \label{eqn:def 1}
		\end{multline}
		where $c_2^\efirst$, $c_2^\elast$ are the second Chern classes of the sheaves labeled $\CF_0$ and $\CF_n$ in \eqref{eqn:flag defect}, and $\econst$ is the same constant that appears in \eqref{eqn:dim m}. \\
		
	\end{corollary}
	
	\begin{proof} Because the second Chern class of the locally free sheaf $\CV$ is
		$$
		c_2 = c_2^\last - \sum_{i=1}^k d_i = c_2^\first - \sum_{i=1}^k (d_i+n_i)
		$$
		the dimension of the moduli space parametrizing the locally free sheaf $\CV$ is
		$$
		\const + r(c_2^\first + c_2^\last) - r \sum_{i=1}^k (2d_i+n_i)
		$$
		The choice of the support points $x_1,\dots,x_k$ adds $2k$ dimensions, and the choice of flag $\CF_0 \subset \dots \subset \CF_n \subset \CV$ with the given defects adds $\sum_{i=1}^k \dim \quot_{d_i,\dots,d_i+n_i}$ dimensions to our estimate (according to item (3) above Lemma \ref{lem:locally isomorphic}). \\ 	
	\end{proof}
	
	\subsection{}
	\label{sub:adhm}
	
	Let us now describe the schemes $\quot_d$ and $\quot_{d,\dots,d+n}$ explicitly. Since these do not depend on the choice of the locally free sheaf $\CV$, or of a closed point $x$ on a smooth surface $S$, we may as well consider $\CV = \CO^{\oplus r}$, $S = \BA^2$ and $x = (0,0)$. Therefore, we have
	\begin{align*}
	&\quot_d = \Big\{\CO_{\BA^2}^{\oplus r} \twoheadrightarrow \CQ, \text{length }\CQ = d, \text{supp } \CQ = (0,0) \Big\} \\
	&\quot_{d,\dots,d+n} = \Big\{\CO_{\BA^2}^{\oplus r} \twoheadrightarrow \CQ_{d+n} \twoheadrightarrow \dots \twoheadrightarrow \CQ_d, \text{length }\CQ_i = i, \text{supp } \CQ_i = (0,0) \Big\}
	\end{align*}
	As vector spaces, we may consider isomorphisms $\CQ_i \cong \BC^i$, and the $\CO_{\BA^2} = \BC[x,y]$ module structure on $\CQ_i$ can be packaged by providing two nilpotent commuting endomorphisms $X,Y \in \End(\BC^i)$. A surjective homomorphism $\CO_{\BA^2}^{\oplus r} \twoheadrightarrow \CQ_i$ is the same datum as $r$ vectors $v_1,\dots,v_r \in \BC^i$ which are cyclic. Recall that cyclicity means that the vector space $\BC^i$ is generated by polynomials in $X$ and $Y$ acting on linear combinations of $v_1,\dots,v_r$. In other words, let us construct the $i \times r$ matrix
	$$
	v = (v_1,\dots,v_r) \in \Hom(\BC^r, \BC^i)
	$$
and let $\End_0(\BC^i)$ denote the vector space of nilpotent endomorphisms of $\BC^i$. Then
	\begin{multline*}
	\quot_d = \Big\{ (X,Y,v) \in \End_0(\BC^d) \times \End_0(\BC^d) \times \\
	\times \Hom(\BC^r, \BC^d) \text{ s.t. } [X,Y] = 0, \ v \text{ cyclic} \Big\}/G
	\end{multline*}
	where we take the quotient by the $G = GL_d$ action in order to factor out the ambiguity of the chosen isomorphism $\CQ \cong \BC^d$. Explicitly, $g \in G$ acts by
	$$
	(X,Y,v) \leadsto (gXg^{-1}, gYg^{-1}, gv)
	$$
	and the cyclicity of $v$ implies that the $G$ action is free. The quotient by $G$ is geometric, because the fact that we restrict to the open subscheme of cyclic triples $(X,Y,v)$ is precisely equivalent to restricting to the locus of stable points (as in geometric invariant theory) with respect to the inverse of the determinant character. Similarly,
	\begin{multline}
	\label{eqn:flag of quot}
	\quot_{d,\dots,d+n} = \Big\{ (X,Y,v) \in \End_0(\BC^{d+n} \twoheadrightarrow \dots \twoheadrightarrow \BC^d) \times \End_0(\BC^{d+n} \twoheadrightarrow \dots \twoheadrightarrow \BC^d) \times \\
	\times  \Hom(\BC^r, \BC^{d+n}) \text{ s.t. } [X,Y] = 0, \ v \text{ cyclic} \Big\}/P \qquad \qquad
	\end{multline}
	where $\End_0(\BC^{d+n} \twoheadrightarrow \dots \twoheadrightarrow \BC^d)$ denotes the vector space of nilpotent endomorphisms of $\BC^{d+n}$ which preserve a fixed flag of quotients of the specified dimensions (and $P \subset GL_{d+n}$ denotes the subgroup of automorphisms with the same property). If one is apprehensive about taking the quotient of a quasiprojective algebraic variety by the non-reductive group $P$, then one must employ the following trick. Instead of fixing a flag of quotients, let it vary and replace the datum $(X,Y,v)$ by
	$$
	(X,Y,v, \BC^{d+n} \twoheadrightarrow V_{d+n-1} \twoheadrightarrow \dots \twoheadrightarrow V_d)
	$$
	where $\dim V_i = i$ and the maps $X, Y \in \End_0(\BC^{d+n})$ are required to preserve the flag $\{V_i\}_{d \leq i < d+n}$. Then the $P$ quotient should be replaced by a $GL_{d+n}$ quotient. We will refrain from making this modification, so as to keep the presentation clear. \\
	
	\subsection{}
	\label{sub:stack 4}
	
	We fix a basis of $\BC^i$ so that the endomorphism $X$ takes the form
	\begin{equation}
	\label{eqn:blocks}
	\left( \begin{array}{cc|ccc} 
	* & * & 0 & 0 & 0 \\
	* & * & 0 & 0 & 0 \\ \hline
	* & * & 0 & 0 & 0 \\
	* & * & * & 0 & 0 \\
	* & * & * & * & 0 \end{array} \right) \qquad = \qquad \left( \begin{array}{c|c} X_0 & 0 \\ \hline x & X' \end{array} \right)
	\end{equation}
	where the block in the top left is an $d \times d$ matrix, and the lower triangular block in the bottom right is $n \times n$. We will use the notations $X_0, x, X'$ for the three blocks, as pictured above, and employ the analogous notations for $Y$. By analogy with the constructions in Subsection \ref{sub:adhm}, we define the affine algebraic variety
	$$
	\comm_n = \Big\{ (X,Y) \in \End_0(\BC^n \twoheadrightarrow \dots \twoheadrightarrow \BC^1)  
	\times \End_0(\BC^n \twoheadrightarrow \dots \twoheadrightarrow \BC^1) \text{ s.t. } [X,Y] = 0 \Big\}
	$$
	and the stack
	$$
	\stack_n = \comm_n/B
	$$
	where $B \subset GL_n$ denotes the Borel subgroup of lower triangular matrices. The variety $\comm_n$ has been studied in the literature, and we refer the reader to \cite{Basili} for a survey, where the author proves that the dimension and number of irreducible components of $\comm_n$ grow wildly as $n$ increases. However, we are interested only in the particular cases $n \in \{1,2,3,4\}$, when this variety is behaved rather mildly. \\
	
	\begin{proposition}
		\label{prop:basili}
		
		When $n \leq 3$, the variety $\ecomm_n$ is irreducible of dimension
		$$
		\frac {n(n+1)}2 - 1 = \dim B - 1
		$$
		When $n = 4$, it has the dimension above, but consists of two irreducible components. \\
		
	\end{proposition}
	
	\noindent For example, we have
	$$
	\comm_3 = \text{Spec } \BC[x_{21}, x_{31}, x_{32}, y_{21}, y_{31}, y_{32}] / (x_{32}y_{21} - y_{32}x_{21})
	$$
	the intuition being that $\{ x_{ij}, y_{ij} \}_{3 \geq i>j \geq 1}$ are the entries of the matrices $X$ and $Y$, and the expression $x_{32}y_{21} - y_{32}x_{21}$ is the only non-zero entry of the commutator $[X,Y]$. Similarly, $\comm_4$ is a subvariety of affine space with 12 coordinates $\{ x_{ij}, y_{ij} \}_{4 \geq i>j \geq 1}$, cut out by 3 equations. Its irreducible components are given by \\
	
	\begin{enumerate}
		
		\item the subvariety $Z_1 \subset \comm_4$ cut out by $x_{32} = y_{32} = 0$. Note that two of the three sub-diagonal coefficients of $[X,Y]$ vanish identically on $Z_1$, thus
		$$
		\dim Z_1 = 10-1 = 9
		$$
		
		\item the subvariety $Z_2 \subset \comm_4$ defined as the closure of the locus $(x_{32}, y_{32}) \neq (0,0)$, for which it is not hard to observe that
		$$
		\dim Z_2 = 12-3 = 9
		$$
		
	\medskip	
		
	\end{enumerate} 
	
	\noindent It is straightforward to prove that $\comm_4$ is generically reduced. Its singular locus is precisely the intersection $Z_1 \cap Z_2$, which has codimension 1 in $\comm_4$. \\
	
	\subsection{} Consider the natural map (notation as in \eqref{eqn:blocks})
	\begin{equation}
	\label{eqn:fiber map}
	\xymatrix{\quot_{d,\dots,d+n} \ar[d]^\zeta \\ \quot_d \times \stack_n} \qquad \qquad (X,Y,v) \mapsto (X_0,Y_0,v_0) \times (X',Y')
	\end{equation}
	where $v_0 \in \Hom(\BC^r,\BC^d)$ is the projection of the cyclic vector $v \in \Hom(\BC^r,\BC^{d+n})$ onto the quotient $\BC^{d+n} \twoheadrightarrow \BC^d$ (it is easy to see that projection preserves cyclicity). \\
	
	\begin{proposition}
		\label{prop:fiber}
		
		We have the following dimension estimate:
		\begin{equation}
		\label{eqn:fiber estimate}
		\dim \equot_{d,\dots,d+n} \leq \dim \equot_d + \dim \estack_n + r(d+n) 
		\end{equation}
		for any $d \geq 0$ and $n \in \{1,2,3,4\}$. \\
				
	\end{proposition}
	
	\noindent Theorem \ref{thm:quot 1} and Proposition \ref{prop:basili} tell us that $\quot_d$ and $\stack_n$ are equidimensional of dimensions $rd-1+\delta_d^0$ and $-1$, respectively, so Proposition \ref{prop:fiber} yields
	$$
	\dim \quot_{d,...,d+n} \leq 2rd + rn - 2 + \delta_d^0 
	$$
	for all $n \leq 4$. This leads to the following improvement of Corollary \ref{cor:def 1}. \\
	
	\begin{corollary}
		\label{cor:def 2}
		
(a)		For any sequence of defects $d_1,\dots,d_k \geq 0$ and any set partition $\lambda$ whose parts all have multiplicity $\leq 4$, we have
		\begin{equation}
		\label{eqn:def 2}
		\dim \fZ_\lambda^{\edefect d_1,\dots,d_k} \leq \econst + r(c^\efirst_2 + c_2^\elast) + k 
		\end{equation}
		and equality holds if and only if $d_1 = \dots = d_k = 0$. \\
		
		\noindent (b) The number of irreducible components of $\fZ_\lambda^{\edefect 0,\dots,0}$ of top dimension is $2^\#$, where $\#$ is the number of parts of $\lambda$ of multiplicity 4. \\
		
	\end{corollary}

\begin{proof} Statement (a) is an immediate consequence of \eqref{eqn:def 1} and \eqref{eqn:fiber estimate}. For statement (b), we invoke Lemma \ref{lem:locally isomorphic} to reduce the problem to the fact that $\quot_{0,\dots,n}$ has 1 irreducible component of top dimension if $n \in \{1,2,3\}$, and 2 such components if $n=4$. To see this, recall that $\quot_{0,\dots,n}$ is an open subset of a rank $rn$ affine bundle over $\stack_n$. As shown in Proposition \ref{prop:basili}, the latter stack is irreducible if $n\leq 3$, while if $n=4$ its two irreducible components were described in items (1) and (2) of the previous Subsection. The open condition that determines $\quot_{0,\dots,n}$ in the said affine bundle is given by requiring that $v$ of \eqref{eqn:flag of quot} be cyclic, and it is easy to see that this open condition does not ``miss" any of the two components.

\end{proof}
	
	\begin{proof} \emph{of Proposition \ref{prop:fiber}:} Consider the locally closed subset
		\begin{equation}
		\label{eqn:locally closed}
		S_{n,d,r,i} = \Big\{ \dim \ \zeta^{-1} \Big( (X_0,Y_0,v_0) \times (X',Y') \Big) = i \Big\} \subset \quot_d \times \stack_n 
		\end{equation}
		To prove the Proposition, it suffices to show that
		\begin{equation}
		\label{eqn:want codim}
		\text{codim } S_{n,d,r,i} \geq i - r(d+n)
		\end{equation}
		for all $n,d,r,i$. The fiber of $\zeta$ over a point
		\begin{equation}
		\label{eqn:fix a point}
		(X_0,Y_0,v_0) \times (X',Y') \in \quot_d \times \stack_n 
		\end{equation}
		consists of a choice of \\
		
\begin{enumerate}[leftmargin=*]	

\item[(1)] $n \times d$ blocks $x,y$ as in the bottom left corner of \eqref{eqn:blocks}, which satisfy
\begin{equation}
		\label{eqn:equations}
		x \cdot Y_0 + X' \cdot y = y \cdot X_0 + Y' \cdot x
		\end{equation}
		and \\
		
		\item[(2)] $rn$ extra coordinates that upgrade $v_0 \in \Hom(\BC^r, \BC^d)$ to $v \in \Hom(\BC^r, \BC^{d+n})$ \\
		
		\end{enumerate}
		
		\noindent The data in (1)--(2) above provide $2dn+rn$ affine coordinates, and the quotient by the unipotent part of the group $P$ (i.e. the block strictly lower triangular matrices, in the notation of \eqref{eqn:blocks}) subtracts $dn$ dimensions, on account of the $P$-action being free. After accounting for all the generators and relations above, we see that
		\begin{equation}
		\label{eqn:dim zeta}
		\dim \ \zeta^{-1}(\text{point \eqref{eqn:fix a point}}) = n(r+d) - \# \left( \text{of independent equations in \eqref{eqn:equations}} \right)
		\end{equation}
		Therefore, condition \eqref{eqn:want codim} is trivial unless
		\begin{equation}
		\label{eqn:box 1}
		\boxed{n>r}
		\end{equation}
		Given that we fixed $X_0,Y_0,X',Y'$, one may think of \eqref{eqn:equations} as a linear map
		$$
		\Hom(\BC^d,\BC^n) \times \Hom(\BC^d,\BC^n) \xrightarrow{\mu} \Hom(\BC^d,\BC^n)
		$$
		\begin{equation}
		\label{eqn:mu}
		\mu(x,y) = (x \cdot Y_0 - Y' \cdot x) - (y \cdot X_0 - X' \cdot y)
		\end{equation}
		The number of independent equations in \eqref{eqn:equations} is equal to $\dim \text{ Im }\mu$. \\
		
\begin{claim}
\label{claim:independent}

With the notation as above, we have
		\begin{equation}
		\label{eqn:dim mu 1}
		\dim \emph{Im } \mu \geq n \cdot \dim (\emph{Im }X_0 + \emph{Im }Y_0)
		\end{equation}
		
\end{claim}

\medskip

\begin{proof} Let us consider the flag of quotients $\BC^n \stackrel{\pi_n}\twoheadrightarrow \BC^{n-1} \stackrel{\pi_{n-1}}\twoheadrightarrow \dots \stackrel{\pi_1}\twoheadrightarrow \BC^1$ which is preserved by the nilpotent maps $X'$ and $Y'$, and consider the commutative diagram
$$
\xymatrix{\Hom(\BC^d,\BC^i) \times \Hom(\BC^d,\BC^i) \ar[r]^-{\mu_i} \ar[d] & \Hom(\BC^d,\BC^i) \ar[d] \\
\Hom(\BC^d,\BC^{i-1}) \times \Hom(\BC^d,\BC^{i-1}) \ar[r]^-{\mu_{i-1}} & \Hom(\BC^d,\BC^{i-1})}
$$
Above, the horizontal arrows are defined by the same formula as \eqref{eqn:mu}, and the vertical arrows are the operators of composition with $\pi_i$. Let $K = \text{Ker }\pi_i \cong \BC$. Since all the maps in the commutative diagram are linear, it suffices to show that
\begin{equation}
\label{eqn:one step ineq}
\dim \Big( \text{Im } \mu_i \cap \text{Hom}(\BC^d,K) \Big) \geq  \dim (\text{Im }X_0 + \text{Im }Y_0)
\end{equation}
Because $X'|_{\BC^i}$ and $Y'|_{\BC^i}$ annihilate $K$, then for any
$$
(x,y) \in \Hom(\BC^d,K) \times \Hom(\BC^d,K) \subset \Hom(\BC^d,\BC^i) \times \Hom(\BC^d,\BC^i)
$$
we have $\mu_i(x,y) = x Y_0 - y X_0$. Then we fix a basis of $\text{Im }X_0 + \text{Im }Y_0$ consisting of linearly independent vectors in $\text{Im }X_0 \cup \text{Im }Y_0$. We may define $x,y \in \Hom(\BC^d, K) \cong (\BC^d)^\vee$ to take any values on these basis vectors, which precisely implies \eqref{eqn:one step ineq}. 

\end{proof}		
		
\noindent The essential feature which allowed the proof of Claim \ref{claim:independent} to work was that $X'$ and $Y'$ are strictly lower triangular. Since the commuting nilpotent matrices $X_0$ and $Y_0$ also preserve a full flag of subspaces of $\BC^d$, a completely analogous argument yields
		\begin{equation}
		\label{eqn:dim mu 2}
		\dim \text{Im } \mu \geq d \cdot \dim (\text{Im }X' + \text{Im } Y')
		\end{equation}
		so we obtain the estimate
		\begin{multline*}
		\# \left( \text{of independent equations in \eqref{eqn:equations}} \right) \geq \\ \geq \max( n \cdot \dim (\text{Im }X_0 + \text{Im }Y_0), d \cdot \dim (\text{Im }X' + \text{Im } Y') )
		\end{multline*}

\medskip
		
		\begin{claim}
			\label{claim:estimate 1}
			
For any $4 \geq n > r$, the locus of points \eqref{eqn:fix a point} such that
$$
\max( n \cdot \dim (\emph{Im }X_0 + \emph{Im }Y_0), d \cdot \dim (\emph{Im }X' + \emph{Im } Y') ) = j \in \BN
$$
has codimension $\geq d(n-r)-j$ in $\equot_d \times \estack_n$, with the following exception. There is an irreducible component $G$ of the locus above for
$$
n=4, \ d=2, \ r=1, \ j = 4
$$
which has codimension $= 2(4-1)-5$ instead of $\geq 2(4-1)-4$. However, outside of codimension 1, points of $G$ have the property that $\dim \emph{Im }\mu \geq 5$ instead of $\geq 4$. \\
			
		\end{claim}
		
\noindent It is straightforward to see that formula \eqref{eqn:dim zeta} and Claim \ref{claim:estimate 1} imply the required inequality \eqref{eqn:want codim} (even the presence of $G$ does not violate this fact, because even though $G$ has dimension one larger than expected, outside of codimension 1 points of $G$ have the property that $\dim \text{Im } \mu$ is one larger than expected). 	\\

\begin{proof} \emph{of Claim \ref{claim:estimate 1}:} We may assume $d>0$, since the $d=0$ case is trivial. Because the matrices $X_0$ and $Y_0$ have a cyclic vector $v_0 \in \Hom(\BC^r, \BC^d)$, then
		$$
		\dim (\text{Im }X_0 + \text{Im }Y_0) \geq d-r
		$$
		Indeed, the existence of a proper subspace of $\BC^d$ that contains $\text{Im }X_0$, $\text{Im }Y_0$ and the $r$ columns of $v_0$ contradicts the cyclicity of $v_0$. The fact that $n(d-r) \geq d(n-r)$ establishes Claim \ref{claim:estimate 1} when $n\leq d$. Therefore, we are left to deal with the case
		\begin{equation}
		\label{eqn:box 2}
		\boxed{n>d}
		\end{equation}
		Taken together with the inequality \eqref{eqn:box 1}, as well as the fact that we only consider $n\in \{1,2,3,4\}$, this means that we are left with finitely many cases. \\
		
		\begin{claim}
			\label{claim:estimate 2}
			
			The locally closed substack $L_{n,\lambda} \subset \estack_n$ consisting of $(X',Y')$ with the property that $\dim (\emph{Im } X' + \emph{Im } Y') = \lambda$ has codimension
			$$
			\text{codim } L_{2,\lambda} = \begin{cases} 2 & \text{if } \lambda = 0 \\ 0 & \text{if } \lambda = 1 \end{cases}, \text{codim } L_{3,\lambda} = \begin{cases} 5 & \text{if } \lambda = 0 \\ 1 & \text{if } \lambda = 1  \\ 0 & \text{if } \lambda = 2 \end{cases}, \text{codim } L_{4,\lambda} = \begin{cases} 9 & \text{if } \lambda = 0 \\ 3 & \text{if } \lambda = 1  \\ 1 & \text{if } \lambda = 2  \\ 0 & \text{if } \lambda = 3 \end{cases}
			$$
			
		\end{claim} 
		
		\medskip
		
		\begin{claim}
			\label{claim:estimate 3}
			
			The locally closed subscheme $M_{d,\mu} \subset \equot_d$ consisting of $(X_0,Y_0,v_0)$ such that $\dim ( \emph{Im } X_0 + \emph{Im }Y_0 ) = \mu$ is empty unless $\mu \in \{d-r,...,d-1\}$. Moreover,
			$$
			\text{codim } M_{1,\mu} \geq \begin{cases} 0 & \text{if } \mu = 0 \end{cases}, \text{codim } M_{2,\mu} \geq \begin{cases} 3 & \text{if } \mu = 0 \\ 0 & \text{if } \mu = 1\end{cases}, \text{codim } M_{3,\mu} \geq \begin{cases} 8 & \text{if } \mu = 0 \\ 2 & \text{if } \mu = 1 \\ 0 & \text{if } \mu = 2 \end{cases}
			$$
			
			\medskip
			
		\end{claim} 
		
		\noindent The proof of Claims \ref{claim:estimate 2} and \ref{claim:estimate 3} are straightforward exercises, which we leave to the interested reader. For example, the $\mu = 0$ case of Claim \ref{claim:estimate 3} holds because the subscheme $\{X_0 = Y_0 = 0\} \subset \quot_d$ has codimension $d^2-1$. Indeed, this subscheme is nothing but the Grassmannian of $d$ dimensional quotients of $r$ dimensional space, so it has dimension $d(r-d)$, whereas $\quot_d$ has dimension $rd-1$. Note that Claim \ref{claim:estimate 1} reduces to the inequality
		\begin{equation}
		\label{eqn:seal}
		\text{codim } L_{n,\lambda} + \text{codim } M_{d,\mu} \geq d(n-r)- \text{max}(d \lambda, n\mu)  
		\end{equation}
for all $0 \leq \lambda < n$, $\text{max}(0,d-r) \leq \mu < d$ and $1 \leq d,r < n \leq 4$. We leave it to the interested reader to show that \eqref{eqn:seal} follows from Claims \ref{claim:estimate 2} and \ref{claim:estimate 3}, with one exception: the inequality fails for $n = 4$, $d = 2$, $r = 1$, $\lambda = 2$, $\mu = 1$, and in fact, in this case the difference between LHS and RHS is equal to $-1$. The failure of the inequality is due to the locally closed subset $G \subset  \quot_2 \times \stack_4$ consisting of points $(X_0,Y_0,v_0) \times (X',Y')$ such that $X_0, Y_0$ are generic, but $\text{Im } X' + \text{Im }Y'$ has dimension 2. After a change of basis, points of $G$ take the form
		$$
		X = \left( \begin{array}{cc|cccc} 
		0 & 0 &  &  &  &  \\
		s & 0 &  &  &  &  \\ \hline
		&  & 0 & 0 & 0 & 0 \\
		&  & x_1 & 0 & 0 & 0\\
		&  & x_2 & 0 & 0 & 0\\ 
		&  & x_3 & x_4 & x_5 & 0\\\end{array} \right) \qquad Y = \left( \begin{array}{cc|cccc} 
		0 & 0 &  &  &  &  \\
		t & 0 &  &  &  &  \\ \hline
		&  & 0 & 0 & 0 & 0 \\
		&  & y_1 & 0 & 0 & 0\\
		&  & y_2 & 0 & 0 & 0\\ 
		&  & y_3 & y_4 &y_5 & 0\\\end{array} \right)
		$$
		with $(s,t) \neq (0,0)$ and $y_1x_2 = x_1y_2$, $y_1x_4 + y_2x_5 = x_1y_4 + x_2y_5$. It is easy to see that $G$ has a single irreducible component of top dimension (compare it with $Z_1$ of Subsection \ref{sub:stack 4}). Moreover, $G$ contains the point $x_1 = \dots = x_5 = y_1 = \dots = y_5 = 1$, for which it is elementary to show that $\dim \text{Im } \mu \geq 5$. The fact that the latter inequality holds on $G$ outside of codimension 1 is a consequence of lower semicontinuity of rank. This establishes the final sentence of Claim \ref{claim:estimate 1}.
		
	\end{proof} \end{proof}
	
	\subsection{} 
	\label{sub:smooth locus}
	
	Since the locally closed subsets $\fZ_\lambda^{\defect d_1,...,d_k}$ stratify the scheme $\fZ_\lambda$, Corollary \ref{cor:def 2} implies that 
	\begin{equation}
	\label{eqn:def 3}
	\dim \fZ_\lambda = \const + r(c_2^\first + c_2^\last) + k
	\end{equation}
whenever $|\lambda| \leq 4$. Keeping in mind Definition \ref{def:exp dim}, we conclude the following. \\
	
	\begin{corollary}
	\label{cor:exp dim}
	
	If $|\lambda| \leq 4$, then $\fZ_\lambda$ has expected dimension. The number of its irreducible components of top dimension is
	$$
	\begin{cases} 1 &\text{if } \lambda \neq (x,x,x,x) \\ 2 &\text{if } \lambda = (x,x,x,x) \end{cases}
	$$ 
		
	\end{corollary}
	
\medskip	
	
\noindent Let us now consider $\fZ_{(x,y)}$, which parametrizes flags $(\CF_0 \subset_x \CF_1 \subset_y \CF_2)$. It was shown in \cite[Claim 4.22]{W univ} that the singular locus of $\fZ_{(x,y)}$ is given by
	\begin{equation}
	\label{eqn:triple}
	\fZ_{(x,x)}^{\text{split}} = \Big\{ (\CF_0 \subset_x \CF_1 \subset_x \CF_2) \text{ such that } \CF_2/\CF_0 \text{ is split} \Big\}
	\end{equation} 
	where split means that the length 2 sheaf $\CF_2/\CF_0$ supported at $x$ is a direct sum of length 1 skyscraper sheaves $\BC_x$. We will now estimate the dimension of \eqref{eqn:triple}. \\
	
\begin{proposition}	
\label{prop:dim split}	
	
$\fZ_{(x,x)}^{\emph{split}}$ has dimension 
$$
\econst + r(c^\efirst_2 + c_2^\elast) - 1
$$
and a single irreducible component of top dimension. \\
	
\end{proposition}

\begin{proof} Consider the stratification into locally closed subsets
	$$
	\fZ_{(x,x)}^{\text{split}} = \bigsqcup_{d=0}^{\infty} \fZ_{(x,x)}^{\text{split}, \text{def } d}
	$$
	where the $d$-th stratum parameterizes those flags as in \eqref{eqn:triple} where $\CF_2$ has defect $d$ at $x$. It suffices to show that
\begin{equation}
\label{eqn:want dim split}
\dim \fZ_{(x,x)}^{\text{split}, \text{def } d} \leq \const + r(c^\first_2 + c_2^\last) - 1
\end{equation}
with equality if and only if $d=0$.  The splitting condition in \eqref{eqn:triple} may be realized as follows. Consider the codimension 2 substack
		\begin{equation}
	\label{eqn:sublocus of stack}
\stack_2' = \Big \{X' = Y' = 0 \Big \} \subset \stack_2
	\end{equation}
	and the following fiber square where the map on the right is \eqref{eqn:fiber map}:
$$
\xymatrix{\quot_{d,d+1,d+2}' \ar[d]_{\zeta'} \ar@{^{(}->}[r] & \quot_{d,d+1,d+2} \ar[d]^\zeta \\
		\quot_d \times \stack_2' \ar@{^{(}->}[r] & \quot_d \times \stack_2}
$$
It is easy to see that 
$$
\fZ_{(x,x)}^{\text{split}, \text{def } d} \quad \text{is to} \quad \quot_{d,d+1,d+2}' \qquad \text{as} \qquad \fZ_{(x,x)}^{\text{def } d} \quad \text{is to} \quad \quot_{d,d+1,d+2}
$$
(as per Lemma \ref{lem:locally isomorphic}). With this in mind, \eqref{eqn:want dim split} is a consequence of the following analogue of Proposition \ref{prop:fiber}:
\begin{align*}
\dim \quot_{d,d+1,d+2}' &\leq \dim \quot_d + \dim \stack_2' + r(d+2) \\ &= \dim \quot_d + \dim \stack_2 - 2 + r(d+2)
\end{align*}
To prove the inequality above, one follows the proof of Proposition \ref{prop:fiber} closely, but with $X'  = Y' = 0$ throughout. In particular, \eqref{eqn:box 1} and \eqref{eqn:box 2} reduce the problem to the case $r = d = 1$, in which case Claim \ref{claim:estimate 1} is trivial. As for the statement of a single irreducible component of top dimension, this is because one only has equality in \eqref{eqn:want dim split} for $d=0$, and one argues as in the proof of Corollary \ref{cor:def 2}.

\end{proof}

\section{Geometric properties}
\label{sec:geom}

\medskip

\subsection{} We will now use the dimension estimates from the previous Section to obtain various geometric properties of the varieties $\fZ_{\lambda}$, which we have invoked in Section \ref{sec:mod}. \\
	
	\begin{proposition}
		\label{prop:flat morphism}
		
If $\fZ_\lambda$ is Cohen-Macaulay of expected dimension (see Definition \ref{def:exp dim}), then the map $p_S^i : \fZ_\lambda \rightarrow S$ which remembers the $i$-th support point is flat $\forall i$. \\
		
	\end{proposition}
	
	\begin{proof} Since $S$ is smooth and $\fZ_\lambda$ is Cohen-Macaulay, the Miracle Flatness Theorem asserts that all we need to show is that the fibers of the morphism $p_S^i : \fZ_\lambda \rightarrow S$ all have dimension equal to $\dim \fZ_\lambda - 2$. By upper semicontinuity, it suffices to show that all the fibers have dimension $ \leq \dim \fZ_\lambda - 2$. This is proved in the same way as Corollary \ref{cor:def 1}, since fixing one of the support points of the flag merely replaces the number $k$ in the right-hand side of \eqref{eqn:def 1} by $k-2$.  
		
	\end{proof}

\begin{proof} \emph{of Proposition \ref{prop:x}:} See \cite[Proposition 2.10]{Univ} for smoothness, and Corollary \ref{cor:exp dim} for the dimension and irreducibility statements. 

\end{proof}

\begin{proof} \emph{of Proposition \ref{prop:xx}:} See \cite[Proposition 4.21]{W univ} for smoothness, and Corollary \ref{cor:exp dim} for the dimension and irreducibility statements. Let us now show that the fiber square \eqref{eqn:square xx} is derived with excess. By Proposition \ref{prop:tower}, we may subdivide this fiber square into
\begin{equation}
\label{eqn:break up two squares}
		\xymatrix{
			\fZ_{(x,x)} \ar[d]_-{\pi_+} \ar@{^{(}->}[r]^-{\iota'} & \BP_{\fZ_{(x)}}(\Gamma^{*}(\CV_1)) \ar[d] \ar@{->>}[r]^-{\rho'} & \fZ_{(x)} \ar[d] \\
			\fZ_{(x)} \ar@{^{(}->}[r]^-{\iota} & \BP_{\CM \times S}(\CV_1) \ar@{->>}[r]^-{\rho} & \CM \times S}
\end{equation}
		where $\Gamma : \fZ_{(x)} \rightarrow \fZ_{(x)} \times S$ is the graph of the map $p_S : \fZ_{(x)} \rightarrow S$. The rightmost fiber square in \eqref{eqn:break up two squares} is already derived, because the maps $\rho$ and $\rho'$ are smooth. However, the leftmost fiber square in \eqref{eqn:break up two squares} is not derived, because
\begin{equation}
\label{eqn:actually equality}
\text{codim } \iota - \text{codim } \iota' = \dim \fZ_{(x,x)} - \dim \fZ_{(x)} - \dim \fZ_{(x)} + \dim (\CM \times S) = 1 
\end{equation}
(this follows from the dimension statements of Propositions \ref{prop:x} and \ref{prop:xx}, which have already been proved). Recall from Proposition \ref{prop:tower} that $\iota$ is cut out by the regular section
		$$
		\rho^*(\CW_1) \hookrightarrow \rho^*(\CV_1) \twoheadrightarrow \CO(1)
		$$
		hence is a complete intersection of codimension equal to the rank of $\CW_1$. On the other hand, $\iota'$ is cut out by the section
$$
\CW_1|_{\Gamma} \rightarrow \CV_1|_{\Gamma} \twoheadrightarrow \CO(1) = \CL_1
$$
In Proposition \ref{prop:tor}, we showed that the section above factors through
		\begin{equation}
		\label{eqn:excess section}
		\frac {\CW_1|_{\Gamma}}{\CL_2 \otimes p_S^*(\omega_S)} \longrightarrow \CO(1) = \CL_1 
		\end{equation}
where the left-hand side is a locally free sheaf of dimension 1 less than the rank of $\CW_1$. By \eqref{eqn:actually equality}, the section \eqref{eqn:excess section} is therefore regular, and this establishes the claim that the square  \eqref{eqn:square xx} is derived with excess bundle $\CL_2 \otimes \CL_1^{-1} \otimes p_S^*(\omega_S)$. 
		
	\end{proof}
	
\begin{proof} \emph{of Propositions \ref{prop:xy}, \ref{prop:xxx}, \ref{prop:xxy and yxx}:} See Corollary \ref{cor:exp dim} for the dimension and irreducibility statements. The fact that the schemes in question are l.c.i. is proved by showing that the fiber squares in the statements of the Propositions are derived. The latter statement is an immediate consequence of Proposition \ref{prop:tower} together with the fact that the spaces in the four corners of each square satisfy the analogue of \eqref{eqn:eq lci} (itself an immmediate consequence of the already proved fact that the northeast, southeast and southwest corners of these squares are l.c.i. of expected dimension, while the northwest corner has expected dimension, see Corollary \ref{cor:exp dim}). 

\end{proof}

	\subsection{}
	\label{sub:cm}
	
We will now prove that the variety $\fZ_{(x,y,x)}$ is Cohen-Macaulay. \\
	
	\begin{proof} \emph{of Proposition \ref{prop:xyx}:} See Corollary \ref{cor:exp dim} for the dimension and irreducibility statements. As a warm-up to proving the fact that $\fZ_{(x,y,x)}$ is Cohen-Macaulay, let us recall the explicit realization of $\fZ_{(x,y)}$ as a local complete intersection. We do so not only because the computation will serve as valuable illustration, but we will need a specific realization of $\fZ_{(x,y)}$ as the zero locus of a regular section. Therefore, consider the smooth scheme $\fZ_{(x)} \times S$ and take the map
		$$
		\xymatrix{\fZ_{(x,y)} \ar[d]^{\pi_+} \\
			\fZ_{(x)} \times S} \qquad \qquad \xymatrix{(\CF_0 \subset_x \CF_1 \subset_y \CF_2) \ar@{.>}[d] \\
			(\CF_0 \subset_x \CF_1, y)}
		$$
		A particular case of Proposition \ref{prop:tower} says that this map can be realized as
		\begin{equation}
		\label{eqn:projectivization one}
		\xymatrix{
			\fZ_{(x,y)} \ar[rd]_{\pi_+} \ar@{^{(}->}[r]^-\iota & \BP := \BP_{\fZ_{(x)} \times S} (\CW_1^\vee \otimes \omega_S) \ar[d] \\
			& \fZ_{(x)} \times S}
		\end{equation}
		where the closed embedding $\iota$ is cut out by the section
		\begin{equation}
		\label{eqn:sigma one}
		\sigma : \CO(-1) \rightarrow \CW_1 \otimes \omega_S^{-1} \rightarrow \CV_1 \otimes \omega_S^{-1}
		\end{equation}
		and $\CO(1)$ denotes the tautological line bundle on the projective bundle in \eqref{eqn:projectivization one}. Above and hereafter, we abuse notation by writing $\CW_1$ for the vector bundle on $\fZ_{(x)} \times S$, as well as for its pull-back to $\BP$. The dimension of $\fZ_{(x)} \times S$ is equal to
		$$
		\const + r(c_2(\CF_0) + c_2(\CF_1)) + 3
		$$
		and therefore the dimension of the projective bundle \eqref{eqn:projectivization one} is equal to
		$$
		\const + r(c_2(\CF_0) + c_2(\CF_1)) + 2 + \text{rank } \CW_1
		$$
		To obtain the closed embedding $\iota$ in \eqref{eqn:projectivization one}, we impose as many equations as the number of coordinates of the section \eqref{eqn:sigma one}, so the expected dimension of $\fZ_{(x,y)}$ is
	\begin{multline*}
		\const + r(c_2(\CF_0) + c_2(\CF_1)) + 2 + \text{rank } \CW_1 - \text{rank } \CV_1 = \\ = \const + r(c_2(\CF_0) + c_2(\CF_2)) + 2
	\end{multline*}
		Since this is equal to the actual dimension of $\fZ_{(x,y)}$ by \eqref{eqn:def 3}, we conclude that the coordinates of the section $\sigma$ form a regular sequence
		\begin{equation}
		\label{eqn:regular}
		\Sigma = \Big\{ \text{regular sequence formed by the coordinates of } \sigma \Big \}
		\end{equation}
In order to study $\Sigma$ in detail, recall that the vector bundle $\CV_1$ is given by \eqref{eqn:we can take}: from now on we will set $n = 0$ in \eqref{eqn:we can take}, in order to keep our formulas legible (otherwise, one would have to often tensor our formulas by line bundles coming from $S$, but this has no substantial effect on our argument). We have a map of locally free sheaves $\phi : \CV_1 \rightarrow \CL_1$ on $\fZ_{(x)}$ that is a particular case of the middle row of diagram \eqref{eqn:big diagram} when $i=1$. Putting all of these constructions together, we may consider the map of line bundles
		\begin{equation}
		\label{eqn:comp one}
		\phi \circ \sigma : \CO(-1) \xrightarrow{\taut} \CW_1 \otimes \omega_S^{-1} \rightarrow \CV_1 \otimes \omega_S^{-1} \xrightarrow{\phi \otimes \text{Id}} \CL_1 \otimes \omega_S^{-1}
		\end{equation}
on $\BP$. Since $\text{Ker }\phi = \CV_0$ is locally free, the map of line bundles $\phi \circ \sigma$ may be thought of as one of the coordinates of the section $\sigma$ in any local trivialization. Therefore, it is one of the entries of the regular sequence $\Sigma$, so let us compute it explicitly on a closed point of the projective bundle \eqref{eqn:projectivization one}. We may work locally, so we assume that $x$ and $y$ are points of $\BA^2 = \text{Spec } \BC[s_1,s_2]$ (alternatively, we need to take generators of the maximal ideals at the closed points $x$ and $y$, but the explanation is analogous) given by coordinates $(x_1,x_2)$ and $(y_1,y_2)$, respectively. As we have seen in the proof of Proposition \ref{prop:tor}, the map $\taut$ in \eqref{eqn:comp one} is identified with
		$$
		\Gamma(S,\CF_2/\CF_1) \otimes \Tor_2(\CO_{\Gamma^y}, \CO_{\Gamma^y}) \rightarrow \Tor_0(\CW_1, \CO_{\Gamma^y})
		$$
		where $\Gamma^y : \BP \hookrightarrow \BP \times S$ is the graph of the map that remembers the point $y$. The Tor groups above may be computed using the following length 2 resolution of $\CO_{\Gamma^y}$:
		$$
		\left[ \CO \xrightarrow{s_2 - y_2, y_1 - s_1} \CO \oplus \CO \xrightarrow{s_1 - y_1, s_2 - y_2} \CO \right] \qis \CO_{\Gamma^y}
		$$
		Therefore, the map denoted $\taut$ in \eqref{eqn:comp one} takes a generator $v$ of the one-dimensional vector space $\Gamma(S,\CF_2/\CF_1)$ to the section $(s_1-y_1) \tau_1 + (s_2 - y_2) \tau_2$, where $\tau_1, \tau_2 \in \Gamma(S,\CF_1)$ have the property that $\tau_1 = (s_2-y_2) f$ and $\tau_2 = (y_1-s_1)f$ for some preimage $f \in \Gamma(S,\CF_2)$ of $v\in \Gamma(S,\CF_2/\CF_1)$ (global sections exist due to our assumption that $n = 0$ in \eqref{eqn:we can take}, otherwise we would need to tensor $\CF_1$ and $\CF_2$ by a very ample line bundle). Therefore, the entire composition \eqref{eqn:comp one} takes the generator $v$ to
		$$
		\text{the image of } (s_1-y_1) \tau_1 + (s_2 - y_2) \tau_2 \text{ in } \Gamma(S,\CF_1/\CF_0)
		$$
		In coordinates, if we identify the rank 2 bundle $\Gamma(S,\CF_2/\CF_0)$ with $\BC^2$, then the $\CO_S = \BC[s_1,s_2]$-module structure on this 2-dimensional space is given by matrices
		\begin{equation}
		\label{eqn:matrices one}
		s_1 = \begin{pmatrix} y_1 & 0 \\ a_1 & x_1 \end{pmatrix} \qquad s_2 = \begin{pmatrix} y_2 & 0 \\ a_2 & x_2 \end{pmatrix}
		\end{equation}
		and so the assignment \eqref{eqn:comp one} takes
		$$
		\begin{pmatrix} 1 \\ 0 \end{pmatrix} \leadsto \begin{pmatrix} 0 \\ (x_1-y_1)a_2 - (x_2-y_2)a_1   \end{pmatrix}
		$$
		Therefore, we conclude that
		\begin{equation}
		\label{eqn:equation}
		(x_1-y_1)a_2 - (x_2-y_2)a_1  
		\end{equation}
		is one of the elements of the regular sequence \eqref{eqn:regular}. Since over local Noetherian rings, one may permute the order of elements of a regular sequence, we will assume \eqref{eqn:equation} to be the last element of the regular sequence. \\
		
		\begin{claim}
			\label{claim:unexpected}
			
			Before imposing the equation \eqref{eqn:equation}, the other elements of the regular sequence cut out a regular local ring. In more mathematical terms, the local rings of $\fZ_{(x,y)}$ are quotients of regular local rings by the single equation \eqref{eqn:equation}. \\
						
		\end{claim}
		
		\noindent Indeed, the Claim follows from the fact (proved in \cite{W univ}) that the tangent spaces to $\fZ_{(x,y)}$ have expected dimension, except at a closed point such that $x = y$ and $\CF_2/\CF_0$ is split, where the dimension of the tangent space jumps by 1. In the local coordinates \eqref{eqn:equation}, this corresponds to $a_1 = a_2 = x_1-y_1 = x_2-y_2 = 0$: therefore, equation \eqref{eqn:equation} fails to cut down the dimension of the tangent spaces on the split locus. Since $\fZ_{(x)}$ is smooth, this means that all other elements in the regular sequence do cut out regular subschemes, and regularity is broken precisely by \eqref{eqn:equation}. \\
		
		\noindent Armed with the discussion above, we are ready to analyze the scheme $\fZ_{(x,y,x)}$:
		$$
		\xymatrix{\fZ_{(x,y,x)} \ar[d]^{\pi_+} \\
			\fZ_{(x,y)}} \qquad \qquad \xymatrix{(\CF_0 \subset_x \CF_1 \subset_y \CF_2 \subset_x \CF_3) \ar@{.>}[d] \\
			(\CF_0 \subset_x \CF_1 \subset_y \CF_2)}
		$$
		A particular case of Proposition \ref{prop:tower} says that this map can be realized as
		\begin{equation}
		\label{eqn:projectivization two}
		\xymatrix{
			\fZ_{(x,y,x)} \ar[rd]_{\pi_+} \ar@{^{(}->}[r]^-\iota & \BP':= \BP_{\fZ_{(x,y)}} (\Gamma^{x*}(\CW_2^\vee \otimes \omega_S)) \ar[d] \\
			& \fZ_{(x,y)}}
		\end{equation}
		where $\Gamma^x : \fZ_{(x,y)} \rightarrow \fZ_{(x,y)} \times S$ is the graph of the map that remembers the point $x$, and the closed embedding $\iota$ is cut out by the section
		\begin{equation}
		\label{eqn:sigma two}
		\sigma : \CO(-1) \rightarrow \Gamma^{x*}(\CW_2 \otimes \omega_S^{-1}) \rightarrow \Gamma^{x*}(\CV_2 \otimes \omega_S^{-1})
		\end{equation}
(we abuse notation by identifying the vector bundles on $\fZ_{(x,y)}$ above with their pull-backs to $\BP'$) where $\CO(1)$ denotes the tautological line bundle on $\BP'$. By \eqref{eqn:def 3}, the dimension of $\fZ_{(x,y)}$ is equal to
		$$
		\const + r(c_2(\CF_0) + c_2(\CF_2)) + 2
		$$
		and therefore the dimension of the projective bundle \eqref{eqn:projectivization two} is equal to
		$$
		\const + r(c_2(\CF_0) + c_2(\CF_2)) + 1 + \text{rank } \CW_2
		$$
		To obtain the closed embedding $\iota$ in \eqref{eqn:projectivization two}, we impose as many equations as the number of coordinates of \eqref{eqn:sigma two}, so the expected dimension of $\fZ_{(x,y,x)}$ is
		$$
		\const + r(c_2(\CF_0) + c_2(\CF_2)) + 1 + \text{rank } \CW_2 - \text{rank } \CV_2 = \const + r(c_2(\CF_0) + c_2(\CF_3)) + 1
		$$
		However, according to \eqref{eqn:def 3}, the actual dimension of $\fZ_{(x,y,x)}$ is 1 bigger than the above expected dimension. Therefore, $\fZ_{(x,y,x)}$ is an almost complete intersection, i.e. cut out by one more equation than its dimension. We have the following criterion for when such schemes are Cohen-Macaulay. \\
		
		\begin{claim}
			\label{claim:cm}
			
			Consider a Cohen-Macaulay local ring $R$ and a collection of elements $f_0,\dots,f_n \in R$ such that the quotient ring $R/(f_0,\dots,f_n)$ has codimension $n$ in $R$. If $R/(f_0,\dots,f_i)$ is Cohen-Macaulay of codimension $i$ in $R$ for some $i \geq 0$, then $R/(f_0,\dots,f_n)$ is Cohen-Macaulay. \\
			
		\end{claim}
		
		\noindent The claim is proved by induction and the well-known fact that if an element $f$ in a Cohen-Macaulay local ring $R$ has the property that $\dim R/(f) = \dim R - 1$, then $f$ is a non-zero divisor and $R/(f)$ is Cohen-Macaulay. We will apply Claim \ref{claim:cm} to our situation by constructing a map of locally free sheaves on $\BP'$:
		\begin{equation}
		\label{eqn:cok}
		\phi : \Gamma^{x*}(\CV_2 \otimes \omega_S^{-1}) \twoheadrightarrow \CN
		\end{equation}
		such that $\CN$ has rank 2 and the closed subscheme
		\begin{equation}
		\label{eqn:subscheme}
\Big\{\phi \circ \sigma = 0 \Big\} \hookrightarrow \BP'
		\end{equation}
		is Cohen-Macaulay of codimension 1. Once we do so, the fact that $\fZ_{(x,y,x)}$ is Cohen-Macaulay of the expected dimension follows from Claim \ref{claim:cm} with $i = 1$. \\
		
		\noindent To construct the map \eqref{eqn:cok}, we go back to diagram \eqref{eqn:big diagram} for $i=2$. Its middle row consists of locally free sheaves, so we may restrict it to $\Gamma^x$:
		$$
		0 \rightarrow \Gamma^{x*}(\CV_1) \rightarrow \Gamma^{x*}(\CV_2) \rightarrow \CL_2 \rightarrow 0
		$$
(the right-most term is $\CL_2$ because we assumed $n=0$ in \eqref{eqn:we can take}, otherwise we could have had to twist by a line bundle). The push-out of the short exact sequence above with respect to the tautological map $\Gamma^{x*}(\CV_1) \twoheadrightarrow \CL_1$ that we have on $\fZ_{(x,y)}$ yields
		$$
		\xymatrix{
			0 \ar[r] & \Gamma^{x*}(\CV_1) \ar@{->>}[d] \ar[r] & \Gamma^{x*}(\CV_2) \ar@{->>}[d]^{\phi} \ar[r] & \CL_2 \ar@{=}[d] \ar[r] & 0 \\
			0 \ar[r] & \CL_1 \ar[r] & \CN \otimes \Gamma^{x*}(\omega_S) \ar[r] & \CL_2 \ar[r] & 0}
		$$
		where $\CN$ is defined by the short exact sequence on the bottom row. This defines the map \eqref{eqn:cok}. We will now write out the entries of the map $\phi$ explicitly in coordinates, just as we did in the discussion immediately preceding Claim \ref{claim:unexpected}. We still work locally, so assume that $x$ and $y$ are points of $\BA^2 = \text{Spec } \BC[s_1,s_2]$ given by coordinates $(x_1,x_2)$ and $(y_1,y_2)$, respectively. Then we identify the rank 3 bundle $\Gamma(S,\CF_3/\CF_0)$ with $\BC^3$, and then the $\CO_S = \BC[s_1,s_2]$-module structure on this 3-dimensional space is given by matrices
		\begin{equation}
		\label{eqn:matrices two}
		s_1 = \begin{pmatrix} x_1 & 0 & 0 \\ b_1 & y_1 & 0 \\ c_1 & a_1 & x_1 \end{pmatrix} \qquad s_2 = \begin{pmatrix} x_2 & 0 & 0 \\ b_2 & y_2 & 0 \\ c_2 & a_2 & x_2 \end{pmatrix}
		\end{equation}
		The matrices \eqref{eqn:matrices one} are precisely the bottom right $2 \times 2$ blocks of \eqref{eqn:matrices two}. Therefore, the composition $\phi \circ \sigma : \CO(-1) \rightarrow \CN$ is given explicitly in coordinates by
		$$
		\begin{pmatrix} 1 \\ 0 \\ 0 \end{pmatrix} \leadsto \begin{pmatrix} 0 \\ b_1 (x_2-y_2) - b_2(x_1-y_1) \\ b_1 a_2 - a_1 b_2\end{pmatrix}
		$$
		Therefore, we conclude that the subscheme \eqref{eqn:subscheme} is obtained by imposing the equations $b_1 (x_2-y_2) = b_2(x_1-y_1)$ and $b_1 a_2 = a_1 b_2$ on the local rings of a projective bundle over the scheme $\fZ_{(x,y)}$. As we have seen in Claim \ref{claim:unexpected}, the local rings of the scheme $\fZ_{(x,y)}$ were obtained by imposing the equation $a_1(x_2-y_2) = a_2(x_1-y_1)$ of \eqref{eqn:equation} in a regular local ring. Therefore, we are in a particular case of the following general situation. \\
		
		\begin{claim}
			\label{claim:cm 2}
			
			Suppose $R$ is a regular ring with given elements $a_1, b_1, a_2, b_2, d_1, d_2$, and let $I = (a_1d_2 - a_2d_1, b_1 d_2 - b_2 d_1, a_1b_2 - b_1 a_2)$. Then $R/I$ is Cohen-Macaulay of codimension 2 if \\
			
			\begin{enumerate}  
				
				\item $a_1b_2 - b_1 a_2$ is not a zero-divisor in $R/(d_1,d_2)$ \\
				
				\item $R/(a_1d_2 - a_2d_1, b_1 d_2 - b_2 d_1)$ is codimension 2 in $R$ \\
				
			\end{enumerate}
			
		\end{claim}
		
		\noindent Let us show that the hypotheses of Claim \ref{claim:cm 2} hold in the local rings of \eqref{eqn:subscheme} with $a_1,b_1,a_2,b_2$ as in \eqref{eqn:matrices two} and $d_1 = x_1 - y_1$, $d_2 = x_2 - y_2$. For item (1), we observe that imposing $d_1 = d_2 = 0$ has the effect of setting the support points $x, y$ equal to each other, which cuts out the smooth subscheme $\fZ_{(x,x)} \hookrightarrow \fZ_{(x,y)}$. Since $\BP'|_{\fZ_{(x,x)}}$ is smooth, it does not have any non-trivial zero-divisors, so must show that
\begin{equation}
\label{eqn:aforementioned}
a_1b_2 - b_1a_2 \neq 0
\end{equation}
in the local rings of $\BP'|_{\fZ_{(x,x)}}$. If \eqref{eqn:aforementioned} failed to hold, then
$$
\#\Big\{\text{equations cutting out }\iota|_{\fZ_{(x,x)}}\Big\} \leq \#\Big\{\text{equations cutting out }\iota \Big\} - 1
$$		
and so
\begin{multline*}
\dim \fZ_{(x,x,x)} = \dim \BP'|_{\fZ_{(x,x)}} - \#\Big\{\text{equations cutting out }\iota|_{\fZ_{(x,x)}}\Big\} \geq \\ \geq \dim \BP' - \#\Big\{\text{equations cutting out }\iota \Big\} = \dim \fZ_{(x,y,x)}
\end{multline*}
which would contradict \eqref{eqn:def 3}. As for item (2), we must show that $b_1d_2 - b_2d_1$ is not a zero-divisor in the local rings of $\BP'$. This is the case because $b_1,b_2$ are linear coordinates in a projective bundle over $\fZ_{(x,y)}$, so the only way $b_1d_2 - b_2d_1$ could be a zero divisor is if $d_1$ and $d_2$ were zero-divisors. If this were the case, then $\fZ_{(x,x)}$ would have the same dimension as $\fZ_{(x,y)}$, which would contradict \eqref{eqn:def 3}. \\
		
		\begin{proof} \emph{of Claim \ref{claim:cm 2}:} By the Auslander-Buchsbaum formula, it is enough to show that the ring $R/I$ has projective dimension 2 as an $R$--module. In fact, we claim that a projective resolution is given by
			$$
			0 \longrightarrow R^2 \xrightarrow{\begin{pmatrix} b_1 & b_2 \\ -a_1 & -a_2 \\ d_1 & d_2 \end{pmatrix}} R^3 \xrightarrow{\begin{pmatrix} d_1a_2 - a_1d_2 & d_1b_2 - b_1d_2 & a_1b_2-a_2b_1 \end{pmatrix}} R \longrightarrow R/I \longrightarrow 0
			$$
Exactness at $R$ and $R/I$ is obvious. As for exactness at $R^2$, if the first map failed to be injective then $a_1b_2 - a_2b_1$, $d_1b_2-b_1d_2$ and $d_1a_2-a_1d_2$ would all be zero-divisors (hence zero in the regular ring $R$) which is not allowed by property (1). It remains to prove exactness at $R^3$. Assume that we have $(x,y,z) \in R^3$ such that
			$$
			(d_1a_2 - a_1d_2)x + (d_1b_2 - b_1d_2) y + (a_1b_2-a_2b_1) z = 0
			$$
			Clearly, $(a_1b_2-a_2b_1)z \in (d_1,d_2)$, so property (1) implies $z = d_1 m + d_2 n$. Therefore, we may rewrite the relation above as
			$$
			(d_1a_2 - a_1d_2)(b_1m + b_2 n + x') + (d_1b_2 - b_1d_2)(-a_1m - a_2 n + y') + (a_1b_2-a_2b_1)(d_1 m + d_2 n) = 0
			$$
			where $x' = x - b_1 m - b_2 n$ and $y' = y + a_1 m + a_2 n$. The relation above reduces to
			$$
			(d_1a_2 - a_1d_2)x' + (d_1b_2 - b_1d_2)y' = 0 
			$$
			and then item (2) implies that we have $x' = (d_1b_2 - b_1d_2)u$ and $y' = -(d_1a_2 - a_1d_2)u$ for some $u$. Therefore, we have
			$$
			\begin{pmatrix}
			x \\ y \\ z 
			\end{pmatrix} = \begin{pmatrix}
			x' + b_1m + b_2 n\\ y' - a_1 m - a_2n  \\ d_1 m + d_2 n 
			\end{pmatrix} = \begin{pmatrix}
			b_1(m-d_2u) + b_2(n+d_1u) \\ -a_1(m-d_2u) -a_2(n+d_1u) \\ d_1(m-d_2u) + d_2(n+d_1u)
			\end{pmatrix}
			$$
			which shows that the triple $(x,y,z)$ came from the image of the $3 \times 2$ matrix. 
			
		\end{proof}
		
	\end{proof}
		
\begin{proof} \emph{of Proposition \ref{prop:xxxx} and \ref{prop:xxyx and xyxx}:} See the Proof of Propositions \ref{prop:xy}, \ref{prop:xxx}, \ref{prop:xxy and yxx}.

\end{proof}		
		
\subsection{}

We will now prove the normality of some of the schemes $\fZ_\lambda$ for $|\lambda| \leq 4$. \\
	
	\begin{proof} \emph{of Proposition \ref{prop:normal}:} Since all the schemes in question are Cohen-Macaulay, it suffices to show that they are singular in codimension $\geq 2$. Let us note that the required statement for $\lambda = (x,y)$ follows from Proposition \ref{prop:dim split}, which shows that the singular locus 
$$
\fZ_{(x,x)}^{\text{split}} \subset \fZ_{(x,y)}
$$
has codimension 3. As for the other $\lambda$'s in \eqref{eqn:lambda}, we will only prove the case $\lambda = (x,x,y,x)$, as the analysis in the other cases is analogous and no more difficult. Below, we will list certain locally closed subsets of
		\begin{equation}
		\label{eqn:flag}
		\fZ_{(x,x,y,x)} = \Big\{ (\CF_0 \subset_x \CF_1 \subset_x \CF_2 \subset_y \CF_3 \subset_x \CF_4) \Big\}
		\end{equation}
		which form a stratification of the variety $\fZ_{(x,x,y,x)}$. To show normality of this variety, one may ignore all strata of codimension 2 and higher. \\
		
		\begin{enumerate}[leftmargin=*]
			
			\item when $x = y$ and $\CF_4$ has non-zero defect $>0$ at $x$, Corollary \ref{cor:def 2} shows that the corresponding locally closed subset of \eqref{eqn:flag} has codimension $\geq 2$, hence can be ignored; \\
			
			\item when $x = y$ and $\CF_4$ is locally free near $x$, the scheme $\fZ_{(x,x,y,x)}$ is locally isomorphic to $\CM \times \Sigma$, where
			\begin{multline*} 
			\Sigma = \Big \{(\CO^{\oplus r} \twoheadrightarrow \CQ_4 \twoheadrightarrow \CQ_3 \twoheadrightarrow \CQ_2 \twoheadrightarrow \CQ_1), \text{ supp } \CQ_4 = \{x,x,y,x\}, \\
			\text{supp } \CQ_3 = \{x,y,x\}, \text{supp } \CQ_2 = \{y,x\}, \text{supp } \CQ_1 = \{x\} \Big \}
			\end{multline*}
			Compare $\Sigma$ with the scheme $\quot_{0,1,2,3,4}$ of Definition \ref{def:quot 2}. We must prove that $\Sigma$ is normal. Since we may work locally, we assume that the base surface is $S = \BA^2$, and we will normalize $x = (0,0)$ and $y = (a,b)$. By analogy with Subsection \ref{sub:adhm}, the scheme $\Sigma$ parameterizes triples $(X,Y,v)$ where
			$$
			X = \begin{pmatrix}
			0 & 0 & 0 & 0 \\
			x_{21} & a & 0 & 0 \\
			x_{31} & x_{32} & 0 & 0 \\
			x_{41} & x_{42} & x_{43} & 0 
			\end{pmatrix}, \qquad Y = \begin{pmatrix}
			0 & 0 & 0 & 0 \\
			y_{21} & b & 0 & 0 \\
			y_{31} & y_{32} & 0 & 0 \\
			y_{41} & y_{42} & y_{43} & 0 
			\end{pmatrix}
			$$
			such that $[X,Y] = 0$, and $v \in \Hom(\BC^r,\BC^4)$ is cyclic for $X,Y$. Therefore, $\Sigma$ is an open subset of an affine bundle over the affine variety 
$$
\overline{\Sigma} = \Big\{(X,Y) \text{ as above, such that } [X,Y] = 0 \Big\}
$$
It suffices to show that $\overline{\Sigma}$ is normal. As an affine variety, it is cut out by
\begin{align*}
&ay_{21} - bx_{21} = 0	\\
&ay_{32} - bx_{32} = 0 \\
&x_{32}y_{21} - y_{32} x_{21} = 0 \\
&x_{43}y_{32} - y_{43} x_{32} + bx_{42} - ay_{42} = 0 \\
&x_{42}y_{21} + x_{43}y_{31} - y_{42}x_{21} - y_{43}x_{31} = 0
\end{align*}
in the 14-dimensional space of entries $a,b,x_{ij}, y_{ij}$. Using Macaulay2, one can check that $\overline{\Sigma}$ is a Cohen-Macaulay irreducible affine variety of dimension $10$. The tangent space to $\overline{\Sigma}$ at a given closed point $(X,Y)$ is the kernel of the map
$$
\mu : (\partial_X, \partial_Y) \mapsto [X,\partial_Y] + [\partial_X,Y]
$$
where $(\partial_X, \partial_Y)$ runs over the 14-dimensional affine space of pairs of matrices with the same pattern of zeroes as $(X,Y)$. Thus, $\overline{\Sigma}$ is smooth at a point $(X,Y)$ iff $\mu$ has 10-dimensional kernel, so let us see when this happens. \\

\noindent Assume first that $(a,b) \neq (0,0)$, and by taking appropriate linear combinations of $X, Y$ we may assume $(a,b) = (1,0)$. Then one can successively solve the equation $\Ker \mu = 0$ for the entries of the matrices $(\partial_X, \partial_Y)$ one by one, concluding that the kernel of $\mu$ fails to be 10-dimensional when $x_{43} = y_{21} = y_{31} = y_{32} = y_{42} = y_{42} = x_{31} - x_{32}x_{21} = 0$. The dimension of this locus is 5, and after we add 2 dimensions to reverse the choice $(a,b) = (1,0)$, we conclude that $\overline{\Sigma}$ is smooth in codimension 3 on the locus $(a,b) \neq (0,0)$. \\

\noindent Now assume $(a,b) = (0,0)$. The corresponding subvariety is precisely $\text{Comm}_4 \subset \overline{\Sigma}$, and we have seen at the end of Subsection \ref{sub:stack 4} that it is 9-dimensional and has two irreducible components $Z_1$ and $Z_2$. Thus, it suffices to take a generic point $(X,Y)$ in each of these components, and show that the map $\mu$ has 10-dimensional kernel at the chosen point. It is straightforward to show that the following choices will do:
$$
	X = \begin{pmatrix}
			0 & 0 & 0 & 0 \\
			1 & 0 & 0 & 0 \\
			0 & 0 & 0 & 0 \\
			0 & 0 & 1 & 0 
			\end{pmatrix}, \qquad Y = \begin{pmatrix}
			0 & 0 & 0 & 0 \\
			0 & 0 & 0 & 0 \\
			1 & 0 & 0 & 0 \\
			0 & 1 & 0 & 0 
			\end{pmatrix}
			$$
			and
			$$
			X = \begin{pmatrix}
			0 & 0 & 0 & 0 \\
			1 & 0 & 0 & 0 \\
			0 & 1 & 0 & 0 \\
			0 & 0 & 1 & 0 
			\end{pmatrix}, \qquad Y = \begin{pmatrix}
			0 & 0 & 0 & 0 \\
			0 & 0 & 0 & 0 \\
			0 & 0 & 0 & 0 \\
			0 & 0 & 0 & 0 
			\end{pmatrix}
			$$

			\item when $x \neq y$ and $\CF_4$ has non-zero defect at both $x$ and $y$, Corollary \ref{cor:def 2} shows that the corresponding locally closed subset has codimension $\geq 2$, hence can be ignored; \\	
			
			\item when $x \neq y$ and $\CF_4$ is locally free near $x$, the scheme $\fZ_\lambda$ is locally isomorphic to $\fZ_1 \times \quot_{0,1,2,3}$, and thus normal (indeed, $\quot_{0,1,2,3}$ is open in an affine bundle over $\stack_3$, and the latter is easily seen to be normal); \\
			
			\item when $x \neq y$ and $\CF_4$ has non-zero defect at $x$ but is locally free near $y$, the scheme $\fZ_\lambda$ is locally isomorphic to $\fZ_{(x,x,x)} \times \quot_1$, so it suffices to show that $\fZ_{(x,x,x)}$ is normal near any point with defect $\geq 1$. To this end, consider the stratification
			$$
			\fZ_{(x,x,x)} = \bigsqcup_{d = 0}^\infty \fZ_{(x,x,x)}^{\defect d}
			$$
			in terms of the defect of $\CF_4$ at $x$. As shown in Corollary \ref{cor:def 2}, the open subset $\fZ_{(x,x,x)}^{\defect 0}$ is the only top-dimensional stratum; as it is locally isomorphic to $\CM \times\quot_{0,1,2,3}$, it is normal. Tracing through the proof of Proposition \ref{prop:fiber} shows that other strata can have codimension 1 only if $r=1$ and $d=1$, in which case the moduli space of stable sheaves may be replaced with the Hilbert scheme of points on $S$. Therefore, it suffices to show that the scheme $\Sigma'$ parametrizing flags of ideals $(I_0 \subset_x I_1 \subset_x I_2 \subset_x I_3)$ is normal near any ideal $I_3$ of defect precisely 1 at $x$. Since the problem is local, we may assume $S = \BA^2$ and $x = (0,0)$, in which case $\Sigma'$ may be described by analogy with Subsection \ref{sub:adhm} as the space of triples $(X,Y,v)$ such that
			$$
			X = \begin{pmatrix}
			a & 0 & 0 & 0 \\
			x_{21} & 0 & 0 & 0 \\
			x_{31} & x_{32} & 0 & 0 \\
			x_{41} & x_{42} & x_{43} & 0 
			\end{pmatrix}, \qquad Y = \begin{pmatrix}
			b & 0 & 0 & 0 \\
			y_{21} & 0 & 0 & 0 \\
			y_{31} & y_{32} & 0 & 0 \\
			y_{41} & y_{42} & y_{43} & 0 
			\end{pmatrix}
			$$
			such that $[X,Y] = 0$, and $v \in \BC^4$ is cyclic for $X,Y$. As before, one may explicitly write down the quadratic equations among the $\{x_{ij}, y_{ij},a,b\}_{4 \geq i > j \geq 1}$ and conclude that $\Sigma'$ is normal. The method of proof is analogous to that in item (2) above, so we leave it as an analogous exercise to the interested reader.
			
		\end{enumerate}
		
	\end{proof}

\subsection{} 
	
	\noindent We will now study the schemes $\fY$, $\fY_-$, $\fY_+$, $\fY_{-+}$ of Subsection \ref{sub:def y}. \\
	
	\begin{proposition}
		\label{prop:y geom}
		
(a) The schemes $\fY$, $\fY_-$, $\fY_+$, $\fY_{-+}$ have expected dimension, i.e. the dimension of the respective spaces on the bottom of \eqref{eqn:pi top} or \eqref{eqn:pi bot}. \\

\noindent (b) The schemes $\fY$, $\fY_-$, $\fY_+$, $\fY_{-+}$ have $1$, $1$, $1$, $2$ irreducible components of expected dimension, respectively. \\
		
	\end{proposition}
	
	\noindent Recall that when we say that $\fY$ (or any of the other 3 schemes) has $\#$ irreducible components of expected dimension, what we actually mean is that it has $\#$ such irreducible components over each connected component of the moduli space $\CM$. \\
	
	\begin{proof} The map $\fY \xrightarrow{\pi^\uparrow} \fZ_{(x,y)}$ is surjective. Over a closed point $(\CF_0 \subset_x \CF_1 \subset_y \CF_2)$ $\in \fZ_{(x,y)}$, the fiber of this map is either a single point, or a copy of $\BP^1$. The latter happens if and only if $x = y$ and $\CF_2/\CF_0$ is split, so we conclude that the only points where the fibers jump are those of $\fZ_{(x,x)}^\split$. Since the locus $\fZ_{(x,x)}^\split$ has codimension $3$ in $\fZ_{(x,y)}$ (see Proposition \ref{prop:dim split}), and the dimensions of the fibers above such points are all 1, this implies that $\fY$ will have the same dimension as $\fZ_2$ (in Subsection \ref{sub:y smooth}, we will show that $\fY$ is actually smooth, so the map $\pi^\uparrow$ can be thought of as the blow-up of the singular locus of $\fZ_{(x,y)}$). The fact that $\fY$ has a single irreducible component of top dimension is clear, since $\fZ_{(x,y)}$ is irreducible. \\
		
		\noindent The proof for the schemes $\fY_-$, $\fY_+$, $\fY_{-+}$ is analogous. For example, the fibers of
		$$
		\pi^\uparrow : \fY_{-+} \longrightarrow \fZ_{(x,x,y,x)} 
		$$ 
		consist of a single point or a copy of $\BP^1$, with the latter situation only happening over closed points
		$$
		(\CF_0 \subset_x \CF_1 \subset_x \CF_2 \subset_y \CF_3 \subset_x \CF_4) \in \fZ_{(x,x,y,x)} \quad \text{s.t. } x = y \text{ and } \CF_3/\CF_1 \text{ is split} 
		$$
		Since the locus of such points is contained in $\fZ_{(x,x,x,x)}$, which has dimension 1 less than $\fZ_{(x,x,y,x)}$ by Propositions \ref{prop:xxxx} and \ref{prop:xxyx and xyxx}, the dimension of $\fY_{-+}$ is the same as that of $\fZ_{(x,x,y,x)}$. As for the irreducible components of top dimension in $\fY_{-+}$, we note that one of them is the closure of the locus $x \neq y$. But recall that
		$$
		\fZ_{(x,x,x,x)} = V_1 \cup V_2 
		$$
		where $V_1$ and $V_2$ lie above the irreducible components $Z_1$ and $Z_2$ of $\stack_4$ (see items (1)--(2) in Subsection \ref{sub:stack 4}). The map $\pi^\uparrow$ has inverse image a single point over the generic point of $V_2$. However, over any point of $V_1$ we have $\CF_3/\CF_1$ split, and so there exists a whole $\BP^1$ in $\fY_{-+}$ above points of $V_1$. As $\dim V_1 = \dim \fZ_{(x,x,x,x)} = \dim \fY_{-+} - 1$, this contributes an irreducible component of top dimension to $\fY_{-+}$. 
		
	\end{proof}
	
	\subsection{} 
	\label{sub:tangent}
	
	In the next Subsection, we will prove that the scheme $\fY$ is smooth. To do so, we will explicitly describe the tangent space to a closed point \eqref{eqn:y} of $\fY$ and compute its dimension. Let us recall that the tangent space to the moduli space $\CM$ at a point $\CF \in \text{Coh}(S)$ is given by
	\begin{equation}
	\label{eqn:kodaira spencer}
	\Tan_{\CF} \CM = \text{Ext}^1(\CF,\CF)
	\end{equation}
	Indeed, the functor-of-points description \eqref{eqn:bijection} implies that a tangent vector at $\CF$ $\in \CM$ is a coherent sheaf on $S \times \text{Spec } \BC[\nu]/(\nu^2)$ which is flat over the second factor and restricts to $\CF$ when one sets $\nu = 0$. In other words, a tangent vector is a coherent sheaf $\CG$ on $S$ with an morphism $\nu : \CG \rightarrow \CG$ that squares to 0, such that
	$$
	\CG/\text{Im }\nu \cong \CF
	$$
	The flatness condition on $\CG$ implies that $\text{Tor}^{\BC[\nu]/(\nu^2)}_1(\BC[\nu]/(\nu), \CG) = 0$, and so
	\begin{equation}
	\label{eqn:ses}
	0 \longrightarrow \CG/\text{Im }\nu \xrightarrow{ \cdot \nu} \CG \longrightarrow \CG/\text{Im }\nu \longrightarrow 0
	\end{equation}
	is a short exact sequence, which precisely gives rise to an element of $\Ext^1(\CF,\CF)$. The moduli space $\CM$ is smooth precisely when the dimensions of the tangent spaces \eqref{eqn:kodaira spencer} are locally constant in $\CF$ (for a more rigorous presentation of the smoothness of the moduli space via obstruction theory, we refer the reader to \cite{HL}). We have
	\begin{equation}
	\label{eqn:dim euler}
	\dim \Hom(\CF,\CF) - \dim \Ext^1(\CF,\CF) + \dim \Ext^2(\CF,\CF) =  \chi(\CF,\CF)
	\end{equation}
	Since stable sheaves are simple (see \cite{HL}), we have
	\begin{align}
	\dim \Hom(\CF,\CF) = 1 & \quad \text{because } \Hom(\CF,\CF) \cong \BC \label{eqn:simple} \\
	\dim \Ext^2(\CF,\CF) = \e & \quad \text{because } \Ext^2(\CF,\CF) \cong \Hom(\CF,\CF \otimes \omega_S)^\vee \label{eqn:serre} 
	\end{align}
	where the latter isomorphism is Serre duality, and the number $\e$ is $1$ or $0$ depending on which situation of Assumption S we are in ($\e = 1$ for $\omega_S \cong \CO_S$ and $\e = 0$ for $c_1(\omega_S) \cdot H < 0$). Therefore, we conclude that
	\begin{equation}
	\label{eqn:dim 0}
	\dim \Ext^1(\CF,\CF) = 1+ \e + \gamma + 2rc_2
	\end{equation}
	where $\chi(\CF,\CF) = -\gamma - 2rc_2$ can be computed using the Hirzebruch-Riemann-Roch theorem, and the constant $\gamma$ only depends on $S,H,r,c_1$. \\
	
	\subsection{} 
	\label{sub:y smooth}
	
	Following \cite[relation (2.23)]{Univ}, the tangent space to $\fZ_1$ at a closed point $(\CF_0 \subset_x \CF_1)$ is the vector space of pairs of the form:
	\begin{equation}
	\label{eqn:doubles}
	(w_0,w_1) \in \text{Ker} \left[ \Ext^1(\CF_0,\CF_0) \oplus \Ext^1(\CF_1,\CF_1) \xrightarrow{\psi} \Ext^1(\CF_0,\CF_1) \right]
	\end{equation}
	where the arrow is the difference of the two natural maps induced by the inclusion $\CF_0 \subset \CF_1$. These maps fit into the diagram below with exact rows and columns:
	\begin{equation}
	\label{eqn:ext diagram 1}
	\xymatrix{
		\Ext^1(\CF_1,\CF_0) \ar[d] \ar[r] & \Ext^1(\CF_0, \CF_0) \ar[d] \ar@{->>}[r] & \Ext^2(\BC_x,\CF_0) \ar[d] \\
		\Ext^1(\CF_1, \CF_1) \ar[r] \ar@{->>}[d] & \Ext^1(\CF_0,\CF_1) \ar[r] \ar[d] \ar@{.>}[dr] & \Ext^2(\BC_x, \CF_1) \ar@{->>}[d] \\
		\Ext^1(\CF_1, \BC_x) \ar[r] & \Ext^1(\CF_0,\BC_x) \ar@{->>}[r] & \Ext^2(\BC_x, \BC_x)}
	\end{equation}
	where we write $\CF_1/\CF_0 = \BC_x$ for the skyscraper sheaf at the closed point $x \in S$. The dimensions of the Ext spaces in the diagram above may be computed as in the previous Subsection:
	\begin{align*}
	&\dim \Ext^1(\CF_0, \CF_0) = 1 + \e + \gamma + 2rc_2^\first \\
	&\dim \Ext^1(\CF_1, \CF_1) = 1 + \e + \gamma + 2rc_2^\last \\
	&\dim \Ext^1(\CF_0, \CF_1) = 1 + \gamma + r(c_2^\first + c_2^\last) \\
	&\dim \Ext^1(\CF_1, \CF_0) = \e + \gamma + r(c_2^\first + c_2^\last)
	\end{align*}
	where $c_2^\first$ and $c_2^\last$ are the second Chern classes of the sheaves denoted by $\CF_0$ and $\CF_1$, respectively. The fact that the kernel of $\psi$ in \eqref{eqn:doubles} has the expected dimension $1 + \e + \gamma + r(c_2^\first + c_2^\last) + 1$ then follows from the elementary facts below: \\
	
	\begin{enumerate}
		
		\item the image of $\psi$ coincides with the kernel of the dotted arrow \\
		
		\item the target $\Ext^2(\BC_x, \BC_x)$ of the dotted arrow is 1 dimensional \\
		
		\item the dotted arrow is non-zero if and only if $\e = 0$ \\
		
	\end{enumerate} 
	
	\noindent These facts were proved in \cite{Univ}. As shown in \loccitt, a pair as in \eqref{eqn:doubles} contains precisely the same information as a commutative diagram with exact rows:
	\begin{equation}
	\label{eqn:diagram extensions}
	\xymatrix{0 \ar[r] & \CF_1 \ar[r] & \CG_1 \ar[r] & \CF_1 \ar[r] & 0 \\
		0 \ar[r] & \CF_0 \ar[r] \ar@{^{(}->}[u] & \CG_0 \ar[r] \ar@{^{(}->}[u] & \CF_0 \ar[r] \ar@{^{(}->}[u]& 0}
	\end{equation}
	In this language, the differential of the map $p_S : \fZ_1 \rightarrow S$ is given by
	\begin{equation}
\label{eqn:explicit differential}	\text{diagram \eqref{eqn:diagram extensions}} \quad \xrightarrow{dp_S} \quad
	\Big( 0 \rightarrow \CF_1/\CF_0 \rightarrow \CG_1/\CG_0 \rightarrow \CF_1/\CF_0 \rightarrow 0 \Big) \in \Tan_x S
	\end{equation}
	where we use the fact that $\CF_1/\CF_0 \cong \BC_x$ and the fact that there exists a canonical isomorphism $\Tan_x S = \Ext^1(\BC_x, \BC_x)$. A diagram in the kernel of $dp_{S}$ is one in which the extension $\CG_1/\CG_0$ splits, which precisely means that the diagram \eqref{eqn:diagram extensions} allows one to insert an extra row:
	$$
	\xymatrix{0 \ar[r] & \CF_1 \ar[r] & \CG_1 \ar[r] & \CF_1 \ar[r] & 0 \\
		0 \ar[r] & \CF_0 \ar[r] \ar@{^{(}->}[u] & \CH \ar[r] \ar@{^{(}->}[u] & \CF_1 \ar[r] \ar@{=}[u]& 0 \\
		0 \ar[r] & \CF_0 \ar[r] \ar@{=}[u] & \CG_0 \ar[r] \ar@{^{(}->}[u] & \CF_0 \ar[r] \ar@{^{(}->}[u]& 0}
	$$
	The ability to insert the middle row into the diagram above is equivalent to saying that the pair \eqref{eqn:doubles} comes from one and the same element in the vector space $\Ext^1(\CF_1,\CF_0)$ situated in the top left corner of diagram \eqref{eqn:ext diagram 1}. We conclude that:
	\begin{equation}
	\label{eqn:surjects}
	\Ext^1(\CF_1,\CF_0) \twoheadrightarrow \text{Ker }dp_S
	\end{equation}
	is a surjective map. Since the dimension of $\Ext^1(\CF_1,\CF_0)$ space is 2 less than that of the tangent space to $\fZ_1$, this implies that the map $dp_S$ is surjective. \\
	
	\begin{proof} \emph{of Proposition \ref{prop:y}:} Since we already proved the dimension and irreducibility statements in Proposition \ref{prop:y geom}, it remains to prove that $\fY$ is smooth. By analogy with the discussion above, we showed in \cite[relation (4.37)]{W univ} that the tangent space to $\fZ_2$ at a closed point $(\CF_0 \subset_x \CF_1 \subset_y \CF_2)$ consists of triples of the form
		\begin{multline}
		(w_0,w_1,w_2) \in \text{Ker } \Big[ \Ext^1(\CF_0,\CF_0) \oplus \Ext^1(\CF_1,\CF_1) \oplus \Ext^1(\CF_2,\CF_2) \\
		\longrightarrow \Ext^1(\CF_0,\CF_1) \oplus \Ext^1(\CF_1,\CF_2) \Big] 
		\label{eqn:triples}
		\end{multline}
		where the arrow is the alternating sum of the four natural maps induced by the inclusions $\CF_0 \subset \CF_1 \subset \CF_2$. In \loccitt, we also showed that
		$$
		\dim \text{ space of triples \eqref{eqn:triples}} = 1 + \e + \gamma + r(c_2^\first + c_2^\last) + 2 + \delta_{\CF_2/\CF_0}^{\text{split}}
		$$
		where $c_2^\first$ and $c_2^\last$ are the second Chern classes of the sheaves denoted by $\CF_0$  and $\CF_2$, respectively (meanwhile, the Kronecker $\delta$ symbol is 1 if $x = y$ and $\CF_2/\CF_0$ is split, i.e. $\cong \BC_x^{\oplus 2}$, and 0 otherwise). We conclude that the dimensions of the tangent spaces to $\fZ_2$ jump by 1 precisely on the split locus. The differential of the map
		\begin{equation}
		\label{eqn:projection 2}
		\fZ_2 \xrightarrow{p_S^1 \times p_S^2} S \times S, \qquad \qquad (\CF_0 \subset_x \CF_1 \subset_y \CF_2) \mapsto (x,y)
		\end{equation}
		admits a presentation analogous to \eqref{eqn:explicit differential}. It was shown in \cite{W univ} that the differential $dp_S^1 \times dp_S^2$ is surjective if and only if either $x \neq y$ or $x  = y$ and $\CF_2/\CF_0$ is split. \\
		
		\noindent By combining the discussion above with the moduli functor presentation of the scheme $\fY$, we see that $\Tan_{(\CF_0 \subset \CF_1,\CF_1' \subset \CF_2)} \fY$ is the space of quadruples
		$$
		(w_0,w_1,w_1',w_2) \in \Ext^1(\CF_0,\CF_0) \oplus \Ext^1(\CF_1,\CF_1) \oplus \Ext^1(\CF_1',\CF_1') \oplus \Ext^1(\CF_2,\CF_2)
		$$
		which satisfy the four properties below: \\
		
		\begin{enumerate}[leftmargin=*]
			
			\item $w_0$ and $w_1$ (or $w_1'$) map to the same element of $\Ext^1(\CF_0,\CF_1)$ (or $\Ext^1(\CF_0,\CF_1')$) \\
			
			\item $w_1$ (or $w_1'$) and $w_2$ map to the same element of $\Ext^1(\CF_1,\CF_2)$ (or $\Ext^1(\CF_1',\CF_2)$) \\
			
			\item $dp_S(w_0,w_1) = dp_S(w_1',w_2) \in \Ext^1(\BC_x,\BC_x)$ \\
			
			\item $dp_S(w_0,w_1') = dp_S(w_1,w_2) \in \Ext^1(\BC_y,\BC_y)$ \\
			
		\end{enumerate}
		
		\noindent By analogy with \eqref{eqn:doubles} and \eqref{eqn:ext diagram 1}, consider the vector space
		\begin{equation}
		\label{eqn:doubless}
		A = \text{Ker} \left[ \Ext^1(\CF_0,\CF_0) \oplus \Ext^1(\CF_2,\CF_2) \xrightarrow{\psi'} \Ext^1(\CF_0,\CF_2) \right]
		\end{equation}
		where $\psi'$ is the difference of the two natural maps in the diagram below with exact rows and columns:
		\begin{equation}
		\label{eqn:ext diagram 2}
		\xymatrix{
			\Ext^1(\CF_2,\CF_0) \ar[d] \ar[r] & \Ext^1(\CF_0, \CF_0) \ar[d] \ar@{->>}[r] & \Ext^2(\CQ,\CF_0) \ar[d] \\
			\Ext^1(\CF_2, \CF_2) \ar[r] \ar@{->>}[d] & \Ext^1(\CF_0,\CF_2) \ar[r] \ar[d] \ar@{.>}[dr] & \Ext^2(\CQ, \CF_2) \ar@{->>}[d] \\
			\Ext^1(\CF_2, \CQ) \ar[r] & \Ext^1(\CF_0,\CQ) \ar@{->>}[r] & \Ext^2(\CQ,\CQ)}
		\end{equation}
		(here $\CF_2/\CF_0 = \CQ$ is a length 2 sheaf which is filtered by $\BC_x$ and $\BC_y$). By analogy with our analysis of \eqref{eqn:ext diagram 1}, it is easy to show that the image of the map $\psi'$ in \eqref{eqn:doubless} coincides with the kernel of the dotted arrow, and so
		\begin{equation}
		\label{eqn:dim a}
		\dim A = 1 + \e + \gamma + r(c_2^\first + c_2^\last) + \dim \Ext^2(\CQ,\CQ)
		\end{equation}
		The dimension of $\Ext^2(\CQ,\CQ)$ is 4 if $x = y$ and $\CQ$ is split, and 2 otherwise. Items (1)--(4) above imply that we have a Cartesian diagram of vector spaces:
		\begin{equation}
		\label{eqn:square y}
		\xymatrixcolsep{4pc} \xymatrix{\Tan_{(\CF_0 \subset \CF_1,\CF_1' \subset \CF_2)} \fY \ar[r]^-{a'} \ar[d]_-{b'} & \Tan_{(\CF_0 \subset_y \CF_1' \subset_x \CF_2)} \fZ_2 \ar[d]^-{(b,dp_S^2, dp_S^1)} \\
			\Tan_{(\CF_0 \subset_x \CF_1 \subset_y \CF_2)} \fZ_2 \ar[r]^-{(a,dp_S^1, dp_S^2)} & A \oplus \Tan_x S \oplus \Tan_y S}
		\end{equation}
		where the maps $a, a'$ forget $w_1$ and the maps $b,b'$ forget $w_1'$. \\
		
		\begin{claim} 
			\label{claim:smooth}
			
			The map $a$ is injective, unless $x = y$ in which case $\emph{Ker }a$ is one-dimensional and spanned by $(0,w_1,0)$, where $w_1$ represents the following extension:
			\begin{equation}
			\label{eqn:ext smooth}
			0 \longrightarrow \CF_1 \xrightarrow{(\text{inclusion},0)} \CF_2 \oplus_{\BC_x} \CF_1 \xrightarrow{(0,\text{projection})} \CF_1 \longrightarrow 0
			\end{equation}
			(the middle space requires fixing isomorphisms $\CF_2/\CF_1 \cong \CF_1/\CF_0 \cong \BC_x$). The image of the entension \eqref{eqn:ext smooth} under $dp_S^1 \times dp_S^2$ is equal to $(v,v) \in \eTan_x S \oplus \eTan_x S$, where $v \in \emph{Ext}^1(\BC_x,\BC_x)$ is the class of the extension $0 \rightarrow \CF_1/\CF_0 \rightarrow \CF_2/\CF_0 \rightarrow \CF_2/\CF_1 \rightarrow 0$. \\
			
		\end{claim}
		
		\noindent We will first show how Claim \ref{claim:smooth} allows us to prove that all tangent spaces to $\fY$ have dimension $\leq$ than:
\begin{equation}
\label{eqn:expected number}
\dim \fY \stackrel{\text{Prop. \ref{prop:y geom}}}=	\dim \fZ_2 \stackrel{\text{Prop. \ref{prop:xy}}}= 1 + \e + \gamma + r(c_2^\first + c_2^\last) + 2 
\end{equation}
(which would conclude the proof of Proposition \ref{prop:y}) and then prove the Claim. \\

\noindent \emph{\textbf{Case 1:}} when $x \neq y$ or $x = y$ and $\CF_2/\CF_0$ is not split (i.e. we are at a smooth point of $\fZ_2$), Claim \ref{claim:smooth} implies that $(a,dp_S^1,dp_S^2)$ is injective. Then the fact that diagram \eqref{eqn:square y} is Cartesian implies that the map $a'$ is also injective, which implies
$$
\dim \Tan_{(\CF_0 \subset \CF_1,\CF_1' \subset \CF_2)} \fY \leq \dim \Tan_{(\CF_0 \subset_y \CF_1' \subset_x \CF_2)} \fZ_2	
$$	
Because of Proposition \ref{prop:y geom} and the fact that we are on the smooth locus of $\fZ_2$, the dimensions of the two tangent spaces must be equal. \\
					
\noindent \emph{\textbf{Case 2:}} when $x = y$ and $\CF_2/\CF_0$ is split, the dimensions of the vector spaces in \eqref{eqn:square y} are
$$
		\xymatrixcolsep{4pc} \xymatrix{? \ar[r]^-{a'} \ar[d]_-{b'} & \BC^{d+1} \ar[d]^-{(b,dp_S^2, dp_S^1)} \\
			\BC^{d+1} \ar[r]^-{(a,dp_S^1, dp_S^2)} & \BC^{d+2} \oplus \BC^2 \oplus \BC^2}
$$
where $d$ is the number in the right-hand side of \eqref{eqn:expected number}. The goal is to show that the Cartesian product of the diagram, namely the vector space $?$, has dimension $d$. If $\CF_1 = \CF_1'$, then the northeast and southwest corners are naturally identified, as are the maps $a$ and $b$. By Claim \ref{claim:smooth}, we can decompose $\BC^{d+1} = \BC \oplus \BC^d$, where $a(\BC) = 0$ and $a|_{\BC^d}$ is injective. Moreover, $dp_S^1(\BC) = dp_S^2(\BC) = 0$, which implies that $dp_S^1 \times dp_S^2|_{\BC^d}$ is surjective (this follows from the surjectivity of $dp_S^1 \times dp_S^2$, proved in \cite[relation (4.47)]{W univ}). We conclude that a point of $?$ is of the form
$$
(l,l',v) \in \BC \oplus \BC \oplus \BC^d
$$
such that $dp_S^1(v) = dp_S^2(v)$. The latter equality imposes two non-trivial linear conditions on $v$, so we conclude that the dimension of $?$ is $1+1+d-2 = d$. \\

\noindent If $\CF_1 \neq \CF_1'$, then it suffices to prove that $\text{Im } a$ and $\text{Im } b$ are transversal $d$-dimensional subspaces of $A$. Consider a point of $A$ given by a pair of extensions
			\begin{equation}
			\label{eqn:diagram extension 2}
			\xymatrix{0 \ar[r] & \CF_2 \ar[r] & \CG_2 \ar[r] & \CF_2 \ar[r] & 0 \\
				0 \ar[r] & \CF_0 \ar[r] \ar@{^{(}->}[u] & \CG_0 \ar[r] \ar@{^{(}->}[u] & \CF_0 \ar[r] \ar@{^{(}->}[u]& 0}
			\end{equation}
			The diagram above induces an extension at the level of quotients
			\begin{equation}
			\label{eqn:white}
			0 \rightarrow \CQ \rightarrow \CH \rightarrow \CQ \rightarrow 0
			\end{equation}
			where $\CQ = \CF_2/\CF_0$ and $\CH = \CG_2/\CG_0$. The pair of extensions \eqref{eqn:diagram extension 2} lies in $\text{Im }a$ iff
			\begin{equation}
			\label{eqn:condition}
			\CH \text{ has a length 2 subscheme compatible with } \CF_1/\CF_0 \subset \CQ
			\end{equation}
			(and similarly for $\text{Im }b$, if we replace $\CF_1$ by $\CF_1'$). Fix a vector space isomorphism:
			$$
			\CF_2/\CF_0 \cong \BC^2
			$$ 
			with respect to which $\CF_1/\CF_0$ is the first standard coordinate line and $\CF_1'/\CF_0$ is the second coordinate line. The rank 4 coherent sheaf $\CH$ is determined on the local neighborhood of $x \in S$ by two commuting $4 \times 4$ matrices $X$ and $Y$, whose only non-zero entries are allowed to be in the bottom left $2 \times 2$ block, as below:
			$$
			X = \begin{pmatrix}
			0 & 0 & 0 & 0 \\
			0 & 0 & 0 & 0 \\
			x_1 & x_2 & 0 & 0 \\
			x_3 & x_4 & 0 & 0 
			\end{pmatrix} \qquad \text{and} \qquad Y = \begin{pmatrix}
			0 & 0 & 0 & 0 \\
			0 & 0 & 0 & 0 \\
			y_1 & y_2 & 0 & 0 \\
			y_3 & y_4 & 0 & 0 
			\end{pmatrix}
			$$
			Condition \eqref{eqn:condition} is equivalent to $x_3 = y_3 = 0$, while the analogous condition with $\CF_1$ replaced by $\CF_1'$ is equivalent to $x_2 = y_2 = 0$. Taken together, this would prove the desired fact that the images of $a$ and $b$ are transverse codimension two subspaces of $A$, as soon as we prove that the map
			$$
			A \xrightarrow{\alpha} \Ext^1(\CQ,\CQ) \qquad \eqref{eqn:diagram extension 2} \mapsto \eqref{eqn:white} 
			$$
			is surjective. To this end, note that the dimension of $A$ is given by \eqref{eqn:dim a}, $\dim \Ext^2(\CQ,\CQ) = 4$, $\dim \Ext^1(\CQ,\CQ) = 8$. Therefore, it suffices to show that
			$$
			\dim \text{Ker }\alpha \leq 1+\e+\gamma + r(c_2^\first + c_2^\last) - 4
			$$
			By analogy with \eqref{eqn:surjects}, we have a surjective map
			$$
			\Ext^1(\CF_2,\CF_0) \stackrel{\beta}\twoheadrightarrow \text{Ker }\alpha
			$$
			A simple application of Hirzebruch-Riemann-Roch shows that $\dim \Ext^1(\CF_2,\CF_0)$ $ = \e+\gamma + r(c_2^\first + c_2^\last)$. Therefore, it suffices to show that the kernel of the map $\beta$ has dimension $\geq 3$. To this end, consider the following piece of the Ext long exact sequence corresponding to the short exact sequence $0 \rightarrow \CF_0 \rightarrow \CF_2 \rightarrow \CQ \rightarrow 0$:
			$$
			\xymatrix{\Hom(\CF_2,\CF_2) \ar[r] & \Hom(\CF_2,\CQ) \ar[r]^{\rho} \ar[d] & \Ext^1(\CF_2,\CF_0) \ar[r] \ar[d] & \Ext^1(\CF_2,\CF_2) \\
				& \Hom(\CF_0,\CQ) \ar[r] & \Ext^1(\CF_0,\CF_0)}
			$$
			Consider the 4-dimensional subspace $V \subset \Hom(\CF_2,\CQ)$ which consists of a fixed homomorphism with kernel $\CF_0$, composed with an arbitrary endomorphism of $\CQ\cong \BC_x^2$. Any element of $V$ maps to 0 in both $\Ext^1(\CF_0,\CF_0)$ and $\Ext^1(\CF_2,\CF_2)$, so $\rho(V)$ lies inside the kernel of $\beta$. However, the map $\rho$ has a 1-dimensional kernel since $\Hom(\CF_2,\CF_2) = \BC$, so we conclude that $\text{Ker } \beta$ has dimension at least 3. \\

		\begin{proof} \emph{of Claim \ref{claim:smooth}:} It is enough to show that any triple $(0,w_1,0)$ satisfying
			\begin{align*}
			&0 \text{ and }w_1 \text{ map to the same element in }\Ext^1(\CF_0,\CF_1) \\
			&w_1 \text{ and }0 \text{ map to the same element in }\Ext^1(\CF_1,\CF_2)
			\end{align*}
			must have $w_1$ equal to a multiple of the extension \eqref{eqn:ext smooth}. Indeed, the natural long exact sequences imply that it suffices to show that any $w_1 \in \Ext^1(\CF_1,\CF_1)$ which lies in the intersection of the images of $\Ext^1(\BC_x,\CF_1)$ and $\Hom(\CF_1,\BC_y)$ is a multiple of \eqref{eqn:ext smooth}, where $\BC_x = \CF_1/\CF_0$ and $\BC_y = \CF_2/\CF_1$. In other words, if we have a diagram
			\begin{equation}
			\label{eqn:temp diag}			
			\xymatrix{0 \ar[r] & \CF_1 \ar[r] & \CF_2 \ar[r]^-{\text{taut}} & \BC_y \ar[r] & 0 \\
				0 \ar[r] & \CF_1 \ar[r] \ar@{=}[u] \ar@{=}[d] & \CG \ar[r] \ar[u] \ar[d] & \CF_1 \ar[r] \ar[u]^-\alpha \ar[d]^-{\text{taut}} & 0 \\
				0 \ar[r] & \CF_1 \ar[r] & \CH \ar[r]^-\beta & \BC_x \ar[r] & 0 
			}
			\end{equation}
		(the maps denoted ``taut" are the projection maps $\CF_1 \twoheadrightarrow \BC_x$ and $\CF_2 \twoheadrightarrow \BC_y$ that give rise to the flag $\CF_0 \subset \CF_1 \subset \CF_2$) where the middle short exact sequence is the pull-back of both the the top and the bottom short exact sequences, we must show that the middle short exact sequence is a multiple of \eqref{eqn:ext smooth}. As the sheaves $\CF_0,\CF_1,\CF_2, \CH$ all have the same reflexive hull, and since the reflexive hull is stable, we may regard $\CF_0,\CF_1,\CF_2, \CH$ as subsheaves of the same stable vector bundle $\CV$.  \\

\begin{enumerate}[leftmargin=*]

\item If $\CH \neq \CF_2$ (as subsheaves of $\CV$), then the two sides of the inclusion
$$
\CF_1 \subseteq \CH \cap \CF_2
$$
have the same colength as subsheaves of $\CV$, so the inclusion above is an equality. Similarly, the two sides of the inclusion:
$$
\CG \subseteq (\CH \cap \CF_2) \oplus \CF_1 = \CF_1 \oplus \CF_1
$$
have the same colength as subsheaves of $\CV \oplus \CV$, so the inclusion above is an equality. But then the short exact sequence $0 \rightarrow \CF_1 \rightarrow \CG \rightarrow \CF_1 \rightarrow 0$ is split, so $w_1 = 0$. \\

\item if $\CH = \CF_2$ (as subsheaves of $\CV$), then $x = y$. Since $\text{Hom}(\CF_1, \CF_2)$ is one dimensional, any two injections $\CF_1 \hookrightarrow \CF_2$ are scalar multiples of each other, which implies that $\beta = \lambda \cdot \taut$ for some $\lambda \in \BC^\times$ (notations as in \eqref{eqn:temp diag}). Then the extension $\CG$ is equal to $\lambda$ times the extension \eqref{eqn:ext smooth}, as we needed to show. 
			
			\end{enumerate}

		\end{proof}
		
	\end{proof}
	
	\subsection{}
	
	Let us now consider the schemes $\fY_-$, $\fY_+$, $\fY_{-+}$ in relation to $\fY$. \\	
	
	\begin{proof} \emph{of Propositions \ref{prop:y- and y+} and \ref{prop:y-+}:} Since we already proved the dimension and irreducibility statements in Proposition \ref{prop:y geom}, it remains to prove that $\fY_-$, $\fY_+$ and $\fY_{-+}$ are l.c.i. This is a consequence of the claim that the following squares are all derived, which we will now prove:
\begin{equation}
\label{eqn:derived y 1}
		\xymatrix{\fY_+ \ar[r] \ar[d] & \fZ_{(x,x)} \ar[d]  \\
			\fY \ar[r] & \fZ_{(x)}} \qquad \qquad \xymatrix{\fY_- \ar[r] \ar[d] & \fZ_{(x,x)} \ar[d]  \\
			\fY \ar[r] & \fZ_{(x)}} 
\end{equation}
\begin{equation}
\label{eqn:derived y 2}
		\xymatrix{\fY_{-+} \ar[r] \ar[d] & \fZ_{(x,x)} \ar[d] \\
			\fY_- \ar[r] & \fZ_{(x)}} \qquad \qquad \xymatrix{\fY_{-+} \ar[r] \ar[d] & \fZ_{(x,x)} \ar[d] \\
			\fY_+ \ar[r] & \fZ_{(x)}}
\end{equation}
		In all cases above, the arrow on the left is the only map one can write which forgets a single sheaf (in the notation of \eqref{eqn:y}--\eqref{eqn:y bullet}), while the arrow on the right is the unique map which forgets the same sheaf as the arrow on the left. \\

\noindent We will only prove the fact that the first square in \eqref{eqn:derived y 1} is derived, since all other cases are analogous. Consider the map on the left of the square
		\begin{equation}
		\label{eqn:kiyomizu}
		\fY_+ \longrightarrow \fY
		\end{equation}
		With the notation in \eqref{eqn:y} and \eqref{eqn:y plus}, we note that the fibers of this map consist of all ways to append a sheaf $\CF_{-1} \subset_x \CF_0$ to diagram \eqref{eqn:y}. Just like in Proposition \ref{prop:tower}, one sees that the map \eqref{eqn:kiyomizu} factors as
		$$
		\xymatrix{\fY_+ \ar[rd] \ar@{^{(}->}[r]^-\iota & \BP_{\fY} (\Gamma^{x*} (\CV_0)) \ar[d] \\
			& \fY}
		$$
		where $\Gamma^x : \fY \rightarrow \fY \times S$ is the graph of the map $p_S^x$ that records the support point $x \in S$. The closed embedding $\iota$ is cut out by the composition
		$$
		\sigma : \Gamma^{x*}(\CW_0) \longrightarrow \Gamma^{x*} (\CV_0) \longrightarrow \CO(1)
		$$
		and just like in Proposition \ref{prop:tor}, one may show that the section $\sigma$ factors through a locally free sheaf of rank 1 less, as follows:
		$$
		\sigma : \Gamma^{x*}(\CW_0) \twoheadrightarrow \frac {\Gamma^{x*}(\CW_0)}{\CL_1 \otimes p_S^{x*}(\omega_S)} \xrightarrow{\sigma'} \CO(1)
		$$
		(the argument requires the fact that $p_S^x$ is flat, which is proved by estimating the dimensions of its fibers, akin to the proof of Proposition \ref{prop:flat morphism}). Because of \eqref{eqn:inequality}, we obtain:
		$$
		\dim \fY_+ - \dim \fY \geq r
		$$
		However, Proposition \ref{prop:y} implies that $\fY$ is smooth, while Proposition \ref{prop:y geom} implies that $\dim \fY_+ = \dim \fY + r$. Therefore, we actually have equality in the inequality above. This is a particular case of Definition \ref{def:regular}, hence the section $\sigma'$ is regular. However, this is precisely the same section that describes the map $\fZ_{(x,x)} \rightarrow \fZ_{(x)}$. By Definition \ref{def:lci}, this precisely says that the first fiber square in \eqref{eqn:derived y 1} is derived.
		
	\end{proof}

	\begin{proof} \emph{of Proposition \ref{prop:reduced}:} The scheme $\fY$ is reduced because it is smooth. As for the other schemes, they are local complete intersections, so it suffices to prove that their generic points are reduced. In the case of $\fY_-$ and $\fY_+$, they are irreducible, and the generic point corresponds to a diagram \eqref{eqn:y minus}--\eqref{eqn:y plus} with $x \neq y$. Near such a point, $\fY_-$ and $\fY_+$ are isomorphic to $\fZ_{(y,x,x)}$ and $\fZ_{(x,x,y)}$, respectively. Since the latter schemes are normal (due to Proposition \ref{prop:normal}), reducedness follows. \\
		
		\noindent The same argument applies to the irreducible component of $\fY_{-+}$ which is the closure of the locus of diagrams \eqref{eqn:y bullet} with $x \neq y$. As for the other irreducible component, we recall that it corresponds to diagrams \eqref{eqn:y bullet} with $x = y$ and $\CF_3/\CF_1$ a split length 2 sheaf. Therefore, the second component is locally isomorphic to
		$$
		V_1 \times \BP^1
		$$
		where $V_1 \subset \fZ_{(x,x,x,x)}$ is the irreducible component consisting of
		$$
		(\CF_0 \subset_x \CF_1 \subset_x\CF_2 \subset_x\CF_3 \subset_x\CF_4)
		$$
		such that $\CF_3/\CF_1$ is a split length 2 sheaf. It suffices to show that $V_1$ is generically reduced. As a consequence of Corollary \ref{cor:def 2}, the generic point of $V_1$ corresponds to $\CF_4$ locally free. Lemma \ref{lem:locally isomorphic} implies that near such a point, $V_1$ is locally isomorphic to the smooth moduli space $\CM_4 = \{ \CF_4 \}$ times the component $Z_1/B \subset \comm_4/B$ that we studied in Subsection \ref{sub:stack 4}. As we noted therein, $Z_1$ is generically reduced, so we are done. \\
		
	\end{proof}

\end{document}